\documentclass[letterpaper,11pt]{amsart}
\usepackage{tikz, tikz-cd, stmaryrd}
\usetikzlibrary{arrows}
\usepackage{subcaption} 
\usepackage[font=footnotesize,labelfont=bf]{caption}
\usepackage{blkarray}
\usepackage{enumitem}
\textwidth=16.00cm 
\textheight=22.00cm 
\topmargin=0.00cm
\oddsidemargin=0.00cm 
\evensidemargin=0.00cm 
\headheight=0cm 
\headsep=0.5cm
\textheight=630pt
\usepackage{latexsym,array,delarray,epsfig,setspace,cleveref, mathtools,amssymb,mathrsfs,bbold}

%%%%%%%%%%%%%%%%%%%%   Hyperlinks   %%%%%%%%%%%%%%%%%%%%

%%%%%%%%%%%%%%%%%%%%   Theorem-like environments, invoked with   %%%%%%%%%%%%%%%%%%%%
%%%%%%%%%%%%%%%%%%%%   \begin{theorem} ... \end{theorem}         %%%%%%%%%%%%%%%%%%%%
%%%%%%%%%%%%%%%%%%%%   and numbered automatically by LaTeX       %%%%%%%%%%%%%%%%%%%%

\newtheorem{theorem}{Theorem}
\numberwithin{theorem}{section}
\newtheorem{proposition}[theorem]{Proposition}
\newtheorem{corollary}[theorem]{Corollary}
\newtheorem{lemma}[theorem]{Lemma}

\newtheorem{question}[theorem]{Question}
\theoremstyle{definition}
\newtheorem{definition}[theorem]{Definition}
\newtheorem{remark}[theorem]{Remark}
\newtheorem{example}[theorem]{Example}

%%%%%%%%%%%%%%%%%%%%   Fancy letters    %%%%%%%%%%%%%%%%%%%%

%% Sets of numbers
\newcommand{\Cc}{{\mathbb C}}
\newcommand{\Nn}{{\mathbb N}}
\newcommand{\Qq}{{\mathbb Q}}
\newcommand{\Rr}{{\mathbb R}}
\newcommand{\Zz}{{\mathbb Z}}
\newcommand{\Pp}{{\mathbb P}}
\newcommand{\Aa}{{\mathbb A}}
\newcommand{\Kk}{{\mathbb K}}

\renewcommand{\AA}{\mathcal{A}}

\newcommand{\OO}{\mathcal{O}}

%%%%%%%%%%%%%%%%%%%%   Math symbols    %%%%%%%%%%%%%%%%%%%%

		%% "defined to be equal to"
\newcommand{\isom}{\cong}				%% "isomorphic to"
				%% backslash for difference of two sets	
\newcommand{\st}{~|~}					%% vertical bar for "such that"
\newcommand{\D}{\mathfrak{D}}

%%%%%%%%%%%%%%%%%%%%   Math operators    %%%%%%%%%%%%%%%%%%%%

\DeclareMathOperator{\Spec}{Spec}           
\DeclareMathOperator{\Trop}{Trop}
\DeclareMathOperator{\Hom}{Hom}

\setcounter{tocdepth}{1}

\title{The well-poised property and torus quotients}
\author{Joseph Cummings and Christopher Manon}

\begin{document}

\maketitle

\begin{abstract}
    An embedded variety is said to be well-poised when the associated initial ideal degenerations coming from points of the tropical variety are reduced and irreducible.  Varieties with a well-poised embedding admit a large collection of explicitly constructible Newton-Okounkov bodies. 
    This paper aims to study the well-poised property under torus quotients. Our first result states that GIT quotients of normal well-poised varieties by quasi-tori also have well-poised embeddings. As an application, we show that several Hassett spaces, $\overline{M}_{0,\beta}$, are well-poised under Alexeev's embedding (see \cite{Alexeev}). Conversely, given an affine $T$-variety $X$ with polyhedral divisor $\D$ on a well-poised base $Y$, we construct an embedding of $X \subseteq \Aa^N$ and provide conditions on $Y$ and $\D$ which if met, imply $X$ is well-poised under this embedding. Then we show that any affine arrangement variety meets the specified criteria, generalizing results of Ilten and the second author for rational complexity 1 varieties. Using this result, we explicitly compute many Newton-Okounkov cones of $X$ and provide a criterion for the associated toric degenerations to be normal. Our final application combines these two results to show that hypertoric varieties have well-poised embeddings.
\end{abstract}

\tableofcontents

%%%%%%%%%%%%%%%%%%%%%%%%%%
%       INTRO
%%%%%%%%%%%%%%%%%%%%%%%%%%
\section{Introduction}

In this paper, we are interested in varieties whose tropicalizations are particularly well-behaved. We let $X = V(I) \subseteq \Aa^n$ be an affine irreducible variety over $\Cc$ which does not lie in any coordinate hyperplane. Recall that every point $w \in \Rr^n$ defines an \emph{initial ideal} $\mathrm{in}_w(I) \subseteq \Cc[x_1, \ldots, x_n]$ (see \cite{Maclagan-Sturmfels}, \cite{Cox-Little-O'Shea}, \cite{GBCP}). Following \cite{Maclagan-Sturmfels}, the \emph{tropical variety} $\Trop(I) \subseteq \Rr^n$ is defined to be the set of those $w$ for which $\mathrm{in}_w(I)$ contains no monomials. $\Trop(I)$ can be given the structure of a finite rational polyhedral fan so that if $w_1,w_2$ lie in the relative interior of a cone $C$ in $\Trop(I)$, their initial ideals coincide. We set $\mathrm{in}_C(I)$ to be $\mathrm{in}_w(I)$ for any $w$ in the relative interior of $C$.

\begin{definition}
Let $X = V(I) \subseteq \Aa^n$ be as above, and let $C$ be a cone in $\Trop(I)$. Then $C$ is a \textit{prime cone} if $\mathrm{in}_C(I)$ is a prime ideal. We say that $X$ is \textit{well-poised} if every cone is a prime cone. 
\end{definition}

The term well-poised originates in the paper \cite{Ilten-Manon} by Ilten and the second author, where it was shown that every normal, rational affine variety $X$ equipped with an effective action by an algebraic torus $T$ of codimension $1$ has a well-poised embedding, see \cite[Theorem 1.2]{Ilten-Manon}. The embedding considered in \cite{Ilten-Manon} is known as the \emph{semi-canonical} embedding, and has properties comparable to that of the embedding of an affine toric variety by a minimal Hilbert basis of its coordinate ring.  The main construction of \cite{Ilten-Manon} allows for an explicit computation of any Newton-Okounkov cone or body of $X$ corresponding to a $T$-homogeneous full rank valuation. In this way, the results in \cite{Ilten-Manon} provide a generalization of the combinatorial equipment available for toric varieties.

More generally, the tropical variety of a well-poised variety encodes the data of many Newton-Okounkov cones \cite{Kaveh-Manon-NOK}, as well as piecewise-linear mutations between these cones \cite{Escobar-Harada}. A well-poised variety's tropical variety can also be shown to carry a continuous section of the tropicalization map from the Berkovich analytification (see \cite[\S 10]{Gubler-Rabinoff-Werner}). We will explore this construction in the case of arrangement varieties in future work.

Along these lines, we study the interaction of the well-poised property with the operation of taking a quotient by an algebraic torus. Unless stated otherwise we work over the field of complex numbers $\Cc$. Recall that a \emph{quasi-torus} is an algebrac group isomorphic to the product of a torus and a finite abelian group.  For an affine variety $X$ with coordinate ring $R_X$ equipped with an action by a quasi-torus $H$ with character $\alpha: H \to \Cc^\times$ we let $(R_X)_\alpha \subset R_X$ denote the character space of $\alpha$.  The following is proved in Section \ref{sec-quotients}.

\begin{theorem}\label{thm-main2}
Let $X$ be a well-poised normal variety equipped with an action by a quasi-torus $H$ with no non-constant $H$-invariant regular functions, and assume that $\dim_\Cc ((R_X)_\alpha) < \infty$ for all $\alpha$. Then the geometric invariant theory quotient $X \sslash_\alpha H$ has a well-poised embedding for any character $\alpha$.
\end{theorem}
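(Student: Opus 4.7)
The plan is to pull the well-poised property of $X$ down to the GIT quotient by writing down a natural embedding of the affine cone $\hat{Z} := \Spec(S_\alpha)$, where $S_\alpha = \bigoplus_{k \geq 0}(R_X)_{k\alpha}$, and identifying its initial ideals with initial ideals of $X$ restricted to semi-invariant subrings.

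First I would pass to an $H$-equivariant refinement of the given well-poised embedding $X \subseteq \Aa^n$. The hypothesis $\dim_\Cc((R_X)_\alpha) < \infty$ lets me decompose each coordinate function into its finitely many $H$-weight components; these still generate $R_X$ as a $\Cc$-algebra, and a short lemma (worth proving in its own right) shows that this refinement preserves the well-poised property because the induced change of coordinate rings respects the $H$-grading and decomposes initial ideals cleanly. After this reduction I may assume $H$ acts diagonally on $\Aa^n$ with characters $\beta_1, \ldots, \beta_n$, so that the defining ideal $I$ is $H$-homogeneous. Since the $H$-weight spaces of $\Cc[x_1, \ldots, x_n]$ are spanned by monomials, $S_\alpha$ is generated by monomials $x^{a_1}, \ldots, x^{a_N}$ whose weights are nonnegative multiples of $\alpha$. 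Let $J \subseteq \Cc[y_1, \ldots, y_N]$ be the kernel of the surjection $y_j \mapsto x^{a_j}$; I claim $\hat{Z} = V(J) \subseteq \Aa^N$ is the desired well-poised embedding.

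For $w' \in \Trop(J)$, I need $\mathrm{in}_{w'}(J)$ prime. The plan is to lift $w'$ to a weight $w \in \Rr^n$ with $w \cdot a_j = w'_j$ for all $j$, argue $w \in \Trop(I)$, and prove the key identification: $\mathrm{in}_{w'}(J)$ equals the kernel of the induced presentation of the $\alpha$-stable subring of $\Cc[x]/\mathrm{in}_w(I)$ by the classes of $x^{a_1}, \ldots, x^{a_N}$. Granted this, initial forms of $H$-homogeneous polynomials are $H$-homogeneous of the same $H$-weight, so $\mathrm{in}_w(I)$ is $H$-homogeneous and the $H$-action descends to $\Cc[x]/\mathrm{in}_w(I)$, which is an integral domain by well-poisedness of $X$. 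The $\alpha$-stable subring of a domain is again a domain, so the presentation ideal is prime and the theorem follows.

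The heart of the proof is the identification of $\mathrm{in}_{w'}(J)$ with the presentation ideal of the semi-invariant subring of the degeneration; this is a Khovanskii/SAGBI-style compatibility that does not hold for arbitrary subalgebras. I would establish it via a flatness argument: the Rees algebra interpolating $R_X$ with $\Cc[x]/\mathrm{in}_w(I)$ carries a compatible $H$-action (since the degeneration is $H$-equivariant), and extracting its $\alpha$-stable subalgebra produces a $\Cc[t]$-algebra whose generic fiber presents $S_\alpha$ and whose special fiber presents the sought subring of the degeneration. Flatness of this weight-graded Rees construction — which one verifies by checking that the Hilbert function on each $H$-weight space is preserved, a consequence of $\mathrm{in}_w$ being a Gröbner degeneration of an $H$-graded ideal — then promotes the equality of presentation ideals to the level of initial ideals. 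I expect the finite-dimensionality assumption on character spaces and the normality of $X$ to enter precisely here, both to guarantee finite generation of $S_\alpha$ and to make the weight-wise Hilbert function argument go through.
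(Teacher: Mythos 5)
Your overall strategy is the same as the paper's: present $S_\alpha$ by finitely many monomials in the original coordinates, lift a point $w'$ of $\Trop(J)$ to a point $w$ of $\Trop(I)$ via the tropicalized monomial map, and identify $\mathrm{in}_{w'}(J)$ with the presentation ideal of the $\alpha$-graded subring of the domain $\Cc[x_1,\dotsc,x_n]/\mathrm{in}_w(I)$, which is prime because a subring of a domain is a domain. Two points in your execution need scrutiny. First, your opening reduction --- replacing each $x_i$ by its $H$-weight components and invoking ``a short lemma'' that this preserves well-poisedness --- cannot be taken for granted: changing the generating set changes $I$ and $\Trop(I)$, and in general destroys primeness of initial ideals. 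No such lemma appears in the paper, which instead tacitly assumes the given well-poised presentation is already $H$-homogeneous (this is how the hypothesis that $R=\Cc[x_1,\dotsc,x_n]/I$ is $M$-graded is used). Either add that assumption or prove the lemma; as written this is a genuine gap. Second, ``lift $w'$ to $w$ with $w\cdot a_j = w'_j$ and argue $w\in\Trop(I)$'' cannot mean an arbitrary solution of that linear system; what you need is the Sturmfels--Tevelev/Maclagan--Sturmfels surjectivity of tropicalized monomial maps, which is exactly the citation the paper's proof rests on.

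Where you genuinely diverge is in certifying the key identification of $\mathrm{in}_{w'}(J)$ with the presentation ideal of the degeneration's $\alpha$-stable subring. The paper arranges for the chosen monomials to be a Khovanskii basis of $S_\alpha$ for every weight valuation simultaneously: it replaces $\alpha$ by a suitable multiple $N\alpha$ so that each $\mathrm{gr}_{\mathfrak{v}_u}(S_{N\alpha})$ is generated in degree one, then quotes the Kaveh--Manon machinery. Your Rees-algebra/flatness argument with weight-wise Hilbert functions is a plausible substitute, but the Hilbert-function count alone does not give you the one fact you actually need: that the classes of the $x^{a_j}$ generate the $\alpha$-graded part of $\Cc[x_1,\dotsc,x_n]/\mathrm{in}_w(I)$ as an algebra. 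This does hold if you choose the $a_j$ to be a semigroup (Hilbert) basis of $\{a\in\Zz_{\geq 0}^n : \mathrm{wt}(a)\in\Zz_{\geq 0}\alpha\}$, since then every monomial of weight $k\alpha$ in the $\bar{x}_i$ (which generate the degeneration by well-poisedness) factors through the $\bar{x}^{a_j}$; with that choice your route bypasses the paper's Veronese twist entirely. But you must make that choice and that deduction explicit: ``the $x^{a_j}$ generate $S_\alpha$'' does not by itself imply ``the $\bar{x}^{a_j}$ generate the degeneration,'' which is precisely the Khovanskii-basis subtlety you acknowledge but do not close.
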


As a first application of Theorem \ref{thm-main2}, we argue that the well-poised property is preserved after applying either a Veronese embedding to a well-poised projective variety $X$ or taking the Segre product of two well-poised projective varieties $X$ and $Y$. We also apply Theorem \ref{thm-main2} to certain Hassett spaces (see \cite{Alexeev}, \cite{Hassett}), and show that they have well-poised embeddings. Hassett spaces are generalizations of $\overline{M}_{0,n}$, so it is natural to ask if $\overline{M}_{0,n}$ has a well-poised embedding. In \cite[Theorem 1.2]{Maclagan-Gibney}, Gibney and Maclagan present $\overline{M}_{0,n}$ as a GIT quotient of an affine variety $X$ by a torus $T$, and they give equations up to saturation that cut out $X$. Thus, in order to show that this presentation of $\overline{M}_{0,n}$ is well-poised it would suffice to show it for $X$.

In Section \ref{sec-wpTvars}, we study the well-poised property in a setting inverse to Theorem \ref{thm-main2}. Instead of considering quotients of a well-poised variety by a quasi-torus, we ask when a normal $T$-variety $X$ is well-poised provided that its base $Y$ is well-poised. 

\begin{definition}
A \textit{$T$-variety} $X$ is a variety equipped with an action by the torus, $T \isom (\Cc^\times)^n$. The \textit{complexity} of $X$, $c(X) = \dim(X) - \dim(T)$, is the codimension of the largest $T$-orbit. The \textit{base} of $X$ is the normalized Chow quotient $X \sslash T$ which will often be denoted by $Y$.
\end{definition}

In general, $X$ can be recovered with a \textit{polyhedral divisor} $\D$ on $Y$ which is of the form
\[
    \sum_{i=0}^m \Delta_i \otimes D_i,
\]
where each $D_i$ is an effective Cartier divisor on $Y$ and the polyhedra $\{\Delta_i\}$ satisfy certain positivity conditions. Polyhedral divisors are the technical tools for $T$-varieties that play the same role rational polyhedral cones play for toric varieties. Section \ref{sec-TVarsPrelims} provides a brief introduction to the theory of polyhedral divisors. For a full treatment, we refer the reader to \cite{Altman-Hausen}. Our main technical result in Section \ref{sec-wpTvars} provides conditions for when a $T$-variety built on a well-poised base has a well-poised embedding. The embeddings considered are referred to as \textit{semi-canonical}, and they mirror the construction from \cite{Ilten-Manon} for rational complexity 1 $T$-varieties.

\begin{theorem}\label{thm-technical}
Let \(\varphi : Y \to \Pp^m\) be a projectively normal embedding, and let \(\mathfrak{D}\) be a polyhedral pullback along \(\varphi\) of a torus-invariant polyhedral divisor on \(\Pp^m\). Let \(I_Y\) be the ideal of \(Y^\circ = Y \cap T_{\Pp^m}\), let \(\mathcal{G}\) be a suitable generating set (see Section \ref{subsec-semi-canonical}), and let $J_\D$ be the vanishing ideal of $X(\D)$ under its semi-canonical embedding. Then, $\Trop(J_\D) \isom \Trop(I_Y) \times N_\Rr$, and a prime cone, \(C\), of \(\Trop(I_Y)\) lifts to a prime cone of \(\Trop(J_\D)\) whenever \(\deg (\mathrm{in}_w(g)) = \deg(g)\) for a fixed $w \in \mathrm{relint}(C)$ and all \(g \in \mathcal{G}\).
\end{theorem}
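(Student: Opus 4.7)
The plan is to exploit the bihomogeneity of the semi-canonical embedding---with one grading coming from the torus $T$ and another induced by the projective embedding $\varphi$---in order to reduce the tropical question about $J_\D$ to one about $I_Y$, and then to use the degree-preservation hypothesis to identify the initial ideal at a prime cone with the vanishing ideal of a degenerate $T$-variety built on the degenerate base.

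First I would unpack the semi-canonical embedding by noting that each generator $g \in \mathcal{G}$ carries a character weight $u_g \in M$ and a base degree $d_g \in \Zz_{\geq 0}$, so that the variable $x_g$ is bihomogeneous and $J_\D$ is generated by bihomogeneous elements with respect to the $M \oplus \Zz$-grading on $\Cc[\Aa^N]$. Using this, I would identify the lineality space of $\Trop(J_\D)$ with a copy of $N_\Rr$ coming from the torus grading, and the quotient modulo this lineality with $\Trop(I_Y)$, by splitting the generators of $J_\D$ into (i) relations pulled back from $I_Y$ through the map sending each $g$ to the section of $\OO(d_g)$ it represents, and (ii) polyhedral-divisor relations which are binomial in the bigraded coordinates. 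This yields the product decomposition $\Trop(J_\D) \isom \Trop(I_Y) \times N_\Rr$.

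Next, fix a prime cone $C \subseteq \Trop(I_Y)$, choose $w \in \mathrm{relint}(C)$, and let $(w, v) \in \Trop(J_\D)$ be its lift. Bihomogeneity of $J_\D$ implies that the $v$-component only reweights each monomial by a character of $T$ and therefore $\mathrm{in}_{(w,v)}(J_\D) = \mathrm{in}_{(w,0)}(J_\D)$. I would then show that this ideal is the vanishing ideal of the semi-canonical embedding of the $T$-variety $X(\mathrm{in}_w(\D))$ constructed from the degenerate base $Y_w = V(\mathrm{in}_w(I_Y))$, where each generator $g \in \mathcal{G}$ is replaced by its initial form $\mathrm{in}_w(g)$. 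The hypothesis $\deg(\mathrm{in}_w(g)) = \deg(g)$ is precisely what guarantees that $\mathrm{in}_w(g)$ still lies in the graded piece of degree $d_g$, so no character space collapses under the degeneration and the degenerate polyhedral divisor still describes a bona fide $T$-variety on $Y_w$. Since $C$ is prime, $Y_w$ is integral, and hence so is $X(\mathrm{in}_w(\D))$.

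The principal technical obstacle will be the identification in the last paragraph: proving that $\mathrm{in}_{(w,0)}(J_\D)$ coincides with the full vanishing ideal of $X(\mathrm{in}_w(\D))$ in its semi-canonical embedding, rather than being a bihomogeneous subideal or overideal cutting out the same scheme. This requires carefully tracking initial forms of both the base relations and the polyhedral-divisor relations through the degeneration, and verifying that degree preservation forces each initial relation to retain its expected bidegree so that no extraneous components appear. Once this identification is in hand, primeness of $\mathrm{in}_{(w,v)}(J_\D)$ for every $v$ follows from integrality of $X(\mathrm{in}_w(\D))$, showing that $C \times N_\Rr$ is a prime cone of $\Trop(J_\D)$.
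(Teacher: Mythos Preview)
Your high-level strategy---identify $\mathrm{in}_w(J_\D)$ with the vanishing ideal of a $T$-variety built on the degenerate base $Y_w$---is exactly what the paper does. But your proposal rests on a misreading of $\mathcal{G}$ that propagates through the whole argument. The set $\mathcal{G}$ is \emph{not} a generating set for the coordinate ring $R(\D)$: it is a generating set for the ideal $I_Y \cap \Cc[t_1,\dotsc,t_m]$ of the base. The semi-canonical variables are indexed by a Hilbert basis $\mathcal{H}$ of the cone $\delta^\vee \subset \Zz^m \times M$, not by $\mathcal{G}$, so there are no ``variables $x_g$'' and no character weight $u_g$ attached to an element $g \in \mathcal{G}$. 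The bigrading you want does exist, but it lives on the $x_{(h,h')}$, and $\mathcal{G}$ enters only as a source of relations.

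What the paper actually does to make the identification $\mathrm{in}_w(J_\D) = J_{\D_w}$ precise is to factor through the semigroup algebra $R(\D_\text{toric}) = \Cc[\delta^\vee \cap (\Zz^m \times M)]$ and work with $I_\D = \ker(R(\D_\text{toric}) \to R(\D))$ instead of $J_\D$ directly. The key structural input is an explicit finite generating set for $I_\D$: for each $g \in \mathcal{G}$ one defines a \emph{degree polyhedron} $P_g \subset \Rr^m \times N_\Rr$ (depending only on $\deg(g)$ and the $\Delta_i$) characterizing exactly those $(v,u)$ for which $gt^v\chi^u \in R(\D_\text{toric})$, and $I_\D$ is generated by $\{gt^v\chi^u : g \in \mathcal{G},\ (v,u) \in \mathcal{P}_g\}$ where $\mathcal{P}_g$ is a $\delta^\vee$-module generating set for $P_g$. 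The hypothesis $\deg(\mathrm{in}_w(g)) = \deg(g)$ then forces $P_g = P_{\mathrm{in}_w(g)}$, so the same $(v,u)$'s produce generators for $I_{\D_w}$, and both inclusions $\mathrm{in}_w(I_\D) \subseteq I_{\D_w}$ and $I_{\D_w} \subseteq \mathrm{in}_w(I_\D)$ follow. Without this device, your ``principal technical obstacle'' remains an obstacle: you have no concrete handle on generators of $J_\D$ or on why passing to initial forms does not lose or gain relations.

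For the tropicalization isomorphism, the paper's route is more direct than yours: since $X(\D)$ meets the big torus in $Y^\circ \times T_N$, one has $\Trop(I_\D) = \Trop(I_Y) \times N_\Rr$ immediately, and then $\Trop(J_\D)$ is its image under the injective linear map $v \mapsto (\langle h, v\rangle)_{h \in \mathcal{H}}$ induced by the monomial embedding $X(\D_\text{toric}) \hookrightarrow \Aa^{\#\mathcal{H}}$. No splitting of $J_\D$ into ``base relations plus binomials'' is needed for this step.
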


Theorem \ref{thm-technical} provides a road map for building well-poised $T$-varieties whose bases are well-poised. For example, it was shown in \cite{Speyer-Sturmfels} that $\mathrm{Gr}(2,n)$ is well-poised under its Pl{\"u}cker embedding. Moreover, the Pl{\"u}cker relations turn out to satisfy the conditions needed in Theorem \ref{thm-technical}; thus, any polyhedral pullback of a torus-invariant polyhedral divisor on $\Pp^{\binom{n}{2}-1}$ will produce a $T$-variety whose semi-canonical embedding is well-poised.

In Section \ref{genArrVar}, we apply Theorem \ref{thm-technical} to affine arrangement varieties. An arrangement variety $X$ is a $T$-variety whose base is $\Pp^c$. Note that rational complexity 1 $T$-varieties are arrangement varieties. The term arrangement is used, because the associated polyhedral divisor $\D$ on $\Pp^c$ can be viewed as a full rank hyperplane arrangement $\AA\subset \Pp^c$ where each hyperplane is decorated by a polyhedron. The hyperplane arrangement $\AA$ induces a well-poised, linear embedding of $\Pp^c$ in $\Pp^{m}$ where $m = |\AA| - 1$, and there is a $T_{\Pp^{m}}$-invariant polyhedral divisor whose polyhedral pullback to $\Pp^c$ is $\D$. It can then be checked that the circuits of the linear ideal cutting out the image of the base satisfy all the conditions in Theorem \ref{thm-technical}. 

\begin{theorem}\label{thm-main}
Let $X$ be an affine arrangement variety. Any semi-canonical embedding of $X$ is well-poised. 
\end{theorem}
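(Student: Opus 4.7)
The plan is to apply Theorem \ref{thm-technical} with $Y = \Pp^c$ under the linear embedding $\varphi \colon \Pp^c \hookrightarrow \Pp^m$ (where $m = |\AA|-1$) determined by the full-rank hyperplane arrangement $\AA$ underlying the polyhedral divisor $\D$ of $X$. As noted in the paragraph preceding the statement, the decorated arrangement $\AA$ yields a $T_{\Pp^m}$-invariant polyhedral divisor on $\Pp^m$ whose polyhedral pullback along $\varphi$ is $\D$, so $X$ already fits into the framework of Theorem \ref{thm-technical}. The embedding $\varphi$ is projectively normal because its image is a linear subspace of $\Pp^m$, whose homogeneous coordinate ring is a polynomial ring.

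The first step is to show that $Y$ with this embedding is well-poised. The vanishing ideal $I_Y$ is generated by the linear dependencies among the defining forms of the hyperplanes in $\AA$, and the circuits of the matroid $M(\AA)$ form a universal Gr\"obner basis for $I_Y$. For any $w \in \Rr^{m+1}$, the initial ideal $\mathrm{in}_w(I_Y)$ is again generated by linear forms and so defines a linear subspace of $\Aa^{m+1}$; such an ideal is automatically prime. Hence every cone of $\Trop(I_Y)$---which is the Bergman fan of $M(\AA)$---is a prime cone.

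Next, I would take $\mathcal{G}$ to be the set of circuit forms of $M(\AA)$ and verify the degree condition required by Theorem \ref{thm-technical}. Each circuit $g \in \mathcal{G}$ is homogeneous of degree one in the standard $\Zz$-grading, and $\mathrm{in}_w(g)$ is the sum of those monomials of $g$ attaining the minimal $w$-weight, hence is again a linear form of degree one. Consequently $\deg(\mathrm{in}_w(g)) = \deg(g)$ holds automatically for every $w$ in the relative interior of any cone of $\Trop(I_Y)$.

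Applying Theorem \ref{thm-technical}, every prime cone of $\Trop(I_Y)$ lifts to a prime cone of $\Trop(J_\D)$. Combined with the decomposition $\Trop(J_\D) \isom \Trop(I_Y) \times N_\Rr$ and the fact that every cone of $\Trop(I_Y)$ is prime, this shows that every cone of $\Trop(J_\D)$ is prime, so $X$ is well-poised under its semi-canonical embedding. The main obstacle in filling in the details is confirming that the circuit forms do qualify as a ``suitable generating set'' in the precise sense of Section \ref{subsec-semi-canonical} and that the notion of ``degree'' used in Theorem \ref{thm-technical} is genuinely compatible with the standard $\Zz$-grading on $\Pp^m$---only then does the linear-form argument deliver the desired equality of degrees without further bookkeeping.
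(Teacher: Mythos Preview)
Your proposal is correct and follows essentially the same route as the paper's proof of Theorem \ref{general arrangement varieties are well-poised}: the base is well-poised because every initial ideal of the linear ideal $I_Y$ is again linear (equivalently, $Y_w\cong\Pp^c$), the circuits of $I_Y$ are taken as $\mathcal{G}$, and the degree condition of Theorem \ref{thm-technical} holds trivially since circuits and their initial forms are all linear. Your closing caveats are resolved in Section \ref{subsec-semi-canonical}: the ``suitable generating set'' requirements (containing a Gr\"obner basis for a degree-refining order and minimality under division by variables) are satisfied by circuits of a linear ideal, and the degree in Theorem \ref{thm-technical} is indeed the total degree in $\Cc[t_1,\dotsc,t_m]$, so no further bookkeeping is needed.
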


Section \ref{genArrVar} ends with a description of the toric degenerations of a semi-canonically embedded, affine \textit{general} arrangement variety $X$ coming from its tropicalization. If $X = V(I)$ is its semi-canonical presentation, then as we shall see, $\Trop(I)$ is isomorphic to the Bergman fan of a uniform matroid crossed with a lineality space. This allows us to give explicit descriptions of the corresponding value semigroups (Theorem \ref{value-semigroup}) and Newton-Okounkov cones (Corollary \ref{cor-NOKcone}).

In Section \ref{sec-hypertoric}, we apply Theorems \ref{thm-main2} and \ref{thm-main} to hypertoric varieties which are hyperk{\"a}hler analogues of toric varieties and have been studied widely, e.g. \cite{Hausel-Sturmfels}, \cite{Proudfoot}, \cite{Kutler}, \cite{Bielawski-Dancer}, etc. Hypertoric varieties can be constructed as GIT quotients of certain subvarieties of the cotangent bundle $T^* \Aa^d$ (see \cite{Hausel-Sturmfels}). We argue that these subvarieties are in fact semi-canonically embedded arrangement varieties in $T^*\Aa^d \isom \Aa^{2d}$, so Theorems \ref{thm-main2} and \ref{thm-main} imply that these too have well-poised embeddings.

\subsection{Acknowledgements}

We thank Nathan Ilten for the suggestion to study the well-poised property for arrangement varieties. We also thank Max Kutler and Milena Wr\"obel for useful conversations. The second author was supported by Simons Collaboration Grant 587209 during this project. 

\section{Preliminaries}\label{sec-preliminaries}

\subsection{Valuations, Khovanskii bases, and prime cones}\label{prelim-valuations}

In this section, we introduce the constructions we will need from tropical geometry and the theory of valuations.  For background on tropical geometry, we suggest the book by Maclagan and Sturmfels \cite{Maclagan-Sturmfels}.  For background on Khovanskii bases and prime cones see \cite{Kaveh-Manon-NOK}, and for the notion of a well-poised ideal see \cite{Ilten-Manon}. Throughout this section, we will assume that $\mathbb{K}$ is a trivially valued, algebraically closed field.

Let $\Gamma$ be an abelian group equipped with a total group order $\prec$, and let $\bar{\Gamma} = \Gamma \cup \{\infty\}$.  Recall that a valuation $\mathfrak{v}: A \to \bar{\Gamma}$ over $\Kk$ is a function satisfying the following conditions for any $f,g \in A$ and any $C \in \mathbb{K}\setminus\{0\}$.
\begin{enumerate}
    \item $\mathfrak{v}(fg) = \mathfrak{v}(f) + \mathfrak{v}(g)$
    \item $\mathfrak{v}(f + g) \geq \mathrm{min}\{\mathfrak{v}(f), \mathfrak{v}(g)\}$
    \item $\mathfrak{v}(0) = \infty$
    \item $\mathfrak{v}(Cf) = \mathfrak{v}(f)$  
\end{enumerate}
In this paper we take $\Gamma = \Qq^r$ or $\Zz^r$ equipped with the lexicographic order. If $A$ is graded by an abelian group $M$, a valuation is said to be homogeneous with respect to $M$ if the value of a general element of $A$ is always obtained on one of its homogeneous components. 

For any $q \in \Gamma$ there is a $\Kk$-vector space 
\[
    F_{\succeq q}(\mathfrak{v}) = \{a \in A \mid \mathfrak{v}(a) \succeq q\}.
\]  

The subspace $F_{\succ q}(\mathfrak{v}) \subseteq F_{\succeq q}(\mathfrak{v})$ is defined similarly. 
It is easy to check that $F_{\succeq q}(\mathfrak{v})F_{\succeq q'}(\mathfrak{v}) \subseteq F_{\succeq q + q'}(\mathfrak{v})$ and if $q \prec q'$ then $F_{\succeq q}(\mathfrak{v}) \supseteq F_{\succeq q'}(\mathfrak{v})$; in this way the spaces $F_{\succeq q}(\mathfrak{v})$ form a $\Kk$-algebra filtration of $A$.  We let
\[
    \mathrm{gr}_\mathfrak{v}(A) = \bigoplus_{q \in \Gamma}F_{\succeq q}(\mathfrak{v})/F_{\succ q}(\mathfrak{v})
\]\noindent
denote the \emph{associated graded} algebra of $\mathfrak{v}$. It is straightforward to check that the properties of valuations ensure that $\mathrm{gr}_\mathfrak{v}(A)$ is $\Kk$-domain. 

We let $S(A, \mathfrak{v}) \subseteq \Gamma$ denote the set of finite values of $\mathfrak{v}$.  This set contains $0$ and is closed under the group operation in $\Gamma$, so it is referred to as the \emph{value semigroup} of the valuation $\mathfrak{v}$.  The rank $\mathrm{rank}(\mathfrak{v})$ is defined to be the rank of the subgroup of $\Gamma$ generated by $S(A, \mathfrak{v})$. We say that $\mathfrak{v}$ has \emph{full rank} if $\mathrm{rank}(\mathfrak{v})$ is equal to the Krull dimension of $A$ as a $\Kk$-algebra.  Under our assumptions if $\mathfrak{v}$ is full rank then $\mathrm{gr}_\mathfrak{v}(A)$ is isomorphic to the semigroup algebra $\Kk[S(A, \mathfrak{v})]$, see \cite[ Proposition 2.4]{Kaveh-Manon-NOK}.  The closure of the convex hull $\mathrm{conv}(S(A,\mathfrak{v}))$ is denoted $P(A, \mathfrak{v})$; it is known as the Newton-Okounkov cone of $\mathfrak{v}$.

Moreover, if $A$ is positively graded, one can define the Newton-Okounkov body $\Delta(A, \mathfrak{v})$ to be the closure of $\mathrm{conv}\{ \frac{\mathfrak{v}(f)}{\deg(f)} \mid \mathfrak{v}(f) < \infty\}$ (see \cite{Lazarsfeld-Mustata}, \cite{Kaveh-Khovanskii}, \cite{Kaveh-Manon-NOK}).  It is known that the Newton-Okounkov body $\Delta(A, \mathfrak{v})$ carries algebraic information about $A$ and geometric information about the $\Kk$-scheme $\textup{Proj}(A)$.  

With respect to the valuation $\mathfrak{v}$, a \emph{Khovanskii basis} (\cite[Definition 2.5]{Kaveh-Manon-NOK}) is a $\Kk$-algebra generating set $\mathcal{B} \subset A$ with the property that the equivalence classes $\overline{\mathcal{B}} \subset \mathrm{gr}_\mathfrak{v}(A)$ also form a $\Kk$-algebra generating set. For example, if $\mathfrak{v}$ is full rank, then any generating set whose equivalence classes in $\mathrm{gr}_\mathfrak{v}(A)$ form a system of semigroup generators of $S(A, \mathfrak{v})$ is a Khovanskii basis. If a $\mathcal{B} \subset A$ is a finite Khovanskii basis of a full rank valuation $\mathfrak{v}$, then $\Delta(A, \mathfrak{v}) = \mathrm{conv}\{\frac{\mathfrak{v}(b)}{\deg(b)} \mid b \in \mathcal{B}\}$.  Moreover, a result of Anderson \cite{Anderson} states that in this case, $\textup{Proj}(A)$ can be degenerated to a projective toric variety whose normalization is the toric variety associated to the polytope $\Delta(A, \mathfrak{v})$.

Let $\phi: \Kk[{\bf x}] \to A$ be the map from a polynomial ring on $n=|\mathcal{B}|$ variables determined by $\mathcal{B} \subset A$, and let $I = \ker(\phi)$.  Theorem 2 of \cite{Kaveh-Manon-NOK} connects valuations with finite Khovanskii basis $\mathcal{B}$ to the structure of the tropical variety $\Trop(I)$. First, recall that for any point ${\bf u} \in \Rr^n$ and polynomial $f = \sum C_\alpha{\bf x}^\alpha \in \Kk[{\bf x}]$ there is an associated \emph{initial form}, $\mathrm{in}_{\bf u}(f)$, obtained as the sum of those monomial terms $C_\alpha{\bf x}^\alpha$ for which $\langle {\bf u}, \alpha \rangle$ is minimized.  For an ideal $I \subset \Kk[{\bf x}]$, the \emph{initial ideal}, $\mathrm{in}_{\bf u}(I) \subset \Kk[{\bf x}]$, is the ideal generated by the initial forms of the elements of $I$. For both of these notions see \cite{Maclagan-Sturmfels} or \cite{GBCP}.  A key result of the theory of Gr\"obner bases is that there are only finitely many initial ideals, and the equivalence classes $C_{\bf u} = \{ {\bf w} \mid \mathrm{in}_{\bf w}(I) = \mathrm{in}_{\bf u}(I)\}$ form the relatively open faces of a polyhedral fan $\Sigma(I)$ in $\Rr^n$ called the Gr\"obner fan.  If $I$ is homogeneous, the fan $\Sigma(I)$ is complete. In this case, the tropical variety $\Trop(I) \subset \Rr^n$ is realized as the set of those ${\bf u}$ such that $\mathrm{in}_{\bf u}(I)$ contains no monomials, and it is known that $\Trop(I)$ is the support of a subfan of $\Sigma(I_h)$, where $I_h$ is the homogenization of $I$.  The following is \cite[Theorem 4]{Kaveh-Manon-NOK}.

\begin{theorem}
Let $\phi: \Kk[{\bf x}] \to A$ be a presentation with ideal $I$ and let $\mathcal{B} = \phi({\bf x})$.  Let $C \subset \Trop(I)$ be a relatively open cone
of full dimension $(= d)$ such that $\mathrm{in}_{\bf u}(I)$ is prime for all ${\bf u} \in C$.  Then for every linearly independent subset $\{ {\bf u}_1, \ldots, {\bf u}_d\} \subset C$ there is a full rank valuation $\mathfrak{v}: A\setminus\{0\} \to \Qq^d$ with Khovanskii basis $\mathcal{B}$, such that for any ${\bf u} \in C$ we have $\mathrm{gr}_{\mathfrak{v}}(A) \cong \Kk[{\bf x}]/\mathrm{in}_{\bf u}(I)$.
\end{theorem}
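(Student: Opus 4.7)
\medskip

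\textbf{Proof proposal.}  The plan is to produce the valuation $\mathfrak{v}$ directly from the linearly independent tuple $(\mathbf{u}_1, \dots, \mathbf{u}_d)$ via a lex-iterated initial form construction on $\Kk[\mathbf{x}]$, then push it down to $A$, and finally use the hypothesis that $\mathrm{in}_{\bf u}(I)$ is prime for all ${\bf u} \in C$ to upgrade the resulting quasi-valuation to a genuine valuation with the stated associated graded.

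First I would define $\tilde{\mathfrak{v}}: \Kk[\mathbf{x}] \setminus \{0\} \to \Qq^d$ by
\[
    \tilde{\mathfrak{v}}\!\left(\sum c_\alpha \mathbf{x}^\alpha\right) \;=\; \min{}_{\text{lex}}\bigl\{(\langle \mathbf{u}_1,\alpha\rangle, \dots, \langle \mathbf{u}_d,\alpha\rangle) \,\mid\, c_\alpha \neq 0\bigr\},
\]
and then define $\mathfrak{v}: A \setminus \{0\} \to \Qq^d$ by $\mathfrak{v}(\bar f) = \max{}_{\text{lex}}\{\tilde{\mathfrak{v}}(f) \mid \phi(f) = \bar f\}$. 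The key geometric input is the following perturbation observation. Because $C$ is relatively open and full-dimensional and the $\mathbf{u}_i$ are linearly independent, for sufficiently small $\epsilon_1 \gg \epsilon_2 \gg \cdots \gg \epsilon_{d-1} > 0$ the perturbed weight $\mathbf{u}_\epsilon = \mathbf{u}_1 + \epsilon_1 \mathbf{u}_2 + \cdots + \epsilon_{d-1}\mathbf{u}_d$ lies in the relative interior of $C$, and the scalar initial form $\mathrm{in}_{\mathbf{u}_\epsilon}(f)$ picks out exactly the lex-minimum monomial (or monomials with equal weight tuple) appearing in $f$. Consequently the iterated initial ideal $\mathrm{in}_{\mathbf{u}_d}\cdots \mathrm{in}_{\mathbf{u}_1}(I)$ agrees with $\mathrm{in}_{\mathbf{u}_\epsilon}(I) = \mathrm{in}_C(I)$, which is the common value of $\mathrm{in}_{\bf u}(I)$ for all ${\bf u} \in C$.

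Next I would verify the valuation axioms for $\mathfrak{v}$. The easy axioms (on scalars, on sums, and on zero) follow formally from the lex-minimum construction and the fact that $\phi$ is surjective. The crucial axiom is multiplicativity $\mathfrak{v}(\bar f \bar g) = \mathfrak{v}(\bar f) + \mathfrak{v}(\bar g)$. The inequality $\geq$ is automatic; for equality one chooses lifts $f,g$ achieving the lex-maxima, forms $fg$, and needs to know that $\tilde{\mathfrak{v}}(fg) = \tilde{\mathfrak{v}}(f) + \tilde{\mathfrak{v}}(g)$ modulo $I$. By the perturbation argument $\tilde{\mathfrak{v}}(h)$ records the weight of the leading monomials of $\mathrm{in}_{\mathbf{u}_\epsilon}(h)$, and the obstruction to strict additivity is precisely a zero-divisor relation $\mathrm{in}_{\mathbf{u}_\epsilon}(f)\,\mathrm{in}_{\mathbf{u}_\epsilon}(g) \in \mathrm{in}_C(I)$. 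Since $\mathrm{in}_C(I)$ is prime by hypothesis, this forces one of the factors into $\mathrm{in}_C(I)$, contradicting maximality of the chosen lifts — this is where primality of the initial ideal is consumed. I would package this as: the natural surjection $\Kk[\mathbf{x}]/\mathrm{in}_C(I) \twoheadrightarrow \mathrm{gr}_\mathfrak{v}(A)$ (sending each $x_i$ to the class of $\bar{x_i}$ in its graded piece) is well-defined, and primality makes it injective.

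Finally I would show that $\mathcal{B}$ is a Khovanskii basis and $\mathfrak{v}$ has full rank. The Khovanskii property is immediate from the surjection above: the classes of the $\bar{x_i}$ generate $\mathrm{gr}_\mathfrak{v}(A)$ because they generate $\Kk[\mathbf{x}]/\mathrm{in}_C(I)$. For full rank, note that $\dim_\Kk A = \dim_\Kk \mathrm{gr}_\mathfrak{v}(A) = \dim_\Kk \Kk[\mathbf{x}]/\mathrm{in}_C(I) = d$, and since $\mathrm{gr}_\mathfrak{v}(A)$ is a $\Kk$-domain, \cite[Proposition 2.4]{Kaveh-Manon-NOK} identifies it with the semigroup algebra $\Kk[S(A,\mathfrak{v})]$ whose rank must equal $d$. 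The identification $\mathrm{gr}_\mathfrak{v}(A) \cong \Kk[\mathbf{x}]/\mathrm{in}_{\bf u}(I)$ for arbitrary ${\bf u} \in C$ then follows because $\mathrm{in}_{\bf u}(I) = \mathrm{in}_C(I)$ throughout the relative interior. The main obstacle is the multiplicativity step: keeping careful track of how the lex-iterated structure interacts with lifts, and ensuring that the primality hypothesis is applied to precisely the right initial form, is the only nontrivial piece — everything else is formal bookkeeping around the perturbation $\mathbf{u}_\epsilon$.
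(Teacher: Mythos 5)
Your construction is exactly the one the paper describes immediately after the theorem statement (the matrix with rows ${\bf u}_i$, the lex-min weight quasi-valuation $\tilde{\mathfrak{v}}$ on $\Kk[{\bf x}]$, and its pushforward to $A$ via the max over lifts); the paper itself does not prove this result but quotes it as \cite[Theorem 4]{Kaveh-Manon-NOK}, whose proof runs along the same lines you outline, with primality of $\mathrm{in}_C(I)$ killing the zero-divisor obstruction to multiplicativity and identifying $\mathrm{gr}_{\mathfrak{v}}(A)$ with $\Kk[{\bf x}]/\mathrm{in}_C(I)$. The only slips are cosmetic: in the full-rank step $\dim_\Kk$ should be Krull dimension (with $\dim A = d$ coming from the structure theorem for tropical varieties), and one should check that the max over lifts is actually attained.
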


The cone $C$ is said to be a \emph{prime cone} because the initial ideal $\mathrm{in}_{\bf u}(I)$, which is constant over ${\bf u} \in C$, is a prime ideal. 

\begin{definition}
Let $\phi: \Kk[{\bf x}] \to A$ be a presentation with ideal $I$. We say that $\Trop(I)$ is \textit{well-poised} if every relatively open cone $C$ of $\Trop(I)$ is a prime cone.
\end{definition}

To build the valuation $\mathfrak{v}$, one forms the matrix $M$ with rows equal to the ${\bf u}_i$.  The matrix $M$ defines a rank $d$ valuation on the polynomial ring $\Kk[{\bf x}]$ by sending a monomial ${\bf x}^{\alpha}$ to the vector $M\alpha \in \Qq^r$. A polynomial $f = \sum C_\alpha{\bf x}^\alpha \in \Kk[{\bf x}]$ is sent to $\tilde{\mathfrak{v}}_M(f) = \mathrm{min}\{ M\alpha \mid C_\alpha \neq 0\}$, where $\mathrm{min}$ is taken with respect to the lexicographic ordering on $\Qq^r$.  The valuation $\mathfrak{v}_M: A\setminus \{0\} \to \Qq^r$ is then defined to be the \emph{pushforward} of $\tilde{\mathfrak{v}}_M$:

\[\mathfrak{v}_M(f) = \mathrm{max}\{ \tilde{\mathfrak{v}}(p) \mid \phi(p) = f \}.\]

\noindent
Once again, $\mathrm{max}$ is taken with respect to the lexicographic ordering on $\Qq^r$. In this case, the value semigroup $S(A, \mathfrak{v}_M)$ can be computed as the $\Zz_{\geq 0}$-combinations of the columns of $M$.

\subsection{$T$-Varieties and Polyhedral Divisors}\label{sec-TVarsPrelims}
Let \(T\) be an algebraic torus with character and co-character lattices \(M\) and \(N\), respectively. Recall that a normal affine complexity \(c\) \(T\)-variety is a normal affine variety, \(X\), equipped with an effective torus action, \(T\times X \to X\), so that \(c = \dim(X) - \dim(T)\). Such varieties can be built out of a base variety \(Y\) with \(\dim(Y)= c\) and some combinatorial data. The base variety can be taken to be the normalized Chow quotient of $X$ by $T$. Together this information is known as a {\it polyhedral divisor} on \(Y\) (see \cite{Altman-Hausen}).

Here we will give a brief introduction to the correspondence between affine \(T\)-varieties and polyhedral divisors. Fix a normal semi-projective base, \(Y\), and a rational pointed polyhedral cone \(\sigma \subseteq N_\Qq := N\otimes \Qq\). A polyhedron is a \(\sigma\)-\textit{polyhedron} if it can be written as \(\Pi + \sigma\) where \(\Pi\subseteq N_\Qq\) is a rational polytope and \(+\) denotes Minkowski sum. Then a polyhedral divisor on \(Y\) is a formal sum
\[
    \mathfrak{D} = \sum_i \Delta_i \otimes D_i
\]
where each \(D_i\) is a distinct effective Cartier divisor on \(Y\) and each \(\Delta_i\) is a rational \(\sigma\)-polyhedron in \(N_\Qq\), and all but finitely many $\Delta_i$ are equal to $\sigma$. Note that we do allow the empty set as a $\sigma$-polyhedron.

A non-empty $\sigma$-polyhedron $\Delta$ induces a piece-wise linear map defined on the dual cone $\sigma^\vee$ given by 
\[
    \Delta(u) = \mathrm{min}_{v\in\Delta}\langle u,v \rangle.
\]
Note that for any $u \in \sigma^\vee$, the minimum is always achieved at a vertex of $\Delta$. Now, any polyhedral divisor \(\mathfrak{D}\) gives a piece-wise linear map 
\begin{align*}
    \mathfrak{D} : \sigma^\vee &\to \mathrm{Div}_{\Qq \cup \infty} (Y) \\
    u &\mapsto \sum_{i} \Delta_i(u) \cdot D_i
\end{align*}
where \(\mathrm{Div}_{\Qq\cup\infty}(Y)\) is the semigroup of finite formal sums of Weil divisors on \(Y\) whose coefficients are taken in \(\Qq\cup \infty\). By convention, \(\Delta_i(u) = \infty\) if and only if \(\Delta_i = \emptyset\). If for every \(u \in \sigma^\vee \cap M\), \(\mathfrak{D}(u)\) is semi-ample, and \(\mathfrak{D}(u)\) is big whenever \(u \in \text{relint}(\sigma^\vee) \cap M\), we can construct a normal affine \(T\)-variety
\[
    X(\mathfrak{D}) = \Spec \bigoplus_{u\in\sigma^\vee \cap M} H^0(Y, \OO(\lfloor \mathfrak{D}(u) \rfloor)) \cdot \chi^u.
\]
In this case, we say \(\mathfrak{D}\) is a {\it proper} polyhedral divisor (see \cite{Altman-Hausen}). If one allows for the $D_i$ to repeat, $\D$ might not even be a polyhedral divisor; however, one can still build an $M$-graded algebra in the same manner as above. The key difference is that the resulting $T$-variety may fail to be normal.

In this paper, we are interested in the case when the base, \(Y \subseteq \Pp^m\), is projectively normal, and each \(D_i\) is the pullback of a coordinate hyperplane $V(x_i) \subset \Pp^m$. In this case, the positivity conditions amount to checking \(\sum_{i} \Delta_i \subsetneq \sigma\). Indeed, semi-amplitude, bigness, and being Cartier are all properties which are preserved under pullbacks, so it suffices to check that the polyhedral divisor has these properties on $\Pp^m$. This is the case as long as each $\D(u)$ is equivalent to an effective divisor which happens if and only if the Minkowski sum, $\sum_i \Delta_i$, is strictly contained in $\sigma$.

%%%%%%%%%%%%%%%%%%%%%%%%%%
%       QOUTIENTS OF WP T-VARS
%%%%%%%%%%%%%%%%%%%%%%%%%%
%Dolgachev and Mumford references

\section{GIT Quotients of Well-poised Varieties}\label{sec-quotients}
In this section, we will study how the well-poised condition interacts with geometric invariant theory (GIT) quotients. In particular, given a well-poised embedding of a $T$-variety $X$ and a character $\alpha$, there is a well-poised embedding of the GIT quotient $X \sslash_\alpha T$. For the basics of geometric invariant theory, see \cite{Dolgachev}, \cite{Mumford}.  

In Corollaries \ref{cor-veronese} and \ref{cor-segre}, we apply Theorem \ref{thm-quotients of wellpoised} to Veronese and Segre embeddings of well-poised varieties.  In Corollary \ref{cor-hassett} we show that Hassett spaces with small weights \cite{Alexeev} have well-poised embeddings. Results from this section will be used later in Section \ref{sec-hypertoric} to study hypertoric varieties.

\begin{theorem}\label{thm-quotients of wellpoised}
Let $R = \Cc[x_1,\dotsc,x_n]/I$ be a well-poised presentation of the coordinate ring of a normal variety $X$. Moreover, assume that $R$ is graded by a finitely generated abelian group $M$, $\dim_\Cc(R_\alpha) < \infty$ for all $\alpha \in M$, and $R_0 = \Cc$. It follows that $X$ has an effective action by the quasi-torus $T = \Hom(M,\Cc^\times)$. Then, there is a well-poised embedding of the GIT quotient $X \sslash_\beta T$ for any choice of character $\beta \in M$.
\end{theorem}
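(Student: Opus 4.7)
The plan is to realize $X \sslash_\beta T$ as the affine variety $\Spec(R_\beta)$, where $R_\beta = \bigoplus_{k \geq 0} R_{k\beta}$ is the $\beta$-semi-invariant section ring, and to deduce well-poisedness by pulling weights back to $\Trop(I)$ through an auxiliary incidence ideal. First I would present $R_\beta$. The hypotheses $R_0 = \Cc$ and $\dim_\Cc R_\alpha < \infty$ imply that the sub-semigroup $S := \{u \in \Zz_{\geq 0}^n : \mathrm{wt}(u) \in \Zz_{\geq 0}\beta\}$ of $\Zz_{\geq 0}^n$ is finitely generated (Gordan's lemma). Choose generators $u_1, \ldots, u_N$ of $S$ and set $m_i = x^{u_i} \in R$; these generate $R_\beta$ as a $\Cc$-algebra, yielding a presentation $\phi : \Cc[y_1, \ldots, y_N] \twoheadrightarrow R_\beta$ with kernel $J$. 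Give each $y_i$ the $M$-weight of $m_i$, and form the $M$-homogeneous incidence ideal $K = I + \langle y_i - m_i(x) : 1 \leq i \leq N\rangle \subset \Cc[x,y]$, so that $\Cc[x,y]/K \cong R$ and $K \cap \Cc[y] = J$.

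Next I would analyze $\Trop(J)$. The linear map $A : \Rr^n \to \Rr^N$, $u \mapsto (\langle u, u_i\rangle)_i$, tropicalizes the dominant monomial map $X \to \Spec(R_\beta)$ induced by $R_\beta \hookrightarrow R$, so the fundamental theorem of tropical geometry gives $\Trop(J) = A(\Trop(I))$. For $w \in \Trop(J)$, pick $u \in \Trop(I)$ with $A(u) = w$ and set $\tilde w = (u, w) \in \Rr^{n+N}$. Each binomial $y_i - m_i(x)$ is $\tilde w$-homogeneous (both terms have weight $w_i$), so it equals its own initial form. Comparing $M$-graded Hilbert functions then shows the inclusion $\mathrm{in}_{\tilde w}(K) \supseteq \mathrm{in}_u(I) + \langle y_i - m_i(x)\rangle$ is actually an equality: both quotients are $M$-graded isomorphic to $\Cc[x]/\mathrm{in}_u(I)$ (after eliminating the $y_i$'s on the right, and using preservation of $M$-Hilbert functions under initial ideals on the left). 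Well-poisedness of $I$ makes $\Cc[x]/\mathrm{in}_u(I)$ a domain, so $\mathrm{in}_{\tilde w}(K)$ is prime.

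To conclude, I would identify $\mathrm{in}_w(J) = \mathrm{in}_{\tilde w}(K) \cap \Cc[y]$. The inclusion $\subseteq$ is immediate. For $\supseteq$, I would again compare $M$-Hilbert functions: $\Cc[y]/\mathrm{in}_w(J)$ has the Hilbert function of $R_\beta$, while $\Cc[y]/(\mathrm{in}_{\tilde w}(K) \cap \Cc[y])$ embeds as the subalgebra of $\Cc[x]/\mathrm{in}_u(I)$ generated by $\{x^{u_i}\}$. The crucial combinatorial point is that, because $S = \mathrm{wt}^{-1}(\Zz_{\geq 0}\beta) \cap \Zz_{\geq 0}^n$, the $k\beta$-graded piece of this subalgebra is spanned by $\{x^a : a \in \Zz_{\geq 0}^n, \mathrm{wt}(a) = k\beta\}$, which exhausts the whole $k\beta$-piece of $\Cc[x]/\mathrm{in}_u(I)$. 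Thus the Hilbert functions match, the ideal inclusion becomes equality, and $\mathrm{in}_w(J)$ is the kernel of a ring map into a domain, hence prime. As $w$ was arbitrary, the presentation $\Cc[y]/J$ is well-poised. The main obstacle is this final Khovanskii basis-type identification: verifying that $\{\overline{m_i}\} \subset \mathrm{gr}_{\mathfrak{v}_u}(R)$ generates a subalgebra whose $M$-Hilbert function exactly matches that of $R_\beta$. It relies both on the fact that $S$ is the \emph{full} preimage $\mathrm{wt}^{-1}(\Zz_{\geq 0}\beta) \cap \Zz_{\geq 0}^n$ and on the finite-dimensionality hypothesis, which make the per-degree comparisons meaningful.
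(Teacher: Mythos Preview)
Your argument is correct and reaches the same conclusion, but by a different technical route from the paper. Both proofs rest on the surjectivity of the tropicalized monomial map $\Trop(I)\to\Trop(J)$, so that every $w\in\Trop(J)$ lifts to some $u\in\Trop(I)$; the difference lies in how one verifies that the chosen monomial generators form a Khovanskii basis for $\mathfrak{v}_u$ restricted to $R_\beta$. You take generators of the \emph{full} preimage semigroup $S=\mathrm{wt}^{-1}(\Zz_{\ge0}\beta)\cap\Zz_{\ge0}^n$, so that the images $x^{u_i}$ automatically span each graded piece $(\Cc[x]/\mathrm{in}_u I)_{k\beta}$, and then deduce primeness of $\mathrm{in}_w(J)$ via the incidence ideal $K$ and a Hilbert-function comparison. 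The paper instead replaces $\beta$ by $N\beta$ for one large $N$, chosen uniformly over the finitely many cones of $\Trop(I)$, so that both $S_\beta$ and every $\mathrm{gr}_{\mathfrak{v}_u}(S_\beta)$ are generated in degree one; then the degree-$\beta$ monomials in the $x_i$ are trivially a Khovanskii basis, and primeness follows without the incidence construction. Your route avoids the Veronese replacement and yields a single fixed presentation valid for all $w$; the paper's route produces degree-one generators (hence an honest projective embedding of $X\sslash_\beta T$) and sidesteps the Hilbert-function bookkeeping. One minor remark: the finite generation of $S$ is indeed Gordan's lemma, but applied to $\ker\bigl(\Zz^{n+1}\to M,\ (a,k)\mapsto\mathrm{wt}(a)-k\beta\bigr)\cap\Zz_{\ge0}^{n+1}$; it does not actually use the hypotheses $R_0=\Cc$ or $\dim_\Cc R_\alpha<\infty$.
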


\begin{proof}
Let $R = \bigoplus_{\alpha \in M} R_\alpha$ be a finitely generated graded algebra with $\dim_\Cc (R_\alpha) < \infty$. Let $\mathfrak{v}: R \to \Gamma$ be a homogeneous valuation with finite Khovanskii basis, and let $S_\beta = \bigoplus_{n \geq 0} R_{n\beta}$ for $\beta \in M$, with corresponding inclusion $\iota: S_\beta \to R$. The algebra $S_\beta$ is a ring of invariants for the action of a quasi-torus $T$, so $S_\beta$ must be finitely generated.  Moreover, the same is true for the subring $\mathrm{gr}_\mathfrak{v}(S_\beta) \subset \mathrm{gr}_\mathfrak{v}(R)$.  After passing to a sufficiently high Veronese subring, we may assume that both $\mathrm{gr}_\mathfrak{v}(S_\beta)$ and $S_\beta$ are generated by their first graded component.  Observe that if $x_1, \ldots, x_n \in R$ is a Khovanskii basis for $\mathfrak{v}$, and $\mathrm{gr}_\mathfrak{v}(S_\beta)$ is generated by its first homogeneous component, then the monomials $y_1, \ldots, y_d \in R_\beta$ of degree $\beta$ are a Khovanskii basis for $S_\beta$.

Now let $R = \Cc[x_1, \ldots, x_n]/I$ be a well-poised presentation of $R$, then the images of the $x_1, \ldots, x_n$ are a Khovanskii basis for the weight valuation $\mathfrak{v}_u$ for any $u \in \Trop(I)$.  Fix a $\beta \in M$ as above.  Observe that the associated graded algebra $\mathrm{gr}_{\mathfrak{v}_u}(R) \cong \Cc[x_1, \ldots, x_n]/\mathrm{in}_u(I)$ depends only on the relative interior of the face $\sigma$ that $u$ lies on.  It follows that $\mathrm{gr}_{\mathfrak{v}_u}(S_\beta)$ also depends only on $\sigma$.  Now take $n_\sigma$ to be sufficiently large so that $\mathrm{gr}_{\mathfrak{v}_u}(S_{n_\sigma \beta})$ is generated by its first homogeneous component.  From now on we replace $\beta$ with $N\beta$ where $N = \prod_{\sigma \in \Trop(I)} n_\sigma$.  It follows that $\mathrm{gr}_{\mathfrak{v}_u}(S_\beta)$ is generated by its first homogeneous component for all $u \in \Trop(I)$.  Moreover, by above the monomials $y_1, \ldots, y_d$ of degree $\beta$ form a Khovanskii basis for the restriction of $\mathfrak{v}_u$ to $S_\beta$ for any $u \in \Trop(I)$. 

Now let $J$ be the ideal of forms which vanish on $y_1, \ldots, y_d \in S_\beta$, and let $\mathrm{Trop}(\phi^*): \Trop(I) \to \Trop(J)$ be the induced linear map on tropical varieties obtained by tropicalizing the monomial map $\phi^*$ induced by the $y_j$'s. The previous paragraph shows that any $w \in \text{Im}(\mathrm{Trop}(\phi^*))$ has $\mathrm{in}_w(J)$ prime.  But, $\phi^*$ is a monomial map, and is therefore a surjective map on tropical varieties by \cite[Lemma 3.2.13]{Maclagan-Sturmfels}.  It follows that the presentation of $S_\beta$ by the monomials $y_1, \ldots, y_d$ of degree $\beta$ is well-poised.
\end{proof}

Some of the most fundamental operations in algebraic geometry can be realized as a GIT quotient by a torus. Theorem \ref{thm-quotients of wellpoised} then implies that these operations preserve the well-poised property. In particular, Veronese embeddings and Segre products preserve the well-poised property.

\begin{corollary}\label{cor-veronese}
Let $X = V(I) \subseteq \Pp^n$ be a projectively normal, well-poised variety and consider the $d^\text{th}$ Veronese of $X$ in $\Pp^{\binom{n+d}{d} - 1}$. This is also a well-poised embedding.
\end{corollary}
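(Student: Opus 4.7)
The plan is to exhibit the $d$-th Veronese as a GIT quotient and then invoke Theorem \ref{thm-quotients of wellpoised}. Let $R = \Cc[x_0, \ldots, x_n]/I$ denote the homogeneous coordinate ring of $X$, equipped with its standard $\Zz$-grading by total degree. Since $X$ is projective and projectively normal, one has $R_0 = \Cc$, each $R_k$ is finite-dimensional over $\Cc$, and $R$ is generated as a $\Cc$-algebra by $R_1$. This grading corresponds to an effective action of the torus $T = \Hom(\Zz, \Cc^\times) \isom \Cc^\times$ on $\Spec(R)$, and $I$ furnishes a well-poised presentation by hypothesis.

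Taking the character $\beta = d \in \Zz$, the invariant subring $S_d := \bigoplus_{k \geq 0} R_{kd}$ is precisely the homogeneous coordinate ring of the image of $X$ under its $d$-th Veronese embedding, presented via the monomials of total degree $d$ in $x_0, \ldots, x_n$; equivalently, $\textup{Proj}(S_d) \cong \Spec(R) \sslash_d \Cc^\times$ carries the natural projective embedding into $\Pp^{\binom{n+d}{d}-1}$. So applying Theorem \ref{thm-quotients of wellpoised} to this setup is the main step.

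The one point requiring care---and the principal obstacle---is that the proof of Theorem \ref{thm-quotients of wellpoised} replaces $\beta$ by a sufficiently divisible multiple $N\beta$ to guarantee that both $S_\beta$ and $\mathrm{gr}_{\mathfrak{v}_u}(S_\beta)$ are generated by their first graded components for every $u \in \Trop(I)$. I need to verify that $N = 1$ suffices here, so that the well-poised embedding produced is the $d$-th Veronese rather than some higher one. For $S_d$ itself this is immediate from degree-one generation of $R$. For the associated graded, observe that $\mathrm{in}_u(I)$ is homogeneous with respect to the standard grading, so $\mathrm{gr}_{\mathfrak{v}_u}(R) \cong \Cc[x_0,\ldots,x_n]/\mathrm{in}_u(I)$ is again generated in degree $1$ by the classes of the $x_i$; consequently its Veronese subring $\mathrm{gr}_{\mathfrak{v}_u}(R)^{(d)} = \mathrm{gr}_{\mathfrak{v}_u}(S_d)$ is generated in degree $d$, uniformly in $u \in \Trop(I)$. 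With this verification in hand, Theorem \ref{thm-quotients of wellpoised} delivers a well-poised presentation of $S_d$ by the degree-$d$ monomials, which is precisely the $d$-th Veronese embedding of $X$.
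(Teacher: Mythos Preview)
Your proposal is correct and follows essentially the same approach as the paper: both realize the $d$-th Veronese as $\widehat{X}\sslash_d \Cc^\times$ and then trace through the proof of Theorem \ref{thm-quotients of wellpoised}, checking that no passage to a multiple of $\beta$ is needed because the degree-$d$ monomials already form a Khovanskii basis. Your treatment of this last point is in fact slightly more explicit than the paper's, which simply notes that the monomials in degree $d$ form a Khovanskii basis since the $\overline{x_i}$ do.
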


\begin{proof}
Let $R = \Cc[x_0,\dotsc, x_n]/I = \bigoplus_{n\geq 0} R_n$. As this embedding is well-poised, $\overline{x_0},\dotsc,\overline{x_n}$ form a Khovanskii basis for any valuation $\mathfrak{v}_w$ where $w \in \Trop(I)$. The $d^\text{th}$ Veronese of $X$ is $\widehat{X}\sslash_d \Cc^\times = \mathrm{Proj}(S)$ where $S \subset R$ given by
\[
    \bigoplus_{n\geq 0} R_{nd}
\]
Let $J \subseteq \Cc[y_\alpha ~|~ \alpha \in \Zz_{\geq 0}^{n+1}, \sum_i \alpha_i = d]$ be the ideal of forms which vanish on the monomials spanning $R_d$, so $V(J) \subseteq \Pp^{\binom{n+d}{d} -1}$ is the image of the $d^\text{th}$ Veronese embedding of $X$. 

Note that the affine cone $\widehat{X}$ is normal as $X$ is projectively normal. Following the proof of Theorem \ref{thm-quotients of wellpoised}, $J$ will be well-poised as long as the monomials in the first homogeneous component of $S$ form a Khovanskii basis, but this is the case as $\overline{x_0},\dotsc,\overline{x_n}$ form a Khovanskii basis for $R$.
\end{proof}

\begin{corollary}\label{cor-segre}
Let $X = V(I) \subseteq \Pp^n$ and $Y = V(J) \subseteq \Pp^m$ be two normal well-poised varieties. Then the Segre embedding $X \times Y \subseteq \Pp^{(n+1)(m+1) - 1}$ is well-poised.
\end{corollary}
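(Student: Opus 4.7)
The strategy is to realize the Segre embedding as a GIT quotient of the product of the affine cones and then invoke Theorem~\ref{thm-quotients of wellpoised}. Write $R = \Cc[x_0,\dotsc,x_n]/I$ and $S = \Cc[y_0,\dotsc,y_m]/J$ for the homogeneous coordinate rings of $\widehat{X}$ and $\widehat{Y}$. The tensor product
\[
    R\otimes_\Cc S \;\cong\; \Cc[x_0,\dotsc,x_n,y_0,\dotsc,y_m]/(I+J)
\]
is the coordinate ring of $\widehat{X}\times\widehat{Y}$, equipped with the natural $\Zz^2$-grading in which $(R\otimes S)_{(a,b)}=R_a\otimes S_b$ and a corresponding $(\Cc^\times)^2$-action. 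For the character $\beta=(1,1)\in\Zz^2$, the subring $\bigoplus_{n\geq 0} R_n\otimes S_n$ is precisely the homogeneous coordinate ring of the Segre image of $X\times Y$, presented by the Segre coordinates $x_iy_j$.

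The main step is to verify that the presentation $\Cc[x,y]/(I+J)$ is itself well-poised. Because $I$ and $J$ involve disjoint variable sets, $\Trop(I+J) = \Trop(I)\times\Trop(J)$ carries the product fan structure, and a standard product Gr\"obner argument gives $\mathrm{in}_{(u,v)}(I+J) = \mathrm{in}_u(I)+\mathrm{in}_v(J)$ for every weight $(u,v)$. Hence
\[
    \Cc[x,y]/\mathrm{in}_{(u,v)}(I+J) \;\cong\; \bigl(\Cc[x]/\mathrm{in}_u(I)\bigr) \otimes_\Cc \bigl(\Cc[y]/\mathrm{in}_v(J)\bigr).
\]
By the well-poised hypotheses on $X$ and $Y$ each tensor factor is an integral domain, and since $\Cc$ is algebraically closed the tensor product of integral $\Cc$-algebras is again integral. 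Thus every cone of $\Trop(I+J)$ is a prime cone, so $V(I+J)\subseteq\Aa^{n+m+2}$ is well-poised.

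The remaining hypotheses of Theorem~\ref{thm-quotients of wellpoised} are immediate: $\widehat{X}\times\widehat{Y}$ is normal because both affine cones are (by projective normality of $X$ and $Y$); $(R\otimes S)_{(0,0)}=\Cc$; and every bi-graded piece is finite-dimensional. Applying the theorem with $M=\Zz^2$ and $\beta=(1,1)$ then produces a well-poised embedding of $(\widehat{X}\times\widehat{Y})\sslash_\beta(\Cc^\times)^2$, which is exactly the Segre image $X\times Y\subseteq\Pp^{(n+1)(m+1)-1}$. As in Corollary~\ref{cor-veronese}, no Veronese rescaling of $\beta$ is required, because the Segre monomials $x_iy_j$ already form a Khovanskii basis of $\bigoplus_n R_n\otimes S_n$: products $(x_{i_1}y_{j_1})\cdots(x_{i_n}y_{j_n})$ surject onto $R_n\otimes S_n$ since the $x_i$ and $y_j$ individually are Khovanskii bases for $R$ and $S$. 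The one place real care is needed is the initial-ideal identity $\mathrm{in}_{(u,v)}(I+J) = \mathrm{in}_u(I)+\mathrm{in}_v(J)$; everything else is bookkeeping built on top of Theorem~\ref{thm-quotients of wellpoised}.
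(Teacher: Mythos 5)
Your proposal is correct and follows essentially the same route as the paper: establish that $I+J$ is a well-poised presentation of $\widehat{X}\times\widehat{Y}$ via the product structure of $\Trop(I+J)$, the splitting $\mathrm{in}_{(u,v)}(I+J)=\mathrm{in}_u(I)+\mathrm{in}_v(J)$, and irreducibility of products over $\Cc$, then apply Theorem~\ref{thm-quotients of wellpoised} with the bigrading and character $(1,1)$, observing that the Segre monomials already form a Khovanskii basis. The only difference is cosmetic (you phrase irreducibility via tensor products of domains rather than products of varieties).
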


\begin{proof}
Let $I \subseteq R = \Cc[x_0,\dotsc,x_n]$ and $J \subseteq S = \Cc[y_0,\dotsc,y_m]$. We claim that $I + J \subseteq T = \Cc[x_0,\dotsc,x_n,y_0,\dotsc,y_m]$ is well-poised. Note that $I+J \subseteq T$ is the ideal for $\widehat{X} \times \widehat{Y} \subseteq \Aa^{m+n+2}$ and that $\widehat{X}$ and $\widehat{Y}$ are both normal as $X$ and $Y$ are projectively normal. The following properties are easily checked.
\begin{enumerate}
    \item $\Trop(I+J)$ is  $\Trop(I) \times \Trop(J) \subseteq \Rr^{n+1} \times \Rr^{m+1}$.
    \item For $w = (w_1,w_2) \in \Rr^{n+1} \times \Rr^{m+1}$, $\mathrm{in}_w (I+J) = \mathrm{in}_{w_1}(I) + \mathrm{in}_{w_2}(J)$.
    \item If $Z_1$ and $Z_2$ are both irreducible/normal affine varieties, then their product is irreducible/normal as well.
\end{enumerate}
It follows that $T/(I+J)$ is a well-poised presentation for $\widehat{X} \times \widehat{Y}$ and that the images of the variables form a Khovanskii basis. The polynomial ring $T$ is equipped with the standard bigrading
\[
\deg(x_i) = (1,0) \text{ and }\deg(y_j) = (0,1) 
\]
and $I+J$ is clearly bigraded. The product $X \times Y$ can be realized as the GIT quotient $\widehat{X} \times \widehat{Y} \sslash_{(1,1)} (\Cc^\times)^2 = \mathrm{Proj} (T')$ where $T'$ is the following subring of $T$.
\[
    \bigoplus_{n\geq 0} (T/(I+J))_{(n,n)}
\]
Let $K \subseteq \Cc[z_{ij}]$ be the ideal of forms which vanish on the variables spanning the first homogeneous component of $T'$, and note that $V(K) \subseteq \Pp^{(n+1)(m+1) - 1}$ is the Segre product $X\times Y$.

Following the proof of Theorem \ref{thm-quotients of wellpoised}, one sees that $K$ is well-poised as long as the monomials $\overline{x_iy_j}$ form a Khovanskii basis for $T'$. Since the variables $\overline{x_0}, \dotsc, \overline{x_n},\overline{y_0}, \dotsc, \overline{y_m}$ form a Khovanskii basis for $T$, it follows that the monomials $\overline{x_i y_j}$ form a Khovanskii basis for $T'$. 
\end{proof}

Theorem \ref{thm-quotients of wellpoised} can also be applied to Hassett spaces with small weights. As discussed below, these spaces can be viewed as GIT quotients of $\mathrm{Gr}(2,n)$ by a torus (see \cite{Alexeev}), so Theorem \ref{thm-quotients of wellpoised} applies as $\mathrm{Gr}(2,n)$ is well-poised (see \cite{Speyer-Sturmfels}). We recall the definition of the Hassett space, $\overline{M}_{0,\beta}$ for an $n$-tuple $\beta$, and give all the relevant definitions and theorems needed to state Theorem \ref{thm-Alexeev}. We refer the reader to \cite{Alexeev} or \cite{Hassett} for a full treatment.

\begin{definition}
Fix $\beta \in ((0,1] \cap \Qq)^n$. A $\beta$-weighted stable curve of genus 0 is a nodal curve $C = \cup \Pp^1$ whose dual graph is a tree with $n$ marked points $P_1,\dotsc,P_n$ satisfying the following two conditions:
\begin{enumerate}
    \item None of the $P_i$'s can be nodes. Moreover, if the points $\{P_i ~|~ i \in I\}$ coincide, then $\sum_{i\in I} \beta_i \leq 1$.
    \item On each irreducible component $C'$ of $C$, one must have $\#\text{nodes} + \sum_{P_i \in C'} \beta_i > 2$, i.e. $K_C + \sum_{i=1}^n \beta_i P_i$ must be an ample divisor.
\end{enumerate}
The fine moduli space of flat families over such curves is known as the Hassett space $\overline{M}_{0,\beta}$.
\end{definition}

Hassett spaces are natural generalizations of $\overline{M}_{0,n}$, and in fact, $\overline{M}_{0,n}$ can be recovered by setting $\beta = (1,\dotsc,1)$. Of course, as $\beta$ varies, the Hassett spaces change as well. To this end, we recall the weight domain $\mathcal{D}(n)$ and its subdivision which can be found in a paper of Alexeev, \cite[Definition 1.2]{Alexeev}. By \cite[Theorem 1.4]{Alexeev}, we see that $\overline{M}_{0,\beta} = \overline{M}_{0,\beta'}$ whenever $\beta$ and $\beta'$ lie in the same chamber.

\begin{definition}
The \textit{weight domain} $\mathcal{D}(n)$ is 
\[
\{\beta \in \Qq^n ~|~ 0 < \beta_i \leq 1, \; \sum \beta_i > 2\}.
\]
The weight domain is subdivided into locally closed chambers $\mathrm{Ch}(\beta)$ by the hyperplanes $\sum_{i \in I} x_i = 1$ for all subsets $I \subseteq \{1,\dotsc,n\}$.
\end{definition}

\begin{theorem}\label{thm-Alexeev}
\cite[Theorem 1.5 (Moduli for small weights)]{Alexeev} Let $\alpha \in \Qq^n$ be a weight with $\sum \alpha_i = 2$ lying on the boundary of a unique chamber $\mathrm{Ch}(\beta)$. Then 
\[
    \overline{M}_{0,\beta} = \mathrm{Gr}(2,n) \sslash_\alpha (\Cc^\times)^{n-1}.
\]
\end{theorem}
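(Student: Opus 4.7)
The plan is to deduce the identification from the Gelfand--MacPherson correspondence, and then match semistability with weighted stability in Hassett's sense. First, I would recall that $\mathrm{Gr}(2,n)$ embedded in $\Pp^{\binom{n}{2}-1}$ via Pl\"ucker coordinates carries a natural action of $(\Cc^\times)^n$ acting on the $n$ standard basis vectors of $\Cc^n$; since the diagonal $\Cc^\times$ acts by scaling the Pl\"ucker coordinates uniformly, the effective quotient torus is $T = (\Cc^\times)^n/\Cc^\times \cong (\Cc^\times)^{n-1}$. A character $\alpha$ of this quotient torus corresponds to an $n$-tuple in $\Qq^n$ with $\sum \alpha_i$ constrained (here $\sum \alpha_i = 2$ precisely so that $\alpha$ factors through the quotient by the diagonal scaling of the hyperplane class).

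The second step is to invoke the Gelfand--MacPherson isomorphism
\[
    \mathrm{Gr}(2,n) \sslash_\alpha T \;\cong\; (\Pp^1)^n \sslash_\alpha \mathrm{PGL}_2,
\]
where on the right-hand side the polarization is the tensor product $\OO(\alpha_1)\boxtimes\cdots\boxtimes\OO(\alpha_n)$. This identifies a point of the GIT quotient with an equivalence class of $n$-pointed rational curves, which is exactly the setup of $\overline{M}_{0,\beta}$.

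Next, I would analyze $\alpha$-semistability on $(\Pp^1)^n$ via the Hilbert--Mumford numerical criterion. A one-parameter subgroup of $\mathrm{PGL}_2$ fixes two points of $\Pp^1$, and a configuration $(P_1,\dotsc,P_n)$ is $\alpha$-semistable (resp.\ stable) if and only if for every point $q \in \Pp^1$, the sum $\sum_{P_i = q}\alpha_i \le 1$ (resp.\ $<1$). Because $\sum\alpha_i = 2$, this is symmetric: stability fails precisely when some subset of weight exactly $1$ collides. One now shows that $\alpha$-semistable configurations (modulo closure of orbits and attaching bubble trees to resolve strictly semistable collisions) are exactly the $\beta$-weighted stable rational curves: condition (1) of the Hassett definition is the Hilbert--Mumford inequality, and condition (2), the ampleness $K_C + \sum\beta_i P_i > 0$ on each component, matches the requirement that bubble components carry weight strictly greater than $1$ on their internal markings plus the node. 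The boundary hypothesis $\alpha \in \partial\mathrm{Ch}(\beta)$ pins down which coincidences give honest boundary points of the compactification versus which get contracted.

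The main obstacle is step three: carefully matching the GIT boundary strata with Hassett's boundary. Specifically, one must check that the strictly semistable locus, where some subset $I$ with $\sum_{i\in I}\alpha_i = 1$ coincides, corresponds under orbit-closure identification to the Hassett contraction that reduces a $\beta$-bubble to a coincident cluster whenever $\sum_{i\in I}\beta_i \le 1$. This requires that $\alpha$ lies on exactly one boundary wall of $\mathrm{Ch}(\beta)$, since multiple walls would introduce additional collapses. Once this dictionary is established, the universal property of $\overline{M}_{0,\beta}$ furnishes the isomorphism, completing the proof. (This is, of course, the content of \cite[Theorem 1.5]{Alexeev}, which we are quoting.)
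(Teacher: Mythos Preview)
The paper does not actually prove this statement; it is quoted verbatim as \cite[Theorem 1.5]{Alexeev} and used as a black box to deduce Corollary~\ref{cor-hassett}. Your sketch via the Gelfand--MacPherson correspondence and the Hilbert--Mumford criterion is a reasonable outline of how the cited result is established, and you correctly flag at the end that you are merely recapitulating Alexeev's argument rather than supplying a new one.
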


\begin{corollary}\label{cor-hassett}
In the situation of the preceding theorem, $\overline{M}_{0,\beta}$ has a well-poised embedding.
\end{corollary}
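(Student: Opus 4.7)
The plan is to invoke Theorem \ref{thm-quotients of wellpoised} directly, using the presentation of $\overline{M}_{0,\beta}$ as a torus GIT quotient of the Grassmannian furnished by Theorem \ref{thm-Alexeev}. The two inputs required are (i) the well-poisedness of the Pl\"ucker embedding of $\mathrm{Gr}(2,n) \subseteq \Pp^{\binom{n}{2}-1}$, which is the main theorem of \cite{Speyer-Sturmfels}, and (ii) verification of the hypotheses of Theorem \ref{thm-quotients of wellpoised} for the Pl\"ucker coordinate ring.

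For (ii), I would first note that the Grassmannian is projectively normal in the Pl\"ucker embedding, so its affine cone $\widehat{\mathrm{Gr}}(2,n)$ is a normal variety. The Pl\"ucker ring $R = \Cc[p_{ij}]/I_{2,n}$ carries the standard $M = \Zz^n$-grading with $\deg(p_{ij}) = e_i + e_j$, coming from the natural action of the diagonal maximal torus of $GL_n$; under this grading $R_0 = \Cc$ and each $R_\alpha$ is finite-dimensional since $R$ is a quotient of a polynomial ring by a homogeneous ideal. The corresponding quasi-torus $T = \Hom(\Zz^n, \Cc^\times) = (\Cc^\times)^n$ acts effectively on $\widehat{\mathrm{Gr}}(2,n)$, and modding out by the diagonal one-parameter subgroup (the Pl\"ucker scaling that produces the projective Grassmannian) yields the $(n-1)$-dimensional torus of Theorem \ref{thm-Alexeev}.

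With all the hypotheses verified, I would apply Theorem \ref{thm-quotients of wellpoised} to $\widehat{\mathrm{Gr}}(2,n)$ with an appropriate lift $\widetilde{\alpha} \in \Zz^n$ of the character $\alpha$ of Theorem \ref{thm-Alexeev}. This immediately produces a well-poised embedding of
\[
\widehat{\mathrm{Gr}}(2,n) \sslash_{\widetilde{\alpha}} (\Cc^\times)^n \;=\; \mathrm{Gr}(2,n) \sslash_\alpha (\Cc^\times)^{n-1} \;=\; \overline{M}_{0,\beta},
\]
as desired. There is no substantive obstacle beyond this bookkeeping: the real content has already been absorbed into Theorem \ref{thm-quotients of wellpoised} and the Speyer--Sturmfels result, and the only thing to check is that the action of $(\Cc^\times)^{n-1}$ appearing in Alexeev's quotient construction is the one induced by the Pl\"ucker grading, which is standard.
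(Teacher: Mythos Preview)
Your proposal is correct and follows the same approach as the paper: the paper's proof is a two-line invocation of Theorem \ref{thm-quotients of wellpoised} together with the Speyer--Sturmfels well-poisedness of $\mathrm{Gr}(2,n)$, and you have simply spelled out the hypothesis-checking (normality of the affine cone, the $\Zz^n$-grading, finite-dimensional graded pieces, $R_0 = \Cc$) that the paper leaves implicit.
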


\begin{proof}
Since $\mathrm{Gr}(2,n)$ is well-poised, and $\overline{M}_{0,\beta}$ is a GIT quotient of the Grassmannian by Theorem \ref{thm-Alexeev}, we can apply Theorem \ref{thm-quotients of wellpoised}.
\end{proof}

In \cite[Theorem 1.2]{Maclagan-Gibney}, Maclagan and Gibney find an ideal $I_{\overline{M}_{0,n}}$ that cuts out $\overline{M}_{0,n}$ in the Cox ring, $S$, of some toric variety $X_\Delta$. The generators of $I_{\overline{M}_{0,n}}$ (up to saturation by the product of the variables of $S$) arise from the Pl{\"u}cker relations coming from $\mathrm{Gr}(2,n)$. It follows that $\overline{M}_{0,n}$ is the GIT quotient, $V(I_{\overline{M}_{0,n}}) \sslash_\alpha H$ where $H = \mathrm{Hom}(\mathrm{Cl}(X_\Delta), \Cc^\times)$ and $\alpha \in \mathrm{Cl}(X_\Delta)$ is some suitable character. With all this in mind, we ask the following question.

\begin{question}
Is Maclagan and Gibney's embedding of $\overline{M}_{0,n}$ or some Veronese of it described above well-poised? Or equivalently, in view of Theorem \ref{thm-quotients of wellpoised}, is $I_{\overline{M}_{0,n}}$ a well-poised ideal?
\end{question}

%%%%%%%%%%%%%%%%%%%%%%%%%%
%  Quot WP => T-variety WP
%%%%%%%%%%%%%%%%%%%%%%%%%%

\section{Well-poised $T$-varieties}\label{sec-wpTvars}

In this section, we study the tropicalizations of affine $T$-varieties whose Chow quotients are projectively normal. Given such a variety $X$, we provide an embedding of $X \subseteq \Aa^n$ which depends on its polyhedral divisor $\D$ and an embedding of its base $Y \subseteq \Pp^m$. We show that under this embedding $\Trop(X) \isom \Trop(Y) \times N_\Rr$, and we prove Theorem \ref{thm-technical}.

%%%%%%%%%%%%%%%%%%%%%%%%%%
% Semi-canonical Embeddings
%%%%%%%%%%%%%%%%%%%%%%%%%%

\subsection{Semi-canonical embeddings}\label{subsec-semi-canonical}

In \cite{Ilten-Manon}, the authors consider \textit{semi-canonical embeddings} of affine rational complexity 1 $T$-varieties. Our first goal is to generalize these embeddings to $T$-varieties whose base is not necessarily $\Pp^1$. 

An affine rational complexity 1 $T$-variety, $X$, is built from a polyhedral divisor on $\Pp^1$ of the form $\D = \Delta_0 \otimes P_0 + \dotsc + \Delta_m \otimes P_m$ where each $P_i$ is a distinct point on $\Pp^1$ and the polyhedra have a common tail-cone $\sigma$ so that $\sum_i \Delta_i \subsetneq \sigma$. There is a linear embedding $\eta : \Pp^1 \to \Pp^m$ so that the pullback of each torus-invariant divisor, $H_i$, on $\Pp^m$ is $P_i$. The torus-invariant polyhedral divisor $\D_\text{toric} = \sum_{i=0}^m \Delta_i \otimes H_i$ on $\Pp^m$ produces a toric variety $X(\D_\text{toric})$. There is an embedding of $X$ into $X(\D_\text{toric})$ induced by the maps given by restricting sections to $\eta(\Pp^1)$:
\[
    H^0(\Pp^m, \OO(\D_\text{toric}(u))) \to H^0(\Pp^1, \OO(\D(u))).
\]
This is the semi-canonical embedding of a rational complexity 1 $T$-variety.

Moving away from the rational complexity 1 case, assume we have an affine $T$-variety, $X$, with polyhedral divisor $\D = \sum_{i=0}^m \Delta_i \otimes D_i$ on $Y$. Moreover, assume that we have a projectively normal embedding $\eta : Y \to \Pp^m$ so that $\eta^*(H_i) = D_i$ for each $i$ and that $\sum_i \Delta_i \subsetneq \sigma$. Let $\D_\text{toric} = \sum_{i=0}^m \Delta_i \otimes H_i$. The first condition guarantees that the restriction maps
\[
    H^0(\Pp^m, \OO(\D_\text{toric}(u))) \to H^0(Y, \OO(\D(u)))
\]
are surjective for each $u \in \sigma^\vee \cap M$. The second condition implies that $\D$ is a proper polyhedral divisor as semi-amplitude, bigness, and the Cartier property are all preserved under pullbacks. In the language of \cite{Altman-Hausen}, we say that $\D$ is a \textit{polyhedral pullback} of $\D_\text{toric}$.

These restriction maps piece together to give a presentation of $R(\D)$ by the ring below.
\[
R(\D_\text{toric}) := \bigoplus_{u \in \sigma^\vee \cap M} H^0(\Pp^m, \OO(\D_\text{toric}(u))) \cdot \chi^u
\]
The next proposition shows that this ring is a saturated semigroup algebra, and we give its corresponding rational polyhedral cone.

\begin{proposition} \label{Ambient Toric Variety Prop}
Let $\D = \sum_{i=0}^m \Delta_i \otimes D_i$ be a polyhedral divisor on $Y$ as above. Define $\delta \subseteq \Rr^m \times N_\Rr$ to be the cone obtained by taking the positive hull of
\[
    (\{{\bf 0}\}\times\sigma) \cup \bigcup_{i=0}^m \{e_i\} \times \Delta_i
\]
where $e_0 = - \sum_{i=1} e_i$ and $\{e_1,\dotsc,e_m\}$ is the standard basis for $\Rr^m$. Then $R(\D_\text{toric})$ and $\Cc[\delta^\vee \cap (\Zz^m \times M)]$ are isomorphic as semigroup algebras. In particular, $X(\D_\text{toric})$ is a normal affine toric variety.
\end{proposition}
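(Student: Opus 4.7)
The plan is to write down an explicit monomial basis of each graded piece of $R(\D_\text{toric})$, identify this basis with the lattice points of $\delta^\vee$, and verify multiplicative compatibility, yielding the semigroup algebra isomorphism directly.

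First, I would recall that $\Pp^m$ is the toric variety whose fan lives in a lattice canonically identified with $\Zz^m$, with rays generated by $e_0 = -\sum_{i=1}^m e_i$ and the standard basis $e_1, \dotsc, e_m$, so that $H_i$ is the torus-invariant prime divisor corresponding to the ray through $e_i$. The standard formula for global sections of torus-invariant $\Qq$-divisors then gives, for each $u \in \sigma^\vee \cap M$, a monomial basis
\[
H^0\bigl(\Pp^m, \OO(\lfloor \D_\text{toric}(u) \rfloor)\bigr) = \bigoplus_{\substack{w \in \Zz^m \\ \langle w, e_i \rangle \geq -\lfloor \Delta_i(u) \rfloor \, \forall i}} \Cc \cdot \chi^w.
\]
Because $\langle w, e_i \rangle$ is an integer whenever $w \in \Zz^m$, the floor is superfluous, and each defining inequality may be rewritten as $\langle w, e_i \rangle + \Delta_i(u) \geq 0$.

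Next, I would encode each basis element $\chi^w \cdot \chi^u$ of $R(\D_\text{toric})$ as the pair $(w, u) \in \Zz^m \times M$. Using $\Delta_i(u) = \min_{v \in \Delta_i} \langle u, v \rangle$, the inequality $\langle w, e_i \rangle + \Delta_i(u) \geq 0$ rewrites as $\langle (w, u), (e_i, v) \rangle \geq 0$ for every $v \in \Delta_i$, while $u \in \sigma^\vee$ rewrites as $\langle (w, u), (0, v') \rangle \geq 0$ for every $v' \in \sigma$. Together, these are exactly the inequalities cutting out $\delta^\vee$ from the generators of $\delta$ listed in the statement, so the assignment $(w, u) \mapsto \chi^w \cdot \chi^u$ is a bijection from $\delta^\vee \cap (\Zz^m \times M)$ onto a $\Cc$-basis of $R(\D_\text{toric})$.

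Multiplicativity is essentially automatic: the product in $R(\D_\text{toric})$ is $(\chi^w \cdot \chi^u)(\chi^{w'} \cdot \chi^{u'}) = \chi^{w + w'} \cdot \chi^{u + u'}$, matching the semigroup addition $(w, u) + (w', u') = (w + w', u + u')$, and closure of the multiplication on graded pieces follows from the convexity $\Delta_i(u + u') \geq \Delta_i(u) + \Delta_i(u')$. For the ``in particular'' assertion, I would note that $\delta$ is the rational polyhedral cone finitely generated by the $(e_i, \pi)$ with $\pi$ a vertex of $\Delta_i$ together with $(0, r)$ for $r$ a ray generator of $\sigma$; hence $\delta^\vee \cap (\Zz^m \times M)$ is a saturated affine semigroup by Gordan's lemma and its algebra is a normal affine toric variety. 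The only real obstacle I expect is bookkeeping: keeping the sign conventions and the identification of $\Zz^m$ with the character and cocharacter data of $\Pp^m$ straight so that the dualization in the middle step proceeds cleanly.
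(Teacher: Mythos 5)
Your proposal is correct and follows essentially the same route as the paper: identify each graded piece $H^0(\Pp^m,\OO(\lfloor\D_\text{toric}(u)\rfloor))$ with the lattice points of the polytope cut out by $\langle w,e_i\rangle\geq-\lfloor\Delta_i(u)\rfloor$, and then check that the union of these conditions over $u\in\sigma^\vee\cap M$ is exactly the dualization of the generating set of $\delta$ (the paper dualizes against the vertices of the $\Delta_i$ and ray generators of $\sigma$, you against all points of $\Delta_i$ and $\sigma$, which is equivalent). Your observation that the floors are superfluous by integrality of $\langle w,e_i\rangle$, and your explicit appeal to Gordan's lemma for the final normality claim, are harmless refinements of the same argument.
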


\begin{proof}
Consider the direct sum decomposition of $R(\D_\text{toric}) = \bigoplus_{u \in \sigma^\vee \cap M} H^0(\Pp^m, \OO(\D_\text{toric}(u))) \cdot \chi^u$. Note that for every $u \in \sigma^\vee \cap M$, $\D_\text{toric}(u)$ is $T_{\Pp^m}$-invariant, and there is an associated polyhedron whose lattice points correspond to a monomial basis of $\D_\text{toric}(u)$. We will denote this polyhedron by $P_u$. The direct decomposition can be refined to get 
\[
    R(\D_\text{toric}) = \bigoplus_{(v,u) \in S} \mathrm{span}_\Cc \{t^v\cdot \chi^u\}
\]
where $S = \{(v,u) \in \Zz^m \times M ~|~ u \in \sigma^\vee \text{ and }v \in P_u\}$. This shows that $X(\D_\text{toric})$ is an affine toric variety, so it remains to show that $S = \delta^\vee \cap (\Zz^m \times M)$.

If $\D_\text{toric}(u) = \sum_{i=0}^m \lfloor \Delta_i(u)\rfloor H_i$, recall that $P_u$ is defined by the inequalities
\[
    \langle v, e_i \rangle \geq -\lfloor\Delta_i(u)\rfloor
\]
for all $i = 0,\dotsc, m$. The ray generators of $\delta$ come in one of two types: they are either a multiple of $(e_i, w_{ij})$ where $w_{ij}$ is a vertex of $\Delta_i$ or they are of the form $(0, \rho)$ where $\rho$ is a ray generator of $\sigma$. It follows that $(v,u) \in \delta^\vee$ if and ond only if $\langle (e_i, w_{ij}), (v,u) \rangle \geq 0$ and $\langle (0,\rho), (v,u) \rangle \geq 0$ for all possible $\rho$ and $(e_i, w_{ij})$.

\begin{align*}
(v,u) \in \delta^\vee \cap (\Zz^m \times M) \iff& \langle (e_i, w_{ij}), (v,u) \rangle \geq 0 \text{ and } \langle (0,\rho), (v,u) \rangle \geq 0 \\
				\iff& v_i \geq - \lfloor\langle w_{ij}, u\rangle\rfloor  \text{ for all }i,j \text{ and } u \in \sigma^\vee \cap M\\
				\iff& v_i \geq -\mathrm{min}_j\{ \lfloor\langle w_{ij},u\rangle\rfloor\} \text{ for all }i \text{ and } u \in \sigma^\vee \cap M\\
				\iff& v_i \geq - \lfloor\Delta_i(u)\rfloor \text{ for all }i \text{ and } u \in \sigma^\vee \cap M\\
				\iff& v \in P_u \text{ and }u \in \sigma^\vee \cap M\\
				\iff& (v,u) \in S
\end{align*}
Thus, $X(\D_\text{toric})$ is an affine normal toric variety. 
\end{proof}

\noindent Below is the main definition of this section.

\begin{definition}\label{def-semicanonical}
Let $\eta : Y \to \Pp^m$ be a projectively normal embedding, and let $\D$ be a polyhedral pullback along $\eta$ of a torus-invariant polyhedral divisor $\D_\text{toric}$ on $\Pp^m$. Let $\delta$ be as in Proposition \ref{Ambient Toric Variety Prop}, and let $\mathcal{H}$ be a Hilbert basis for the dual cone. Then the embedding, $X(\D) \subseteq \Aa^{\#\mathcal{H}}$ induced by the following composition of surjective maps is said to be \textit{semi-canonical}.
\[
    \Cc[x_{(h,h')} ~|~ (h,h') \in \mathcal{H}] \to R(\D_\text{toric}) \to R(\D)
\]
The first map is given by $x_{(h,h')} \mapsto t^h \chi^{h'}$, and the second is given by restricting sections. The images of $x_{(h,h')}$ in $R(\D)$ are the \textit{semi-canonical generators of} $R(\D)$.
\end{definition}

The kernel of $\Cc[x_h ~|~ h \in \mathcal{H}] \to R(\D)$ will be denoted by $J_\D$ and the kernel of $R(\D_\text{toric}) \to R(\D)$ by $I_\D$. The remainder of this subsection is devoted to describing a generating set for the ideal $I_\D$. The description depends on a choice of generators of the vanishing ideal for the very affine variety $Y^\circ := Y \cap T_{\Pp^m}$ and the polyhedral coefficients of $\D$. The vanishing ideal for $Y^\circ$ will be denoted by $I_Y$. Here is a first pass at a set of generators for $I_\D$.

\begin{lemma}\label{first gen set}
Let $I_\D$ and $I_Y$ be as above. Then $I_\D = \langle gt^v \cdot \chi^u ~|~ g \in I_Y \text{ and } gt^v \cdot \chi^u \in R(\D_\text{toric})\rangle$. Geometrically, this means that $X(\D)$ is the Zariski closure of $Y^\circ \times T_{\Pp^m}$ in $X(\D_\text{toric})$.
\end{lemma}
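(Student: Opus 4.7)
The plan is to exploit the $M$-grading on both $R(\D_\mathrm{toric})$ and $R(\D)$. Because the quotient map is assembled from the individual restriction maps on sections $H^0(\Pp^m, \OO(\D_\mathrm{toric}(u))) \twoheadrightarrow H^0(Y, \OO(\D(u)))$, it is $M$-graded, and hence so is its kernel $I_\D$. It therefore suffices to check that every $M$-homogeneous element of $I_\D$ already appears among the proposed generators.

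To pin down the $u$-th homogeneous piece of $I_\D$, I trivialize the torus-invariant line bundle $\OO(\D_\mathrm{toric}(u))$ on the dense torus $T_{\Pp^m} \subseteq \Pp^m$. This identifies $H^0(\Pp^m, \OO(\D_\mathrm{toric}(u)))$ with the finite-dimensional subspace $V_u \subseteq \Cc[T_{\Pp^m}] = \Cc[\Zz^m]$ spanned by $\{t^v : v \in P_u \cap \Zz^m\}$, where $P_u$ is the lattice polytope appearing in the proof of Proposition \ref{Ambient Toric Variety Prop}. Since $\D$ is a polyhedral pullback of $\D_\mathrm{toric}$ along $\eta$, one can trivialize $\OO(\D(u))$ on $Y^\circ$ compatibly, so that the restriction of sections becomes the composition $V_u \hookrightarrow \Cc[\Zz^m] \twoheadrightarrow \Cc[\Zz^m]/I_Y = \Cc[Y^\circ]$. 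Its kernel is therefore $V_u \cap I_Y$, giving $(I_\D)_u = (V_u \cap I_Y)\chi^u$.

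Both inclusions in the claimed equality now follow at once. For $\supseteq$, each generator $gt^v\chi^u$ with $g \in I_Y$ has $g$ vanishing on $Y^\circ$, so the restriction map annihilates it. For $\subseteq$, any homogeneous $f\chi^u \in I_\D$ has $f \in V_u \cap I_Y$; setting $g := f$ and $v := 0$ exhibits $f\chi^u$ itself as one of the proposed generators (the containment $f\chi^u \in R(\D_\mathrm{toric})$ being automatic from $f \in V_u$). The geometric interpretation follows from these: $X(\D)$ is irreducible, and the open torus $T_\mathrm{amb} \subseteq X(\D_\mathrm{toric})$ intersects $X(\D)$ in $Y^\circ \times T$, which is therefore dense in $X(\D)$. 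The only mildly delicate step is the identification in the second paragraph, where one must check that the trivializations on $\Pp^m$ and $Y$ induced by $\eta$ are compatible enough that the restriction map really becomes the quotient by $I_Y$, rather than by some larger extension ideal.
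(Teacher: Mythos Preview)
Your proposal is correct and follows essentially the same approach as the paper: both exploit the $M$-grading, trivialize $\OO(\D_\mathrm{toric}(u))$ on the dense torus so that sections become Laurent polynomials, and identify the $u$-th graded piece of $I_\D$ with those sections lying in $I_Y$. The paper phrases the backward inclusion elementwise (writing a homogeneous element as $gt^v\chi^u$ and using invertibility of $t^v$ on $Y^\circ$ to conclude $g\in I_Y$), whereas you package this as the single identification $(I_\D)_u=(V_u\cap I_Y)\chi^u$ and then take $v=0$; the content is the same.
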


\begin{proof}
Suppose $g \in I_Y$ and $gt^v \cdot \chi^u \in R(\D_\text{toric})$. Note that $gt^v \in H^0(\Pp^m, \OO(\D_\text{toric}(u)))$. Since the divisor $\D_\text{toric}(u)$ is supported on the complement of $T_{\Pp^m}$, we see that $gt^v$ must be a regular function on $Y^\circ$. Moreover, as $g$ is in $I_Y$, $gt^v$ is in $I_Y$ as well, so $gt^v$ vanishes on $Y^\circ$. Since $Y^\circ$ is dense in $Y$, $gt^v$ vanishes on all of $Y$. It follows then that $gt^v\cdot \chi^u$ is in $I_\D$.

For the other inclusion, start by taking any \(M\)-homogeneous element of $I_\D$, say $gt^v\cdot\chi^u$. Then, we must show that \(g \in I\). Note \(gt^v\) is a global section of $\OO(\D_\text{toric}(u))$, so it is regular on $T_{\Pp^m}$ since $\D_\text{toric}(u)$ is torus-invariant. By definition of $I_\D$, we know that the restrction of $gt^v$ to $Y$ is 0. On $Y^\circ$, $t^v$ is invertible; hence, the restriction of $g$ to $Y^\circ$ vanishes as well, so $g \in I_Y$.
\end{proof}

Lemma \ref{first gen set} gives some geometric intuition; however, the given list of generators is not finite. The next pass at the ideal distinguishes a finite generating set for $I_\D$. Begin with a generating set $\mathcal{G}$ for $I_Y \cap \Cc[t_1,\dotsc, t_m]$ which contains a reduced Gr\"obner basis with respect to any term order which refines total degree (see \cite{Cox-Little-O'Shea} for a definition of a Gr\"obner basis) and is minimal in the sense that if \(g\) is in \(\mathcal{G}\), then \(g/t_i\) is not in $I_Y \cap \Cc[t_1,\dotsc, t_m]$ for each \(i\). For each \(g\) in \(\mathcal{G}\), let \(M_g := \deg(g)\) and define the \textit{degree polyhedron} \(P_g \subseteq \Rr^m \times N_\Rr\) by the following inequalities.
\begin{align*}
    v_i &\geq -\Delta_i(u) \;\;\; \text{for all } i = 1,\dotsc, m\\
    \sum_{i=1}^m v_i &\leq \Delta_0(u) - M_g
\end{align*}
Let \(\mathcal{P}_g\) be a \(\delta^\vee\)-module generating set for the polyhedron \(P_g\). This can be computed by finding a Hilbert basis of the positive hull of \(P_g \times \{1\}\), taking those members of the basis at height 1, and projecting them back to \(\Zz^m \times M\).

\begin{proposition}\label{finite generating set}
The set 
\[ 
\mathcal{S} = \bigcup_{g \in \mathcal{G}} \left\{ gt^v\cdot \chi^u \st (v,u) \in \mathcal{P}_g\right\}
\]
is a finite generating set for \(I_\D\).
\end{proposition}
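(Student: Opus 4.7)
The strategy splits into three parts: (i) $\mathcal{S}$ is finite; (ii) $\mathcal{S} \subseteq I_\D$; (iii) $\mathcal{S}$ generates $I_\D$ as an ideal in $R(\D_\text{toric})$. Finiteness is immediate: $\mathcal{G}$ is finite since it contains a reduced Gr\"obner basis of $I_Y \cap \Cc[t_1,\ldots,t_m]$ in a Noetherian ring, and each $\mathcal{P}_g$ is finite by Gordan's lemma applied to the rational polyhedral cone $\mathrm{pos}(P_g \times \{1\})$, whose lattice points at height one project back to generators of $P_g \cap (\Zz^m \times M)$ as a $\delta^\vee$-module.

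For (ii), I would unpack membership in $R(\D_\text{toric}) \cong \Cc[\delta^\vee \cap (\Zz^m \times M)]$ monomial-by-monomial via Proposition \ref{Ambient Toric Variety Prop}. Writing $g = \sum_\alpha c_\alpha t^\alpha$, the element $gt^v \chi^u$ lies in $R(\D_\text{toric})$ iff every monomial $c_\alpha t^{v+\alpha}\chi^u$ satisfies $v_i + \alpha_i \geq -\lfloor \Delta_i(u)\rfloor$ for $i=1,\ldots,m$ and $\sum(v_i+\alpha_i) \leq \lfloor \Delta_0(u)\rfloor$. The minimality condition $g/t_i \notin I_Y$ forces $g$ to contain a monomial with $\alpha_i = 0$ for each $i$, and since $\deg(g) = M_g$, the tightest versions of these inequalities read precisely $v_i \geq -\Delta_i(u)$ and $\sum v_i \leq \Delta_0(u) - M_g$---exactly the defining inequalities of $P_g$. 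Hence $(v,u) \in \mathcal{P}_g \subseteq P_g$ yields $gt^v\chi^u \in R(\D_\text{toric})$, and since $g \in I_Y$, Lemma \ref{first gen set} gives $gt^v\chi^u \in I_\D$.

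For (iii), let $f = \sigma\chi^u \in I_\D$ be $M$-homogeneous. Then $\sigma$ is a Laurent polynomial in $I_Y$ with support contained in the section polyhedron $P_u$ of $\D_\text{toric}(u)$. Multiply by a monomial $t^w$ so that $h := t^w \sigma \in I_Y \cap \Cc[t_1,\ldots,t_m]$, and apply Gr\"obner division by $\mathcal{G}$ relative to a term order refining total degree:
\[
h = \sum_{g \in \mathcal{G}} q_g\, g, \quad q_g \in \Cc[t_1,\ldots,t_m], \quad \deg(q_g g) \leq \deg(h).
\]
Expanding $q_g = \sum_\beta c_{g,\beta} t^\beta$ and dividing back by $t^w$ yields
\[
f = \sum_g \sum_\beta c_{g,\beta}\, g\, t^{\beta-w}\,\chi^u.
\]
The plan is to show that each summand $gt^{\beta-w}\chi^u$ lies in $R(\D_\text{toric})$, equivalently $(\beta - w, u) \in P_g$. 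The total-degree bound combined with $\deg(h) \leq \sum w_i + \lfloor \Delta_0(u)\rfloor$ (from the support of $\sigma$ in $P_u$) produces $\sum (\beta_i - w_i) \leq \Delta_0(u) - M_g$. The coordinate-wise bounds $(\beta-w)_i \geq -\Delta_i(u)$ are delivered by choosing $w$ minimally (so that $w_i = \max(0, \lfloor \Delta_i(u) \rfloor)$) and using the minimality of $\mathcal{G}$ to control the lowest-exponent monomial of each $g$ in coordinate $i$. Once every summand lies in $R(\D_\text{toric})$, the defining property of $\mathcal{P}_g$ as a $\delta^\vee \cap (\Zz^m \times M)$-module generating set of $P_g \cap (\Zz^m\times M)$ expresses each $gt^{\beta-w}\chi^u$ as an $R(\D_\text{toric})$-multiple of an element of $\mathcal{S}$.

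The main obstacle is the coordinate-wise verification in (iii). Gr\"obner division cleanly controls the single total-degree constraint, but it does not directly bound the individual exponents of the $q_g$'s, and naive cancellations between terms coming from different $g$'s could in principle produce a valid sum $f \in R(\D_\text{toric})$ from summands that individually violate the inequalities $(\beta-w)_i \geq -\Delta_i(u)$. Resolving this requires leveraging both the minimality of $\mathcal{G}$, which pins down the lowest-exponent monomial in each coordinate of every $g$, and a suitable choice of the shift $w$, to ensure that every intermediate polynomial stays compatible with the support of $\sigma$ in all coordinates simultaneously.
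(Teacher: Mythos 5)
Your overall strategy --- reduce to $M$-homogeneous elements, clear denominators by a monomial shift, divide by the degree-compatible Gr\"obner basis inside $\mathcal{G}$, and finish with the $\delta^\vee$-module generation property of $\mathcal{P}_g$ --- is exactly the paper's strategy. But the step you yourself flag as the main obstacle, namely the coordinate-wise bounds $(\beta-w)_i \geq -\lfloor\Delta_i(u)\rfloor$ for each individual summand, is left unresolved, and the tools you propose for closing it (minimality of $\mathcal{G}$, control of lowest-exponent monomials, worries about cancellation among different $g$'s) are not what is needed. The gap is real as written: with your shift $w_i = \max(0,\lfloor\Delta_i(u)\rfloor)$, whenever $\lfloor\Delta_i(u)\rfloor < 0$ the required bound $\beta_i \geq -\lfloor\Delta_i(u)\rfloor > 0$ does not follow from the division algorithm, which only guarantees $\beta_i \geq 0$.

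The fix is to choose the shift to be $w_i = \lfloor\Delta_i(u)\rfloor$ for every $i$, i.e.\ to factor out the Laurent monomial $g_u = t_1^{-\lfloor\Delta_1(u)\rfloor}\cdots t_m^{-\lfloor\Delta_m(u)\rfloor}$ corresponding to the vertex of the section polytope $P_u$ at which all $m$ coordinate constraints are tight. This gives the exact identity $R(\D_\text{toric})_u = g_u\cdot\chi^u\cdot\Cc[t_1,\dotsc,t_m]_{\leq d_u}$ with $d_u = \sum_{i=0}^m\lfloor\Delta_i(u)\rfloor$: membership of $t^v\chi^u$ in $R(\D_\text{toric})$ becomes equivalent to ``$t^{v+w}$ is an honest monomial of total degree at most $d_u$.'' All $m$ lower bounds are thereby absorbed into the statement that the quotients $q_g$ lie in the polynomial ring $\Cc[t_1,\dotsc,t_m]$, which the division algorithm gives for free; the only constraint that actually uses the degree-refining term order is the single bound $\deg(q_g g) \leq \deg(h) \leq d_u$. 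In particular each summand $q_g\, g\, g_u\,\chi^u$ lies in $R(\D_\text{toric})$ term by term, so no cancellation issue arises, and the minimality of $\mathcal{G}$ plays no role in this step (it is used only in Lemma \ref{degree polyhedron lemma}, to identify $P_g$ with the set of $(v,u)$ for which $gt^v\chi^u \in R(\D_\text{toric})$). With that one correction, the rest of your argument, including the final appeal to $\mathcal{P}_g$, coincides with the paper's proof.
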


Before proving Proposition \ref{finite generating set}, we need the following lemma. It shows that for $g\in \mathcal{G}$, the degree polyhedron of $g$ is precisely the set of lattice points $(v,u)$ so that $gt^v \cdot \chi^u \in R(\D_\text{toric})$.

\begin{lemma}\label{degree polyhedron lemma}
Let \(\mathcal{G}\) be as above, and let \(g = \sum_{\alpha\in \Zz^m} c_\alpha t^\alpha \in \mathcal{G}\). Then \((v,u) \in P_g\) if and only if for each nonzero \(c_\alpha\), \(t^{\alpha + v} \cdot \chi^u \in R(\D_\text{toric})\). In particular, \(gt^v\cdot\chi^u \in I_\D\) if and only if \((v,u) \in P_g\). 
\end{lemma}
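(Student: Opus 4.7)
The plan is to translate both sides of the equivalence into explicit inequalities using the semigroup description of $R(\D_\text{toric})$ from Proposition \ref{Ambient Toric Variety Prop}. That proposition identifies $R(\D_\text{toric})$ with $\Cc[\delta^\vee \cap (\Zz^m \times M)]$, so for $(w,u) \in \Zz^m \times M$ one has $t^w \chi^u \in R(\D_\text{toric})$ precisely when $u \in \sigma^\vee$, $w_i \geq -\lfloor \Delta_i(u)\rfloor$ for $i=1,\dots,m$, and $\sum_i w_i \leq \lfloor \Delta_0(u)\rfloor$, the last inequality arising from the ray $(e_0, w_{0j})$ with $e_0 = -\sum_{i\geq 1}e_i$. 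Because $w$ is integral, the floors may be dropped without changing the condition. Setting $w = \alpha + v$ reduces the main equivalence to a coordinatewise comparison with the defining inequalities of $P_g$.

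For the forward direction, suppose $(v,u) \in P_g$ and fix $\alpha$ with $c_\alpha \neq 0$. Since $g \in \Cc[t_1,\dots,t_m]$ we have $\alpha_i \geq 0$ and $\sum_i \alpha_i \leq M_g$. Then $(\alpha+v)_i \geq v_i \geq -\Delta_i(u)$, and
\[
\sum_i (\alpha+v)_i \;=\; \sum_i \alpha_i + \sum_i v_i \;\leq\; M_g + \bigl(\Delta_0(u) - M_g\bigr) \;=\; \Delta_0(u),
\]
so $t^{\alpha+v}\chi^u \in R(\D_\text{toric})$.

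The reverse direction is where the minimality hypothesis on $\mathcal{G}$ does real work, and I expect this to be the main obstacle. Assume $t^{\alpha+v}\chi^u \in R(\D_\text{toric})$ for every $\alpha$ with $c_\alpha \neq 0$. For each $i \in \{1,\dots,m\}$, the condition $g/t_i \notin I_Y \cap \Cc[t_1,\dots,t_m]$ is equivalent to the existence of some $\alpha$ with $c_\alpha \neq 0$ and $\alpha_i = 0$; applied to such an $\alpha$, the membership inequality gives $v_i = (\alpha+v)_i \geq -\Delta_i(u)$. Similarly, by the definition of $M_g = \deg(g)$ there exists $\alpha$ with $c_\alpha \neq 0$ and $\sum_i \alpha_i = M_g$, so $\sum_i v_i = \sum_i (\alpha+v)_i - M_g \leq \Delta_0(u) - M_g$. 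Thus $(v,u) \in P_g$.

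For the ``in particular'' statement, I use that $R(\D_\text{toric})$ is $\Zz^m \times M$-graded. The summands $c_\alpha t^{\alpha+v}\chi^u$ indexed by distinct $\alpha$ with $c_\alpha \neq 0$ lie in distinct graded components, so $gt^v\chi^u \in R(\D_\text{toric})$ if and only if each $t^{\alpha+v}\chi^u$ does; combined with the equivalence proved above, this gives $gt^v\chi^u \in R(\D_\text{toric}) \iff (v,u) \in P_g$. Since $g \in \mathcal{G} \subseteq I_Y$, Lemma \ref{first gen set} places $gt^v\chi^u$ in $I_\D$ whenever it lies in $R(\D_\text{toric})$, while conversely $I_\D \subseteq R(\D_\text{toric})$; this completes the equivalence $gt^v\chi^u \in I_\D \iff (v,u) \in P_g$.
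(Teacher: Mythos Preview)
Your proof is correct and follows essentially the same approach as the paper's: both directions hinge on the same coordinatewise inequalities, and you invoke the minimality hypothesis on $\mathcal{G}$ in exactly the way the paper does (finding, for each $i$, a monomial with $\alpha_i = 0$, and one monomial of top total degree $M_g$). Your treatment is slightly more detailed than the paper's in justifying why the minimality condition yields such $\alpha$'s and in spelling out the ``in particular'' clause via the $\Zz^m \times M$-grading; the paper leaves these implicit.
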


\begin{proof}
Suppose \((v,u) \in P_g\) and \(c_\alpha \neq 0\). Since \(\alpha_i\) is non-negative and \((v,u) \in P_g\), for each \(i =1 ,\dotsc, m\), the following inequalities hold:
\begin{align*}
v_i + \alpha_i &\geq  v_i \\	
					&\geq -\lfloor\Delta_i(u)\rfloor.
\end{align*}
Also, since \(\sum_i \alpha_i \leq M_g\), we get the other inequality:
\begin{align*}
\sum_{i=1}^m v_i + \alpha_i &\leq (\lfloor\Delta_0(u)\rfloor - M_g) + M_g \\
					&= \lfloor\Delta_0(u)\rfloor.
\end{align*}
Thus, \((v+\alpha, u)\) is in \(\delta^\vee\), i.e. \(t^{v+\alpha}\cdot \chi^u\) is in \(R(\D_\text{toric})\).

On the other hand, suppose that \((v + \alpha, u)\) is in \(\delta^\vee\) for each non-zero \(c_\alpha\). By the assumptions on $\mathcal{G}$, for each \(i\), there must exist some \(\alpha_j\) so that the \(i^\text{th}\) coordinate is zero. Thus,
\[ 
    v_i = v_i + (\alpha_j)_i \geq -\lfloor\Delta_i(u)\rfloor. 
\]
Also, there is some \(\alpha_0\) so that sum of its coordinates is exactly \(M_g\); therefore,
\[
    \sum_{i=1}^m v_i + (\alpha_0)_i \leq \lfloor\Delta_0(u)\rfloor \iff \sum_{i=1}^r v_i \leq \lfloor\Delta_0(u)\rfloor - M_g.
\]
Thus, \((v,u)\) is in \(P_g\).
\end{proof}

\begin{proof}[Proof of \ref{finite generating set}]
Let \(u\) be in \(\sigma^\vee \cap M\). The graded pieces of \(R(\D_\text{toric})\) can be rewritten as follows:
\[
    R(\D_\text{toric})_u = g_u \cdot \chi^u \cdot \Cc[t_1,\dotsc, t_r]_{\leq d_u}
\]
where 
\[
    d_u := \sum_{i=0}^m \lfloor \Delta_i(u) \rfloor  \text{ and } g_u = t_1^{-\lfloor \Delta_1(u) \rfloor} \dots t_r^{-\lfloor \Delta_r(u) \rfloor}.
\]

Lemma \ref{first gen set} implies that \(\mathcal{S}\) is contained in \(I_\D\), so it suffices to show that \(\mathcal{S}\) generates the ideal. The ideal is \(M\)-graded, so it actually suffices to show that each homogeneous element of the ideal is generated by \(\mathcal{S}\). To this end, let \(h \cdot g_u \cdot \chi^u\) be an element of \((I_\D)_u\) where \(h\) is in \(I_Y \cap \Cc[t_1, \dotsc, t_m]_{\leq d_u}\). Since \(\mathcal{G}\) contains a Gr\"obner basis for \(I_Y \cap \Cc[t_1,\dotsc, t_m]\) with respect to a term order which refines degree, \(h\) decomposes as

\[
    h = \sum_{g\in \mathcal{G}} c_g g \text{ with } c_g g\in \Cc[t_1,\dotsc, t_m]_{\leq d_u}.
\] 

This reduces the problem to showing that \(\mathcal{S}\) generates all homogeneous elements of the form \(g t^v \cdot \chi^u\) where \(g\) is a member of \(\mathcal{G}\) and \(gt^v \cdot \chi^u\) is in \(R(\D_\text{toric})\). By Lemma \ref{degree polyhedron lemma}, \((v,u)\) is in \(P_g\). Since \(\mathcal{P}_g\) generates \(P_g\) as a \(\delta^\vee\)-module, one can write
\[
(v,u) = (v',u') + (v'',u'') \text{ with } (v',u') \in \mathcal{P}_g \text{ and } (v'',u'') \in \delta^\vee.
\]
It follows that
\[
gt^v \cdot \chi^u = (gt^{v'}\cdot \chi^{u'}) (t^{v''} \cdot \chi^{u''})
\]
and \(gt^{v'}\cdot \chi^{u'}\) is in \(\mathcal{S}\) and \(t^{v''} \cdot \chi^{u''}\) is in \(R(\D_\text{toric})\).
\end{proof}

\subsection{Tropicalizations of Semi-Canonical Embeddings}
We let $J_\D$ denote the vanishing of ideal of $X_\D$ in $\Cc[x_h ~|~ h \in \mathcal{H}]$, $I_\D$ is the vanishing ideal of $X(\D)$ in $\Cc[\delta^\vee \cap (\Zz^m \times M)]$, and $I_Y$ denotes the vanishing ideal of $Y^\circ$. In this subsection, we will describe the tropicalization of a semi-canonical embedding and state a criterion for when a prime cone in $\Trop(I_Y)$ lifts to a prime cone in $\Trop(J_\D)$. 

\begin{remark}\label{rem-cone lifting}
By Lemma \ref{first gen set}, $X(\D) \cap (T_{\Pp^m} \times T_N) = Y^\circ \times T_N$. It follows that $\Trop(I_\D) = \Trop(I_Y) \times N_\Rr$.
\end{remark}

\begin{lemma}\label{tropicalization}
\(\Trop(J_\D)\) is the image of \(\Trop(I_Y) \times N_\Rr\) under the injective linear map
\[
\Rr^m \times N_\Rr \to \Rr^{\#\mathcal{H}}
\]
given by 
\[
v \mapsto (\dotsc, \langle u,v \rangle, \dotsc)_{u \in \mathcal{H}}.
\]
\end{lemma}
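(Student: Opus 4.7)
The plan is to realize $V(J_\D)$ as the closure in $\Aa^{\#\mathcal{H}}$ of a monomial image of $Y^\circ \times T_N$, and then combine the Fundamental Theorem of Tropical Geometry with functoriality of tropicalization under monomial maps. First, I would observe that because every semi-canonical generator is a Laurent monomial, $\phi(x_{(h,h')}) = t^h\chi^{h'}$, these coordinates are simultaneously nonvanishing on $X(\D)$ exactly on the preimage $X(\D) \cap (T_{\Pp^m} \times T_N)$, which Lemma \ref{first gen set} identifies with $Y^\circ \times T_N$. Consequently $V(J_\D) \cap (\Cc^\times)^{\#\mathcal{H}}$ coincides, through the semi-canonical embedding, with the image of the monomial map
\[
\iota \colon Y^\circ \times T_N \hookrightarrow (\Cc^\times)^{\#\mathcal{H}}, \qquad (p,q) \mapsto \bigl(t^h(p)\,\chi^{h'}(q)\bigr)_{(h,h') \in \mathcal{H}},
\]
and is dense in the irreducible variety $V(J_\D)$.

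Next, I would invoke the Fundamental Theorem of Tropical Geometry to identify $\Trop(J_\D)$ with the tropicalization of this very affine open. The map $\iota$ is the restriction of a homomorphism of algebraic tori $T_{\Pp^m} \times T_N \to (\Cc^\times)^{\#\mathcal{H}}$ whose tropicalization is precisely the $\Rr$-linear map $\psi(v) = (\langle u,v\rangle)_{u\in\mathcal{H}}$ of the statement. Combining functoriality of tropicalization under monomial maps with Remark \ref{rem-cone lifting} should then yield
\[
\Trop(J_\D) \;=\; \psi\bigl(\Trop(Y^\circ \times T_N)\bigr) \;=\; \psi\bigl(\Trop(I_Y) \times N_\Rr\bigr),
\]
and no additional closure will be required, since $\psi$ is linear and the source is a finite polyhedral fan.

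Finally, the injectivity of $\psi$ reduces to showing that $\mathcal{H}$ spans $\Rr^m \times M_\Rr$ as a vector space. Since $\mathcal{H}$ is a Hilbert basis of $\delta^\vee$, it is enough that $\delta^\vee$ be full-dimensional, i.e., that $\delta$ be strongly convex; this is built into Proposition \ref{Ambient Toric Variety Prop}, where $X(\D_\text{toric})$ is identified as a normal affine toric variety. I do not expect a serious conceptual obstacle here; the only real care point is keeping $\Trop(I_\D) \subset \Rr^m \times N_\Rr$ (computed from the ambient toric embedding) straight from $\Trop(J_\D) \subset \Rr^{\#\mathcal{H}}$ (computed from the semi-canonical embedding into affine space), and using the initial geometric identification to pass between them along $\psi$.
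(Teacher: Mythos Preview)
Your proposal is correct and follows essentially the same route as the paper: identify the very affine part of $V(J_\D)$ with the monomial image of $Y^\circ\times T_N$ via Lemma~\ref{first gen set}, then apply functoriality of tropicalization under monomial maps (the paper cites \cite[Corollary~3.2.13]{Maclagan-Sturmfels}) together with Remark~\ref{rem-cone lifting}. The only point where you are slightly looser than the paper is the injectivity step: Proposition~\ref{Ambient Toric Variety Prop} does not by itself assert that $\delta$ is pointed, and the paper instead verifies this directly by a dimension count using that $\D_\text{toric}$ is proper (so $\dim X(\D_\text{toric}) = m + \operatorname{rk} M = \dim \delta^\vee$, forcing $(\delta^\vee)^\perp = 0$).
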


\begin{proof}
 We know that \(X(\D)\cap (\Cc^\times)^{\#\mathcal{H}}\) is the image of \(Y^\circ \times T_N  \subseteq T_{\Pp^m} \times T_N\) under the monomial map \(\phi : X(\D_\text{toric}) \hookrightarrow \Aa^{\#\mathcal{H}}\). Now \(\Trop(J_\D) = \Trop(\phi)(\Trop(I_Y) \times N_\Rr)\) by \cite[Corollary 3.2.13]{Maclagan-Sturmfels}. The tropicalization of \(\phi\) is exactly the linear map given above. The proof is complete once we show that it is injective. 
 
 To this end, suppose $v$ is in the kernel. This implies that $v$ pairs to 0 with every Hilbert basis element of $\delta^\vee \cap (\Zz^m \times M)$, so $v$ lies in $(\delta^\vee)^\perp$ which is the lineality space of $\delta$. Since $\D_\text{toric}$ is a proper polyhedral divisor, the dimension of $X(\D_\text{toric})$ is the sum of dimension of the base and the dimension of $T_N$ which is $m + \mathrm{rk}(M)$. On the other hand, the dimension of $X(\D_\text{toric})$ is the Krull dimension of $R(\D_\text{toric})$ which is $\dim (\delta^\vee)$. Thus, we see that 
 \begin{align*}
 \dim((\delta^\vee)^\perp)  &= \mathrm{rk}(\Zz^m \times M) - \dim(\delta^\vee) \\
                            &= m + \mathrm{rk}(M) - \dim (\delta^\vee) \\
                            &= \dim(X(\D_\text{toric})) - \dim (\delta^\vee) \\
                            &= 0.
 \end{align*}
 It follows that $\delta$ is pointed, so the kernel is trivial.
\end{proof}

\begin{lemma}
Let \(w \in\Trop(J_\D)\). Using the linear map above, we view \(w\) as a weight on the monomials of \(\Cc[\Zz^m \times M]\) or just \(\Cc[\Zz^m]\). Define 
\[
Y_w = \overline{V( \mathrm{in}_w (I_Y))} \subseteq \Pp^m.
\]
Then \(Y_w\) intersects the torus of \(\Pp^m\).
\end{lemma}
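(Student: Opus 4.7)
The plan is to trace $w$ through the identifications in Lemma~\ref{tropicalization} and then invoke the standard fact that, in a Laurent polynomial ring, an ideal is proper if and only if it contains no monomial. By Lemma~\ref{tropicalization}, the point $w \in \Trop(J_\D)$ is the image of a unique pair $(v, \nu) \in \Trop(I_Y) \times N_\Rr$ under the injective linear map $\Rr^m \times N_\Rr \to \Rr^{\#\mathcal{H}}$. Unravelling that map, viewing $w$ as a weight on $\Cc[\Zz^m]$ amounts to assigning to each Laurent monomial $t^\alpha$ the value $\langle \alpha, v \rangle$. In particular, $\mathrm{in}_w(I_Y) = \mathrm{in}_v(I_Y)$.

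The key step is now to use that $v \in \Trop(I_Y)$: by the very definition of the tropical variety, $\mathrm{in}_v(I_Y) \subseteq \Cc[\Zz^m]$ contains no monomial. Since every monomial is a unit in the Laurent polynomial ring $\Cc[\Zz^m]$, this says precisely that $\mathrm{in}_v(I_Y)$ is a proper ideal of $\Cc[\Zz^m]$. By the Nullstellensatz, its vanishing locus $V(\mathrm{in}_v(I_Y)) \subseteq T_{\Pp^m}$ is therefore non-empty.

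To conclude, note that by construction $V(\mathrm{in}_v(I_Y))$ already sits inside the torus $T_{\Pp^m}$ and is contained in the closure $Y_w = \overline{V(\mathrm{in}_w(I_Y))}$, so $Y_w \cap T_{\Pp^m}$ contains this non-empty subscheme. I do not expect any genuine obstacle here: the argument is a short formal consequence of Lemma~\ref{tropicalization} plus the definition of $\Trop(I_Y)$. The only point worth writing out carefully is the bookkeeping in the first paragraph, to make sure that the weight on $\Cc[\Zz^m]$ read off from $w$ truly coincides with the $\Rr^m$-factor of its preimage under the linear map, and hence does lie in $\Trop(I_Y)$.
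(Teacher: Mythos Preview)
Your proposal is correct and follows essentially the same route as the paper: pull $w$ back to $(v,\nu)\in\Trop(I_Y)\times N_\Rr$ via Lemma~\ref{tropicalization}, and then observe that $\mathrm{in}_v(I_Y)$ contains no monomial, so its vanishing locus in $T_{\Pp^m}$ is non-empty. The paper phrases the ``no monomial'' step as a short contradiction through $I_\D$ (a monomial in $\mathrm{in}_w(I_Y)$ would produce one in $\mathrm{in}_w(I_\D)$), whereas you invoke the definition of $\Trop(I_Y)$ directly; these are equivalent and your version is slightly cleaner.
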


\begin{proof}
Let $w' \in \Trop(J_\D)$, and let $w \in \Trop(I_\D) = \Trop(I_Y) \times N_\Rr$ be the unique weight that maps to $w'$. We need to show that \(\mathrm{in}_{w}(I_Y)\) does not contain a monomial. If it did, then there would be some \(f \in I_Y\) so that \(\mathrm{in}_w(f) = t^\alpha\), but then 
\[
\mathrm{in}_w(ft^v\cdot\chi^u) = t^{\alpha + v} \cdot \chi^u \in \mathrm{in}_w (I_\D).
\] 
This, however, contradicts that \(w \in \Trop(I_\D)\).
\end{proof}

Note that \(Y_w\) is not the vanishing locus of the initial ideal of \(\mathcal{I}(Y)\) in \(\Cc[x_0,\dotsc,x_m]\), but rather, it is found by first computing the initial ideal of $I_Y$ which lies in the Laurent polynomial ring \(\Cc[t_1^\pm,\dotsc, t_m^\pm]\) and then homogenizing.  This removes any components of the degeneration that lie in the complement of the torus. 

For any \(w\in \Trop(J_\D)\), one can compute \(\mathrm{in}_w(J_\D)\) and ask when it is prime. On the other hand, one could try to apply the semi-canonical construction with $Y_w$ replacing $Y$ and ask whether that ideal is prime. The second question is more feasible, because it is prime whenever \(Y_w\) is irreducible as it will be the vanishing ideal for a (possibly non-normal) irreducible $T$-variety. Therefore, a sufficient condition for \(\mathrm{in}_w(J_\D)\) to be prime is to require that these two operations commute and for \(Y_w\) to be irreducible. The subsequent theorem states when this can happen, and in such cases, one sees that a prime cone of \(\Trop(I_Y)\) lifts to a prime cone of \(\Trop(J_\D)\).

\begin{theorem}\label{cone-lifting}
Let \(\D\) be a polyhedral divisor on $Y$. Moreover, assume that $Y \subseteq \Pp^m$ is projectively normal and $\D$ is a polyhedral pullback of a $T_{\Pp^m}$-invariant polyhedral divisor. Fix \(w \in \Trop(J_\D)\) so the following hold.
\begin{enumerate}
    \item \(Y_w\) is irreducible.
    \item There is a generating set \(\mathcal{G} \subseteq I(Y)\) as in \ref{degree polyhedron lemma}, and the \(w\)-initial forms of \(\mathcal{G}\) must generate \(\mathrm{in}_w(I_Y)\).
    \item For any \(g \in \mathcal{G}\), the degree polyhedra of $g$ and $\mathrm{in}_w(g)$ are equal.
\end{enumerate}
In this situation, \(\mathrm{in}_w(J_\D)\) is \(J_{\D_w}\) where \(\D_w\) is the polyhedral divisor \(\sum_{i=0}^m \Delta_i \otimes H_i\vert_{Y_w}\). In particular, this ideal is prime, so the cone in $\Trop(I_Y)$ which contains the image of $w$ lifts to a prime cone in $\Trop(J_\D)$.
\end{theorem}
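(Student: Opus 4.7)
The strategy is to establish the ideal equality $\mathrm{in}_w(J_\D) = J_{\D_w}$ directly. Since $Y_w$ is irreducible by hypothesis~(1), the pullback $\D_w = \sum_i \Delta_i \otimes H_i|_{Y_w}$ is a polyhedral pullback of $\D_\text{toric}$ along $Y_w \hookrightarrow \Pp^m$, so the semi-canonical machinery of Section~\ref{subsec-semi-canonical} applies and yields an irreducible affine $T$-variety $X(\D_w)$ with prime vanishing ideal $J_{\D_w}$. It is cleaner to work one level up in the ambient toric coordinate ring and show $\mathrm{in}_w(I_\D) = I_{\D_w}$, then transfer the equality along the surjection $\pi: \Cc[x_h] \to R(\D_\text{toric})$. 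Because $\ker(\pi)$ is the toric ideal of $X(\D_\text{toric})$ and $w' = L(w)$ pulls back $w$, this toric ideal is stable under the weight $w'$, so $\mathrm{in}_{w'}(J_\D) = \pi^{-1}(\mathrm{in}_w(I_\D))$ and similarly for $\D_w$; thus the statement for $I$'s transfers to one for $J$'s.

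The key step is matching generating sets. Apply Proposition~\ref{finite generating set} to $\D$ with the generating set $\mathcal{G}$ of $I_Y \cap \Cc[t_1, \ldots, t_m]$ to obtain generators $\mathcal{S} = \{\, g t^v \chi^u \st g \in \mathcal{G},\, (v,u) \in \mathcal{P}_g\,\}$ of $I_\D$. By hypothesis~(2), the initial forms $\mathrm{in}_w(\mathcal{G})$ generate $\mathrm{in}_w(I_Y)$; combining with the irreducibility of $Y_w$ (hypothesis~(1)), this identifies $\mathrm{in}_w(\mathcal{G})$ as a generating set for the defining ideal of $Y_w \cap T_{\Pp^m}$. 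By hypothesis~(3), $\deg g = \deg \mathrm{in}_w(g)$ so that the degree polyhedra $\mathcal{P}_g = \mathcal{P}_{\mathrm{in}_w(g)}$ coincide, and applying Proposition~\ref{finite generating set} to $\D_w$ yields generators $\mathcal{S}_w = \{\, \mathrm{in}_w(g) t^v \chi^u \st g \in \mathcal{G},\, (v,u) \in \mathcal{P}_g\,\}$ for $I_{\D_w}$.

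The containment $I_{\D_w} \subseteq \mathrm{in}_w(I_\D)$ then follows from the observation that $w$ has zero pairing on the $\chi$-coordinate, so for any monomial $t^v \chi^u$
\[
    \mathrm{in}_w(g t^v \chi^u) = \mathrm{in}_w(g)\, t^v \chi^u,
\]
showing each element of $\mathcal{S}_w$ is an initial form of an element of $\mathcal{S} \subseteq I_\D$. For the reverse containment, decompose both ideals under the $M$-grading (which is respected by $w$ since $w$ acts trivially on $\chi$-exponents). In degree $u$, the graded piece of $I_\D$ is $g_u \chi^u \cdot (I_Y \cap \Cc[t]_{\leq d_u})$ in the notation of the proof of Proposition~\ref{finite generating set}, and analogously for $I_{\D_w}$. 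The required equality reduces to
\[
    \mathrm{in}_w\!\left(I_Y \cap \Cc[t]_{\leq d_u}\right) \;=\; \mathrm{in}_w(I_Y) \cap \Cc[t]_{\leq d_u},
\]
where the nontrivial $(\supseteq)$ direction asks that every element of $\mathrm{in}_w(I_Y)$ of degree $\leq d_u$ lifts back to an element of $I_Y$ of degree $\leq d_u$ whose $w$-initial form it realizes.

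The main obstacle is precisely this last point: hypothesis~(3) is the lever, since it guarantees that a degree-$d_u$ bound is preserved when passing between $g \in \mathcal{G}$ and $\mathrm{in}_w(g)$, so that a bounded-degree expression $f = \sum h_i \mathrm{in}_w(g_i) \in \mathrm{in}_w(I_Y) \cap \Cc[t]_{\leq d_u}$ can be lifted term-by-term using the Gröbner-basis refinement-of-degree hypothesis on $\mathcal{G}$ from Section~\ref{subsec-semi-canonical}. Once this compatibility is verified, the two ideals agree in every graded piece, yielding $\mathrm{in}_w(I_\D) = I_{\D_w}$, and hence after pullback $\mathrm{in}_w(J_\D) = J_{\D_w}$. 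Primality follows from irreducibility of $X(\D_w)$, and since the initial ideal of $J_\D$ is constant on the relative interior of any cone of $\Trop(J_\D)$, the entire cone of $\Trop(I_Y)\times N_\Rr$ covering $w$ lifts to a prime cone.
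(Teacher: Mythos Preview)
Your overall architecture matches the paper's: reduce to showing $\mathrm{in}_w(I_\D)=I_{\D_w}$ inside $R(\D_{\mathrm{toric}})$, use the finite generating set $\mathcal S$ of Proposition~\ref{finite generating set}, and observe that each $\mathrm{in}_w(g)\,t^v\chi^u$ is simultaneously an initial form of an element of $I_\D$ and, by hypothesis~(3), a generator of $I_{\D_w}$. That is exactly how the paper handles the inclusion $I_{\D_w}\subseteq \mathrm{in}_w(I_\D)$.

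Where you diverge is in the ``reverse containment'' $\mathrm{in}_w(I_\D)\subseteq I_{\D_w}$, and here you have inverted which direction is hard. You reduce to the degree-$u$ identity
\[
\mathrm{in}_w\bigl(I_Y\cap\Cc[t]_{\le d_u}\bigr)\;=\;\mathrm{in}_w(I_Y)\cap\Cc[t]_{\le d_u},
\]
and then declare the $\supseteq$ direction nontrivial and spend the rest of the argument lifting bounded-degree elements. But for the containment you are actually proving, namely $\mathrm{in}_w(I_\D)\subseteq I_{\D_w}$, you only need the $\subseteq$ direction of this identity, and that direction is immediate: if $f\in I_Y$ has $\deg f\le d_u$ then $\mathrm{in}_w(f)\in\mathrm{in}_w(I_Y)$ and $\deg\mathrm{in}_w(f)\le d_u$. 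Equivalently, as the paper does in one line via Lemma~\ref{first gen set}, any $M$-homogeneous $ft^v\chi^u\in I_\D$ has $\mathrm{in}_w(ft^v\chi^u)=\mathrm{in}_w(f)\,t^v\chi^u$ with $\mathrm{in}_w(f)\in I_{Y_w}$, so the initial form lands in $I_{\D_w}$ directly. The lifting argument you sketch for the $\supseteq$ direction is therefore redundant: it re-establishes $I_{\D_w}\subseteq\mathrm{in}_w(I_\D)$, which you already proved via generators.

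One further caution: you invoke Proposition~\ref{finite generating set} for $\D_w$ with generating set $\mathrm{in}_w(\mathcal G)$, but that proposition's hypotheses (containing a degree-refining Gr\"obner basis, minimality) were stated for $\mathcal G$, not for its initial forms. The paper sidesteps this by appealing to Lemma~\ref{degree polyhedron lemma} rather than the full proposition; you should either verify that $\mathrm{in}_w(\mathcal G)$ inherits those properties or follow the paper in using the lemma directly.
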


\begin{proof}
Let \(w \in \Trop(J_\D)\). By assumption, \(Y_w \subseteq \Pp^m\) is irreducible and intersects \(T_{\Pp^m}\), so the semi-canonical construction applies to \(Y_w\) with the polyhedral pullback of \(\mathfrak{D}_\text{toric}\) along the embedding of \(Y_w \subseteq \Pp^m\). This new polyhedral divisor on $Y_w$ by $\D_w$. Even if \(Y_w \subseteq \Pp^m\) is not projectively normal, the resulting ideal will still be prime as it is the kernel of a ring homomorphism whose codomain is an integral domain.

We will show that \(\mathrm{in}_w(J_\D) = J_{\D_w}\). To this end, it is enough to show equality of the ideals in the semi-group algebra \(\Cc[\delta^\vee \cap (\Zz^m \times M)]\), so we will show 
\[
    \mathrm{in}_w(I_\D) = I_{\D_w}.
\]
By assumption, the generating set \(\mathcal{G}\) of \(I_Y\subseteq \Cc[t_1^\pm ,\dots, t_r^\pm]\) has been chosen so that \(\mathrm{in}_w(\mathcal{G})\) generates \(I_{Y_w}\). The key is that for all \(g \in \mathcal{G}\), the polyhedra \(P_g\) and \(P_{\mathrm{in}_w(g)}\) are equal. These polyhedra are all given by the following inequalities.

\[
\begin{cases}
v_i \geq -\Delta_i(u) & i = 1,\dotsc, m \\
\sum_{i=1}^m v_i \leq \Delta_0(u) - \deg(g) &  \\
\end{cases}
\]

Since both ideals are \(M\)-graded, we take a homogeneous element \(f t^v\cdot\chi^u \in I_\D\) and its initial form,
\[
\mathrm{in}_w(f t^v\cdot \chi^u) = \mathrm{in}_w(f)t^v \cdot \chi^u.
\]
Note that \(\mathrm{in}_w(f)\) must be in \(I_{Y_w}\), so by Lemma \ref{first gen set}, we see that \(\mathrm{in}_w(I_\D) \subseteq I_{\D_w}\). 

On the other hand, since \(\mathrm{in}_w(\mathcal{G})\) generates \(\mathrm{in}_w(I_{Y})\) and \(P_g = P_{\mathrm{in}_w(g)}\), Lemma \ref{degree polyhedron lemma} implies that \(I_{\D_w}\) is generated by elements of the form 
\[
\mathrm{in}_w(g) t^v \cdot \chi^u\;\; \text{with} \;\; g \in \mathcal{G} \text{ and } (v,u) \in \mathcal{P}_g
\]
which are all \(w\)-initial forms of the generators of \(I_\D\); hence, \(I_{\D_w} \subseteq \mathrm{in}_w(I_\D)\).
\end{proof}

This subsection ends with an example illustrating how the final condition of the theorem is necessary.

\begin{example}
Consider the elliptic curve \(Y = V(x_0 x_2^2 - x_1^3 - x_0^2x_1)\), let \(\iota\) be the inclusion of \(Y\) into \(\Pp^2\), and take the polyhedral divisor on \(Y\), 
\[
\mathfrak{D} = \left[ \frac{6}{5}, \infty \right) \otimes \iota^*(V(x_0)) + \left[ -\frac{1}{2}, \infty \right) \otimes \iota^*(V(x_1)) + \left[ -\frac{2}{3}, \infty \right) \otimes \iota^*(V(x_2)).
\]
Clearly, $\D$ is a polyhedral pullback of a proper polyhedral divisor on $\Pp^2$. In this case \(\mathcal{G} = \{t_2^2 - t_1^3 - t_1 \} \subseteq \Cc[t_1, t_2]\). Note that there are two initial ideals which are prime; however, only one of them gives a prime cone for \(\Trop(J_\D)\). Either by hand or using Macualay2, one sees that 
\[
J_\D = \langle X_1^2 X_3^{10} - X_2^6 X_3^5 + X_1^6 \rangle \subseteq \Cc[X_1,X_2,X_3].
\]
There are three maximal cones in the tropical variety, and one can compute there corresponding initial ideals using the following weights.
\begin{align*}
    w_1 &= (-46, -31, -18)\\
    w_2 &= (10, 7, 4)\\
    w_3 &= (26,17,10)
\end{align*}
These were found by taking the rays of the inner normal fan of the Newton polygon for \(f = t_2^2 - t_1^3 - t_1\), appending a zero in the last entry, and passing them through the linear map in Lemma \ref{tropicalization}.
\begin{center}
 \begin{tabular}{||c || l | l | l ||} 
\hline
 $i$ & $Y_{w_i}$ & $\mathrm{in}_{w_i}(J_\D)$ & $J_{\D_{w_i}}$ \\ 
 \hline
 1 & $V(x_2^2 - x_0 x_1)$ & $\langle X_1^2 X_3^{10} - X_2^6 X_3^5 \rangle$ & $\langle X_1^2 X_3^5 - X_2^6 \rangle$ \\

 2 & $V(x_1^2 - x_0^2)$ & $\langle  X_1^2 X_3^{10} - X_1^6\rangle$ & $\langle X_3^{10} - X_1^4 \rangle$ \\

 3 & $V(x_0 x_2^2 - x_1^3)$ & $\langle X_2^6 X_3^5 - X_1^6 \rangle$ & $\langle X_2^6 X_3^5 - X_1^6 \rangle$  \\
 \hline

\end{tabular}
\end{center}
One sees that \(\mathrm{in}_{w_3}(J(Y)) = J(Y_{w_3})\) is prime. The other two pairs are not equal, nor is \(\mathrm{in}_{w_i}(J(Y))\) prime for \(i=1,2\). This is happening for two reasons. The first weight does not give equal ideals since the degree polyhedron changes after taking the initial form with respect to $w_1$, and the second weight does not work either since the initial ideal of $I_Y$ with respect to $w_2$ is not prime.
\end{example}

\begin{example}
By \cite{Speyer-Sturmfels}, $\mathrm{Gr}(2,n)$ is well-poised under its Pl\"ucker embedding. Moreover, it is not difficult to see that the Pl\"ucker relations satisfy the conditions in Theorem \ref{cone-lifting}. Hence, any proper torus-invariant polyhedral divisor $\D_\text{toric}$ on $\Pp^{\binom{n}{2} - 1}$ produces a well-poised affine $T$-variety by pulling $\D_\text{toric}$ back to $\mathrm{Gr}(2,n)$.
\end{example}

\section{Applications of Theorem \ref{cone-lifting}} \label{sec:examples}

In this section, we apply Theorems \ref{thm-quotients of wellpoised} and \ref{cone-lifting} to show two classes of varieties have well-poised embeddings. We begin with an analysis of arrangement varieties which have been studied by \cite{Ilten-Manon} and \cite{Hausen-Hische-Wrobel}. Recall that an arrangement variety $X$ is a $T$-variety whose base is a projective space. We prove Theorem \ref{thm-main}, i.e. that an affine arrangement variety is well-poised under its semi-canonical embedding. This is done in Theorem \ref{genArrVar}. As discussed in Section \ref{prelim-valuations}, this implies each cone in the tropicalization induces a valuation $\mathfrak{v}$ on the coordinate ring of $X$. When the hyperplane arrangement is general and $\mathfrak{v}$ is coming from a maximal cone of $\Trop(X)$, we argue that the valuation has full rank, and we describe the corresponding value semigroups and Newton-Okounkov cones. As a consequence, the semi-canonical generators form a Khovanskii basis with respect to any such valuation.

In the latter subsection, we introduce hypertoric varieties which have been widely studied by \cite{Hausel-Sturmfels}, \cite{Kutler}, \cite{Bielawski-Dancer}, \cite{Proudfoot}, and others. These varieties are often thought of as hyperk{\"a}hler or quaternionic analogues of toric varieties. As we shall see, these varieties can be constructed as GIT quotients of affine arrangement varieties; hence, Theorems \ref{thm-quotients of wellpoised} and \ref{genArrVar} imply that hypertoric varieties have well-poised embeddings.

%%%%%%%%%%%%%%%%%%%%%%
% Arrangement Varieties
%%%%%%%%%%%%%%%%%%%%%%

\subsection{Arrangement Varieties}\label{genArrVar}

We begin by showing affine arrangement varieties always have well-poised embeddings. Then for each \(M\)-homogeneous valuation coming from the a facet of tropical variety, we describe the value semigroups and Newton-Okoounkov cones. We also give a necessary and sufficient condition for when these semigroups are saturated.

An affine arrangement variety is an affine \(T\)-variety whose base is \(\Pp^c\) and whose polyhedral divisor is of the form 
\[
\mathfrak{D} = \sum_{i=0}^m \Delta_i \otimes V(\ell_i)
\]
where each \(\ell_i \in \Cc[x_0,\dotsc, x_c]_1\), the hyperplanes \(V(\ell_i)\) form a full rank hyperplane arrangement, and \(\sum_{i=1}^m \Delta_i \subsetneq \sigma\). Now, embed \(Y = \Pp^c\) into \(\Pp^m\) via the map 
\[
P \mapsto [\ell_0(P): \dots : \ell_m(P)]
\]
Using this map, one can find a semi-canonical embedding of \(X(\mathfrak{D})\). Indeed, denoting the map above by \(\varphi\), the polyhedral divisor \(\mathfrak{D}\) is the polyhedral pullback of the torus-invariant polyhedral divisor \(\D_\text{toric} = \sum_{i=0}^m \Delta_i \otimes H_i\) on \(\Pp^m\). Note that it is possible that not all the hyperplanes in the arrangement are distinct. In this case, the polyhedral pullback $\D_\text{toric}$ is not proper. It still defines an affine $T$-variety, but it may fail to be normal.

\begin{theorem}\label{general arrangement varieties are well-poised}
Semi-canonical embeddings of affine arrangement varieties are well-poised.
\end{theorem}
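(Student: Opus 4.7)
The plan is to apply Theorem \ref{cone-lifting} to the linear embedding $\varphi\colon \Pp^c \hookrightarrow \Pp^m$ defined by $[\ell_0:\cdots:\ell_m]$ together with the polyhedral pullback $\D$ of $\D_{\text{toric}} = \sum_{i=0}^m \Delta_i \otimes H_i$. Since $\varphi$ is a linear embedding of projective space, it is projectively normal, so the first hypothesis of the theorem is immediate. It then suffices to exhibit a suitable generating set $\mathcal{G}$ and, for each $w$ in the relative interior of a cone of $\Trop(I_Y)$, to verify the three conditions of Theorem \ref{cone-lifting}.

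First I would identify $I_Y$, the vanishing ideal of $Y^\circ = \varphi(\Pp^c) \cap T_{\Pp^m}$, with the \emph{matroid ideal} of the matroid $\mathcal{M}$ associated to the arrangement $\{\ell_0,\dots,\ell_m\}$. In the Laurent polynomial ring on $T_{\Pp^m}$, $I_Y$ is generated by the circuit polynomials $f_C = \sum_{i \in C} a_i^C t_i$ (including a constant term when $0 \in C$, using the dehomogenization $t_0 = 1$) indexed by the circuits $C$ of $\mathcal{M}$. I take $\mathcal{G}$ to be this collection of circuit polynomials; the minimality condition of Section \ref{subsec-semi-canonical} is automatic because no $t_i$ divides a circuit polynomial.

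Next I would verify the three hypotheses of Theorem \ref{cone-lifting}. For irreducibility of $Y_w$: the tropical variety $\Trop(I_Y)$ agrees with the Bergman fan $\mathcal{B}(\mathcal{M})$, and for every $w \in \Trop(I_Y)$ the initial ideal $\mathrm{in}_w(I_Y)$ is itself generated by linear forms (the $\mathrm{in}_w$'s of the circuit polynomials), so it is the vanishing ideal of a linear subspace and therefore prime; thus $Y_w$ is irreducible. For the generating property: it is classical that the circuits of a matroid form a universal Gr\"obner basis for its linear ideal, so $\mathrm{in}_w(\mathcal{G})$ generates $\mathrm{in}_w(I_Y)$ for every $w$. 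For the degree hypothesis: each $g \in \mathcal{G}$ has $\deg(g) = 1$; the condition $w \in \Trop(I_Y)$ forces $\mathrm{in}_w(g)$ to be non-monomial, and since every term of $g$ has degree at most $1$, $\mathrm{in}_w(g)$ must contain at least one degree-$1$ term, giving $\deg(\mathrm{in}_w(g)) = 1$. Hence $P_g = P_{\mathrm{in}_w(g)}$.

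Applying Theorem \ref{cone-lifting}, every cone of $\Trop(I_Y)$ lifts to a prime cone of $\Trop(J_\D)$. Combined with the isomorphism $\Trop(J_\D) \cong \Trop(I_Y) \times N_\Rr$ from Remark \ref{rem-cone lifting}, this shows every cone of $\Trop(J_\D)$ is prime, so $X(\D)$ is well-poised under its semi-canonical embedding. The main obstacle in this plan is assembling the correct matroid-theoretic input, namely $\Trop(I_Y) = \mathcal{B}(\mathcal{M})$ and the universal Gr\"obner basis property of circuits. The degree condition, typically the subtlest hypothesis to check in Theorem \ref{cone-lifting}, is essentially free here because the circuit polynomials are linear.
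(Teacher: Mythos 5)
Your proposal is correct and follows exactly the paper's route: choose $\mathcal{G}$ to be the circuits of $I_Y$, check that they satisfy the three hypotheses of Theorem \ref{cone-lifting} (with the degree condition being automatic for linear circuit polynomials since a monomial initial form is excluded by $w\in\Trop(I_Y)$), and conclude via Remark \ref{rem-cone lifting} and Lemma \ref{tropicalization} that every cone of $\Trop(J_\D)$ is a lift of a prime cone. The paper's proof is simply a terser version of the same argument, asserting that $Y_w\isom\Pp^c$ and that the circuits ``always satisfy the conditions,'' which you have verified in detail.
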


\begin{proof}
The base is well-poised because \(Y_w \isom \Pp^c\) for any \(w \in \Trop(I_Y)\). We can choose \(\mathcal{G}\) to be the circuits of \(I(Y)\) which always satisfies the conditions in Theorem \ref{cone-lifting}. Finally, by Remark \ref{rem-cone lifting} and Lemma \ref{tropicalization}, every cone in \(\Trop(J_\D)\) arises as a lift of a cone from \(\Trop(I_Y)\), so the semi-canonical embedding is well-poised.
\end{proof}

Next, we want to describe the value semigroups and Newton-Okounkov cones of semi-canonically embedded affine \textit{general} arrangement varieties for the valuations coming from the maximal cones of \(\Trop(J_\D)\). While a similar analysis could be done for hyperplane arrangements which are not in general linear position, we restrict to the general case for simplicity.

\begin{theorem}\label{value-semigroup}
Let $\D = \sum_{i=0}^m \Delta_i \otimes V(\ell_i)$ where each $\ell_i$ is a linear form in $\Cc[x_0,\dotsc,x_c]$ and $\sum_i \Delta_i \subsetneq \sigma$. Moreover, assume that the hyperlane arrangement is of full rank and in general linear position. Let \(w \in \mathrm{relint}(\tau)\) where \(\tau\) is a maximal cone in the tropicalized linear space \(\Trop(L)\) where \(L\) is the image of $\Pp^c$ in $\Pp^m$ under the map 
\[
    P \mapsto [\ell_0(P): \dots : \ell_m(P)].
\]
The value semigroup, \(S(R(\D), \mathfrak{v}_w)\), is isomorphic to the following semigroup:
\[
S_w := \left\{ (v,u) \in \Zz^c \times M \st \sum_{i=1}^c v_i \leq \sum_{j \in \mathcal{I}} \lfloor \Delta_j(u)\rfloor \text{ and } v_j \geq -\lfloor \Delta_{i_j}(u) \rfloor \text{ for all } 1\leq j\leq c \right\}
\]
where  \(\mathcal{I}\) is the set of indices where \(w\) is minimized and \(\{i_1,\dotsc,i_c\}\) is  \(\{0,\dotsc, m\} \setminus \mathcal{I}\).
\end{theorem}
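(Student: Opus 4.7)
The plan is to combine the well-poised property established in Theorem \ref{general arrangement varieties are well-poised} with the explicit identification of initial degenerations from Theorem \ref{cone-lifting}, and then to perform a standard global-sections calculation on $\Pp^c$. First, I would extend $w$ to a full rank weight in the product cone $\tau \times N_\Rr \subset \Trop(I_Y) \times N_\Rr \isom \Trop(J_\D)$. Well-poisedness forces the semi-canonical generators to be a Khovanskii basis for the resulting valuation $\mathfrak{v}_w$, so that $\mathrm{gr}_{\mathfrak{v}_w} R(\D) \isom \Cc[S(R(\D), \mathfrak{v}_w)]$. Theorem \ref{cone-lifting} identifies this associated graded with $R(\D_w)$, where $\D_w = \sum_{i=0}^m \Delta_i \otimes H_i|_{Y_w}$, so it suffices to read off the semigroup underlying $R(\D_w)$.

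The key step is to describe $Y_w$ explicitly. Because the arrangement is in general linear position, $L = \varphi(\Pp^c) \subset \Pp^m$ has underlying matroid the uniform matroid $U_{c+1, m+1}$. I would verify that for $w$ in the interior of a maximal cone of $\Trop(L)$, the set $\mathcal{I} := \arg\min w$ has cardinality $m - c + 1$, and that the initial form of any linear defining relation $\ell = \sum_k a_k t_k$ of $L$ is $\sum_{k \in \mathcal{I}} a_k t_k$. Therefore $\mathrm{in}_w(I_L)$ is generated by $m - c$ linear relations involving only the $m - c + 1$ variables $\{t_k : k \in \mathcal{I}\}$, and genericity forces a one-dimensional solution line in these variables. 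Consequently $Y_w \isom \Pp^c$ can be parameterized as $[s : t_{i_1} : \cdots : t_{i_c}]$, where $\{i_1, \ldots, i_c\} = \{0, \ldots, m\} \setminus \mathcal{I}$ and each $t_k$ with $k \in \mathcal{I}$ is a nonzero scalar multiple of $s$.

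With this parameterization the restrictions of the coordinate hyperplanes are immediate: $H_k|_{Y_w} = V(s)$ for every $k \in \mathcal{I}$, while $H_{i_j}|_{Y_w} = V(t_{i_j})$ for $j = 1, \ldots, c$. It follows that
\[
\D_w(u) \;=\; \Big(\sum_{k \in \mathcal{I}} \lfloor \Delta_k(u) \rfloor \Big)\, V(s) \;+\; \sum_{j=1}^c \lfloor \Delta_{i_j}(u) \rfloor\, V(t_{i_j}).
\]
Using the affine coordinates $y_j := t_{i_j}/s$ on $Y_w$, a Laurent monomial $y_1^{v_1} \cdots y_c^{v_c} \chi^u$ represents a global section of $\OO(\D_w(u))$ precisely when $v_j \geq -\lfloor \Delta_{i_j}(u) \rfloor$ for each $j$ and $\sum_j v_j \leq \sum_{k \in \mathcal{I}} \lfloor \Delta_k(u) \rfloor$, which are exactly the inequalities defining $S_w$.

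The main obstacle I anticipate is the combinatorial content of the second paragraph: the precise identification of maximal cones of $\Trop(L)$ with $(m-c+1)$-subsets of $\{0, \ldots, m\}$, together with the resulting coordinate-linear form of $Y_w$. A subtle point to keep in mind throughout is that the multiple hyperplanes $H_k$ for $k \in \mathcal{I}$ all collapse to the same divisor $V(s)$ on $Y_w$, so $\D_w$ fails to be a proper polyhedral divisor in the sense of Section \ref{sec-TVarsPrelims}; the floors $\lfloor \Delta_k(u) \rfloor$ must be kept separate rather than being combined into a single floor of $\sum_{k \in \mathcal{I}} \Delta_k$, and this is precisely why the description of $S_w$ records the sum of the floors rather than the floor of the sum.
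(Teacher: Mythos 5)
Your proposal is correct and follows essentially the same route as the paper: identify $\Trop(L)$ with the Bergman fan of $U_{c+1,m+1}$, observe that for $w$ in a maximal cone the degeneration $Y_w$ is again a $\Pp^c$ on which the hyperplanes $H_k$ with $k\in\mathcal{I}$ all collapse to a single coordinate divisor while the $H_{i_j}$ remain distinct, and then read off the lattice points of the (non-proper) pulled-back divisor $\D_w(u)$, keeping the floors $\lfloor\Delta_k(u)\rfloor$ separate. Your explicit appeal to Theorem \ref{cone-lifting} and the Khovanskii-basis identification $\mathrm{gr}_{\mathfrak{v}_w}R(\D)\isom\Cc[S(R(\D),\mathfrak{v}_w)]$ only makes visible a step the paper leaves implicit, so there is no substantive difference.
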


\begin{remark}
The valuations in the statement above are living in $\Trop(L)$. Remark \ref{rem-cone lifting} and Lemma \ref{tropicalization} show that $\mathfrak{v}_w$ is the weight valuation for the image of $(w,0)$ under the linear map from \ref{tropicalization}.
\end{remark}

\begin{proof}
The base of \(X(\mathfrak{D})\) is the linear subspace of \(L \subseteq \Pp^m\) which corresponds to the general hyperplane arrangement in \(\Pp^c\) given in its polyhedral divisor. The tropicalization of \(L\) is the Bergman fan of the uniform matroid \(U_{c+1, m+1}\), so its maximal faces are indexed by subsets of \(\{0,\dotsc,m\}\) of size \(c\) and are of the form
\[
\tau_{i_1\dots i_c} = \Qq_{\geq0}\{e_{i_1},\dotsc,e_{i_c}\}
\]
where \(\{e_i ~|~ 1 \leq i \leq m\}\) is the usual standard basis and \(e_0 = - \sum_{i=1}^m e_i\). 

If \(w \in \text{relint}(\tau_{i_1\dots i_c})\), \(L_w\) is cut out by binomials supported on \(\mathcal{I}\). Observe that for any \(i\), \(V(x_i) \cap L_w\), is a torus-invariant divisor on \(L_w\) since \(L_w\) is a toric subvariety of \(\Pp^m\) and \(L_w\) intersects \(T_{\Pp^m}\). 

For any two distinct indices $i_j,i_k$ not in $\mathcal{I}$, one sees that \(V(x_{i_j}) \cap L_w \neq V(x_{i_k}) \cap L_w\) since \(L_w\) is cut out by binomials that are supported on the variables $\{x_i ~|~ i \in \mathcal{I}\}$; therefore, the divisors \(D_j = V(x_{i_j})\cap L_w\) constitute \(c\) distinct torus-invariant divisors of \(L_w\). 

For any $k \in \mathcal{I}$, the divisors, \(V(x_k) \cap L_w\), are also torus invariant on \(L_w\), and these are distinct from the divisors listed in the previous paragraph. Thus, these must all be the same since there are only \(c+1\) torus-invariant divisors on \(L_w\). This last divisor is denoted $D_0$

After a change of coordinates, we may assume that the divisors $D_0,\dotsc,D_c$ are the standard coordinate hyperplanes on $\Pp^c$. Pulling \(\mathfrak{D}_\text{toric}\) back to $L_w$ yields the following algebra.
\[
\bigoplus_{u \in \sigma^\vee \cap M} H^0\left(\Pp^c, \OO\left(\left(\sum_{j \in \mathcal{I}} \lfloor \Delta_j(u) \rfloor\right) D_0 + \sum_{j=1}^c \lfloor \Delta_{i_j}(u)\rfloor D_j \right) \right) \cdot \chi^u
\]
For each $u \in \sigma^\vee \cap M$, set $\D_\mathcal{I}(u)$ to be 
\[
\left(\sum_{j \in \mathcal{I}} \lfloor \Delta_j(u) \rfloor\right) D_0 + \sum_{j=1}^c \lfloor \Delta_{i_j}(u)\rfloor D_j.
\]
For each $u$, $\D_\mathcal{I}(u)$ is a torus-invariant divisor on $\Pp^c$ whose global sections are spanned by monomials. In fact, $t^v \in H^0(\Pp^c, \OO(\D_\mathcal{I}(u))$ if and only if $(v,u) \in S_w$. Thus, the algebra above is \(\Cc[S_w]\) which completes the proof.
\end{proof}

\begin{remark}
As a consequence of Theorem \ref{genArrVar}, each maximal cone of $\Trop(J_\D)$ gives rise to a full rank valuation on $R(\D)$, and the semi-canonical generators of $R(\D)$ form a Khovanskii basis for any of these valuations. Moreover, the proof of \ref{genArrVar} can be adapted to compute the value semigroups for any affine arrangement variety, so even in the non-generic case, the semi-canonical generators are a Khovanskii basis.
\end{remark}

The Newton-Okounkov cone, \(C(R(\D), \mathfrak{v}_w)\), is the closure of the positive hull of \(S_w\). Using Theorem \ref{value-semigroup}, the Newton-Okounkov cones can be described as follows.

\begin{corollary}\label{cor-NOKcone}
The Newton-Okounkov cone, \(C(R(\D),\mathfrak{v}_w)\), is isomorphic to 
\[
\left\{ (v,u) \in \Rr^c \times M_\Rr \st \sum_{i=1}^c v_i \leq \sum_{j \in \mathcal{I}} \Delta_j(u) \text{ and } v_j \geq - \Delta_{i_j}(u) \text{ for all } 1\leq j\leq c \right\}
\]
where \(\mathcal{I}\) is the set where \(w\) is minimized and \(\{i_1,\dotsc,i_c\}\) is \(\{0,\dotsc,m\} \setminus \mathcal{I}\).
\end{corollary}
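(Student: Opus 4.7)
By the definition of the Newton-Okounkov cone and Theorem \ref{value-semigroup}, the cone $C := C(R(\D), \mathfrak{v}_w)$ equals $\overline{\mathrm{cone}(S_w)}$, where $S_w$ is the explicit lattice semigroup from the preceding theorem. Let $P$ denote the rational polyhedron displayed on the right-hand side of the corollary. My plan is to prove $C \subseteq P$ and $P \subseteq C$ separately. Before starting, I record that $P$ is in fact a rational polyhedral cone: each coefficient function $\Delta_i(u) = \min_{v \in \Delta_i}\langle u, v\rangle$ is positively homogeneous of degree one and rational piecewise-linear on $\sigma^\vee$, so each defining inequality of $P$ is preserved under multiplication of $(v,u)$ by any $\lambda \geq 0$.

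The inclusion $C \subseteq P$ is the easy direction. For any $(v,u) \in S_w$, the elementary estimates $\lfloor x \rfloor \leq x$ and $-\lfloor x \rfloor \geq -x$ immediately upgrade the floor-style integer inequalities defining $S_w$ to the real-valued inequalities defining $P$; thus $S_w \subseteq P$. Taking positive hulls and closures and using that $P$ is a closed convex cone yields $\overline{\mathrm{cone}(S_w)} \subseteq P$.

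For the reverse inclusion $P \subseteq C$, since $P$ is closed it suffices to show that the rational points in the relative interior of $P$ lie in $\mathrm{cone}(S_w)$, because such points are dense in $P$ (as $P$ is a rational polyhedral cone). So take $(v_0, u_0)$ in the relative interior of $P$ with $(v_0, u_0) \in \Qq^c \times M_\Qq$; then every defining inequality of $P$ holds strictly at this point. Choose $N \in \Zz_{>0}$ large enough that $N(v_0,u_0) \in \Zz^c \times M$ and that $N$ times each of the strict slacks exceeds $1$. Positive homogeneity gives $\Delta_i(Nu_0) = N \Delta_i(u_0)$, and since each floor differs from its argument by less than $1$, the strict real inequalities at $(v_0, u_0)$ translate into the floor-style integer inequalities defining $S_w$ at $N(v_0, u_0)$. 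Hence $N(v_0, u_0) \in S_w$ and $(v_0, u_0) \in \mathrm{cone}(S_w)$, as needed.

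The main subtlety I foresee is verifying that the strict-slack rational points are dense in $P$ and that the uniform scaling step works; both ultimately rest on $P$ being a rational polyhedral cone whose defining data are positively homogeneous rational piecewise-linear functions. Once this is in place, the two-sided containment gives $C = P$, completing the corollary.
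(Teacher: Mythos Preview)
Your argument is essentially correct and in fact more detailed than the paper's, which simply states the corollary without proof (the sentence before it just says that the Newton--Okounkov cone is the closure of the positive hull of $S_w$, and the corollary is the result of dropping the floors). Two small imprecisions are worth tightening. First, your claim that every displayed inequality is strict at a relative interior point of $P$ presumes $P$ is full-dimensional in $\Rr^c\times M_\Rr$; this is true here because $\mathfrak{v}_w$ has full rank (so $S_w$ spans, hence $C$ and therefore $P$ is full-dimensional), but you should say so. Second, for the inequality $\sum v_i \le \sum_{j\in\mathcal I}\Delta_j(u)$ there are $|\mathcal I|$ floors on the right, so the scaled slack must exceed $|\mathcal I|$, not $1$. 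Both issues disappear if you instead choose $N$ so that, in addition to clearing denominators of $(v_0,u_0)$, each $N\Delta_j(u_0)$ is an integer (possible since the $\Delta_j$ have rational vertices); then the floor and real inequalities coincide at $N(v_0,u_0)$ without any appeal to strict slack.
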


With this characterization of of the value semigroups, we can ask what properties they have. We start with a necessary and sufficient condition on the polyhedral divisor for the value semigroup to be saturated, i.e. for $X(\D_w)$ to be a normal toric variety. Then as normal toric varieties are Cohen-Macaulay and the Cohen-Macaulay property is an open condition in flat families, we derive a sufficient (but not necessary) condition for \(X(\D)\) to be Cohen-Macaulay. To this end, we make the following definition.

\begin{definition}
Let \(\{\Delta_i\}\) be a collection of rational polyhedra with common tail cone \(\sigma\). The collection of polyhedra, \(\{\Delta_i\}\), is {\it admissable} if for each \(u \in \sigma^\vee \cap M\) at most one of \(\text{face}(\Delta_i,u)\) is non-integral. Here \(\text{face}(P,u)\) is the face of \(P\) on which the linear functional \(u\) is minimized. 
\end{definition}

\begin{remark}
Collections of admissable polyhedra were studied in \cite{Ilten-Suss} to study test configurations on rational complexity 1 \(T\)-varieties.
\end{remark}

\begin{proposition}\label{saturated-value-semigroup}
Let \(w\), \(S_w\), and \(\mathcal{I}\) be as in \ref{value-semigroup}. Then \(S_w\) is saturated if and only if \(\{\Delta_i\}_{i\in\mathcal{I}}\) is admissable.
\end{proposition}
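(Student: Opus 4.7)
The strategy is to reformulate both conditions in terms of the fractional parts $\{\Delta_i(u)\}$ and reduce the nontrivial direction to a short number-theoretic lemma. Writing $C_w := C(R(\D),\mathfrak{v}_w)$, Corollary \ref{cor-NOKcone} shows that $C_w$ is cut out by the same inequalities as $S_w$, but with the floors removed. Since $v_j \in \Zz$, the lower bound $v_j \geq -\Delta_{i_j}(u)$ from $C_w$ is automatically equivalent to $v_j \geq -\lfloor\Delta_{i_j}(u)\rfloor$, so comparing integer points of $C_w$ and $S_w$ reduces to comparing the two upper bounds on $\sum v_j$. Using the identity $\lfloor \sum_{i\in\mathcal{I}}\Delta_i(u)\rfloor \geq \sum_{i\in\mathcal{I}}\lfloor\Delta_i(u)\rfloor$, with equality if and only if $\sum_{i\in\mathcal{I}}\{\Delta_i(u)\} < 1$, one sees that $S_w$ is saturated if and only if $\sum_{i\in\mathcal{I}}\{\Delta_i(u)\} < 1$ holds for every $u \in \sigma^\vee\cap M$ at which the discrepancy $\lfloor \sum_{i\in\mathcal{I}}\Delta_i(u)\rfloor - \sum_{i\in\mathcal{I}}\lfloor\Delta_i(u)\rfloor$ is witnessed by an integer point in the $C_w$-fiber.

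The direction ``admissible $\Rightarrow$ saturated'' is then immediate: admissibility says that at each $u \in \sigma^\vee\cap M$ at most one $\Delta_i(u)$ with $i\in\mathcal{I}$ is non-integer, so the fractional-part sum has at most one nonzero summand and therefore lies in $[0,1)$.

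For the converse, suppose $\{\Delta_i\}_{i\in\mathcal{I}}$ is not admissible and fix $u_0 \in \sigma^\vee\cap M$ and distinct $i,j \in \mathcal{I}$ with $p := \{\Delta_i(u_0)\} > 0$ and $q := \{\Delta_j(u_0)\} > 0$; both are rational because the $\Delta_k$'s are rational polyhedra. The plan is to produce $u^\ast \in \sigma^\vee\cap M$ with $\sum_{i\in\mathcal{I}}\{\Delta_i(u^\ast)\} \geq 1$ and with a $C_w$-fiber large enough to actually witness the failure of saturation. The central step is the following elementary lemma: for any rationals $p,q \in (0,1)$ there exists $n \geq 1$ with $\{np\} + \{nq\} \geq 1$. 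Indeed, if $p + q \geq 1$ take $n = 1$; otherwise write $p = a/D$ and $q = b/D$ with $a + b < D$, and take $n = D - 1$, so that $\{np\} + \{nq\} = (D-a)/D + (D-b)/D = 2 - (a+b)/D > 1$. By the positive homogeneity $\Delta_k(nu_0) = n\Delta_k(u_0)$, one then has $\sum_{i\in\mathcal{I}}\{\Delta_i(nu_0)\} \geq 1$.

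The final step ensures that $u^\ast = nu_0$, possibly after further scaling, actually witnesses the failure of saturation. If $u_0 \in \mathrm{int}(\sigma^\vee)$, the properness condition $\sum_i \Delta_i \subsetneq \sigma$ gives $\sum_{i=0}^m \Delta_i(u_0) > 0$, so the $C_w$-fiber over $(n + kD)u_0$ grows linearly in $k$; replacing $n$ by $n + kD$ leaves both fractional parts unchanged (since $Dp, Dq \in \Zz$), and for $k$ sufficiently large the enlarged fiber contains an integer point with $\sum v_j$ strictly exceeding $\sum_{i\in\mathcal{I}}\lfloor\Delta_i(u^\ast)\rfloor$. If instead $u_0 \in \partial \sigma^\vee$, we first replace $u_0$ with a nearby interior lattice point: on the top-dimensional cone of the common refinement of the normal fans of $\Delta_i$ and $\Delta_j$ whose closure contains $u_0$, both functions are linear with non-lattice linear parts, and the set of $u \in M$ on which both values remain non-integer is the complement of a union of two proper sublattices and therefore meets $\mathrm{int}(\sigma^\vee)$ nontrivially. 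I expect the main obstacle to be pinning down this number-theoretic lemma and the boundary reduction; both are essentially elementary once the reduction to fractional parts is in place.
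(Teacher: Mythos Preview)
Your approach is sound and, in several respects, more careful than the paper's own proof. Both directions differ from the paper's argument.

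For ``admissible $\Rightarrow$ saturated,'' you compare the lattice points of $C_w$ and $S_w$ directly via fractional parts. The paper instead recognises that, under admissibility, $\sum_{i\in\mathcal{I}}\lfloor\Delta_i(u)\rfloor=\lfloor\Delta_{\mathcal{I}}(u)\rfloor$ where $\Delta_{\mathcal{I}}=\sum_{i\in\mathcal{I}}\Delta_i$, and hence $\Cc[S_w]$ coincides with $R(\D')$ for the \emph{proper} polyhedral divisor $\D'=\Delta_{\mathcal{I}}\otimes V(y_0)+\sum_j\Delta_{i_j}\otimes V(y_j)$ on $\Pp^c$; normality (and thus saturation) then follows from Altmann--Hausen. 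Your argument is more elementary and entirely self-contained; the paper's buys a conceptual explanation for \emph{why} admissibility is the right condition (it is exactly what lets you repackage the $\mathcal{I}$-part as a single polyhedral coefficient).

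For ``not admissible $\Rightarrow$ not saturated,'' the paper simply asserts that non-admissibility produces some $u$ with $\lfloor\sum_{\mathcal{I}}\Delta_i(u)\rfloor-\sum_{\mathcal{I}}\lfloor\Delta_i(u)\rfloor\geq 1$ and a lattice point $(v,u)$ witnessing it, then scales by $k$ clearing denominators to land $k(v,u)$ in $S_w$. Your number-theoretic lemma (for rationals $p,q\in(0,1)$ there is $n$ with $\{np\}+\{nq\}\geq 1$) actually supplies the first assertion, and your further scaling $n\mapsto n+kD$ together with the properness bound $\sum_i\Delta_i(u_0)>0$ for interior $u_0$ supplies the second; the paper leaves both implicit. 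So here you are filling genuine gaps rather than taking a different route.

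The one place your plan needs tightening is the boundary reduction. When $u_0\in\partial\sigma^\vee$ you may have $\sum_i\Delta_i(u_0)=0$, and then replacing $n$ by $n+kD$ does not enlarge the $C_w$-fiber. Your sketch (move to an interior lattice point in the common linearity region where both linear forms $\langle\,\cdot\,,w_i\rangle,\langle\,\cdot\,,w_j\rangle$ remain non-integral) is the right idea, but you should make precise that the two ``bad'' sets $\{u:\langle u,w_i\rangle\in\Zz\}$ and $\{u:\langle u,w_j\rangle\in\Zz\}$ are proper sublattices of $M$, and that the intersection of the relevant linearity cones with $\mathrm{int}(\sigma^\vee)$ is full-dimensional, so a lattice point avoiding both sublattices exists there. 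A clean alternative: choose $u_1\in\mathrm{int}(\sigma^\vee)\cap M$ and a common denominator $D$ for \emph{all} vertices of the $\Delta_k$, then take $u_0'=u_0+Du_1$; on the appropriate linearity cone $\Delta_k(u_0')=\Delta_k(u_0)+D\Delta_k(u_1)$ with $D\Delta_k(u_1)\in\Zz$, so both fractional parts survive and $u_0'$ is interior.
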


\begin{proof}
Suppose \(\{\Delta_i\}_{i \in \mathcal{I}}\) is not admissable. In this case, there is some \(u \in \sigma^\vee \cap M\) so that 
\[
    \left\lfloor \sum_{i \in \mathcal{I}} \Delta_i(u) \right\rfloor - \sum_{i \in \mathcal{I}} \lfloor \Delta_i(u)\rfloor \geq 1.
\]
Therefore, there exists some \((v,u) \notin S_w\), but 
\begin{align*}
    v_j &\geq - \lfloor\Delta_{i_j}(u) \rfloor \\
    \sum_{i=1}^c v_i &\leq \left\lfloor \sum_{i\in\mathcal{I}} \Delta_i(u) \right\rfloor.
\end{align*}
There exists a \(k\in \Nn\) so that \(\Delta_i(ku)\) is integral for each \(i \in \mathcal{I}\), which forces 
\[
    \left\lfloor \sum_{i} \Delta_i(ku) \right\rfloor = \sum_{i} \lfloor \Delta_{i} (ku) \rfloor.
\]
It follows that \(k(v,u) \in S_w\), so \(S_w\) is not saturated.

Conversely, suppose that \(\{\Delta_i\}_{i \in \mathcal{I}}\) is admissable. Then for each \(u \in \sigma^\vee \cap M\), 
\[
    \sum_{i\in\mathcal{I}} \lfloor \Delta_i(u) \rfloor = \left\lfloor \sum_{i\in\mathcal{I}} \Delta_i(u) \right\rfloor.
\]
Let \(\Delta_\mathcal{I}\) be the Minkowski sum of all \(\Delta_i\) with \(i \in\mathcal{I}\). The right-hand side of the equation above is the equal to \(\lfloor \Delta_\mathcal{I} (u) \rfloor\). Applying the semi-canonical construction to the polyhedral divisor,
\[
\mathfrak{D}' =  \Delta_{\mathcal{I}} \otimes V(y_0) + \Delta_{i_1} \otimes V(y_1) + \dots + \Delta_{i_c} \otimes V(y_c),
\]
yields \(\Cc[S_w]\) as the coordinate ring of \(X(\mathfrak{D}')\). As \(\mathfrak{D}'\) is a proper polyhedral divisor on \(\Pp^c\), this ring must be normal, so \(S_w\) is saturated.
\end{proof}

\begin{corollary}\label{isCM} Let \(\mathfrak{D} = \sum_{i=0}^m \Delta_i \otimes V(\ell_i)\) be the polyhedral divisor for the affine general arrangement variety, \(X(\mathfrak{D})\). If there is some \(\mathcal{I} \subseteq \{0,\dotsc,m\}\) with \(|\mathcal{I}| = m - c + 1\) so that \(\{\Delta_i\}_{i \in \mathcal{I}}\) is admissable, then \(X(\mathfrak{D})\) is Cohen-Macaulay. 
\end{corollary}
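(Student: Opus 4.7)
The plan is to exhibit a flat toric degeneration of $X(\D)$ whose special fiber is a normal affine toric variety, and then invoke openness of the Cohen--Macaulay locus in a flat family together with the standard fact (Hochster) that normal toric varieties are Cohen--Macaulay. All of the ingredients are already in place: Theorem \ref{general arrangement varieties are well-poised} supplies prime initial ideals, Theorem \ref{value-semigroup} identifies the associated graded algebras with explicit semigroup algebras, and Proposition \ref{saturated-value-semigroup} translates admissibility of a polyhedral coefficient collection into saturation of the corresponding value semigroup.

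Concretely, let $\{i_1,\dotsc,i_c\} := \{0,\dotsc,m\} \setminus \mathcal{I}$. Since $|\mathcal{I}| = m - c + 1$, this complement has exactly $c$ elements and therefore indexes a maximal cone $\tau_{i_1\cdots i_c}$ of the Bergman fan $\Trop(L)$, according to the description in the proof of Theorem \ref{value-semigroup}. I would pick any $w$ in the relative interior of the lift of this cone to $\Trop(J_{\D})$ under the injective linear map of Lemma \ref{tropicalization}. Theorem \ref{general arrangement varieties are well-poised} guarantees that $\mathrm{in}_w(J_{\D})$ is prime, and Theorem \ref{value-semigroup} identifies the corresponding associated graded ring with the semigroup algebra $\Cc[S_w]$, where $\mathcal{I}$ plays the role of the minimizing index set. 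Since $\{\Delta_i\}_{i \in \mathcal{I}}$ is admissible, Proposition \ref{saturated-value-semigroup} gives that $S_w$ is saturated, so the central fiber of the degeneration is a normal affine toric variety.

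To finish, I would consider the standard Gr\"obner flat family over $\Aa^1$ obtained from $w$-homogenization of $J_{\D}$: its fiber at $0$ is $\Spec(\Cc[\mathbf{x}]/\mathrm{in}_w(J_{\D})) \isom \Spec(\Cc[S_w])$, while every fiber over $\Cc^\times$ is isomorphic to $X(\D)$, with the fibers over $\Cc^\times$ all identified via the natural $\Cc^\times$-action rescaling the parameter. The special fiber is Cohen--Macaulay by Hochster's theorem, and since the Cohen--Macaulay locus in the base of a finite-type flat family is open (EGA IV), it must contain an open neighborhood of $0 \in \Aa^1$. Any such neighborhood meets $\Cc^\times$, and any point of intersection yields a Cohen--Macaulay fiber isomorphic to $X(\D)$, proving the claim.

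There is no serious technical obstacle here: the well-poised semi-canonical framework already packages the weight degenerations we need, and the cardinality condition on $\mathcal{I}$ is exactly what is required to match it with a maximal cone of the Bergman fan. The only point requiring care is verifying flatness of the $w$-homogenization, but this is standard once the initial ideal is known to be prime, since then the Hilbert function of the homogenized ideal is constant along the $\Aa^1$ parameter.
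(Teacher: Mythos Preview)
Your proposal is correct and matches the paper's approach: the paper states (in the paragraph preceding Proposition~\ref{saturated-value-semigroup}) that the corollary follows because normal toric varieties are Cohen--Macaulay and the Cohen--Macaulay property is open in flat families, which is exactly the argument you spell out. One small quibble: flatness of the Gr\"obner degeneration over $\Aa^1$ is automatic and does not require the initial ideal to be prime; the Hilbert-function constancy you mention holds for any $w$, not just those giving prime initial ideals.
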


While Corollary \ref{isCM} can be used to see if an arrangement variety is Cohen-Macaulay, the next example illustrates that the admissability condition does not always detect the Cohen-Macaulay property.

\begin{example}
Consider the duVal singularity, $E_8=V(x^2 + y^3 + z^5) \subseteq \Aa^3$. This is a complexity 1 general arrangement variety whose polyhedral divisor on $\Pp^1$ is given by 
\[
    \D = \left[ \frac{6}{5}, \infty \right) \otimes \{0\} + \left[ -\frac{1}{2}, \infty \right) \otimes \{1\} + \left[ -\frac{2}{3}, \infty \right) \otimes \{\infty\}.
\]
One can check (using Macaulay2 say) that this embedding in $\Aa^3$ is semi-canonical. Since $\D$ is a proper polyhedral divisor, we know that this variety is normal. Since it is a normal surface, $E_8$ is Cohen-Macaulay; however, any choice of two of the polyhedral coefficients is not admissable. One can also see that the three toric degenerations of $E_8$ are all non-normal toric varieties. 
\end{example}

\subsection{Hypertoric Varieties}\label{sec-hypertoric}
Hypertoric varieties are quaternionic analogues of toric varieties, and as we will see below, they are integrally related to arrangement varieties at least when they are defined over the complex numbers. In this section, we will review how to construct semiprojective hypertoric varieties as GIT quotients of a subvariety of the cotagent bundle \(T^*\Aa^d\) (see \cite{Hausel-Sturmfels} or \cite[Remark 5.1]{Kutler}); we will refer to this subvariety of $T^* \Aa^d$ as the \textit{total space} of the hypertoric variety. Moreover, we will show that the total space is a semi-canonically embedded arrangement variety (Proposition \ref{prop-disjoint sqfree monomial maps and linear ideals}). Then Theorems \ref{thm-quotients of wellpoised} and \ref{general arrangement varieties are well-poised} imply hypertoric varieties have well-poised presentations.

The hypertoric varieties in question arise from hyperplane arrangements. First, we fix some notation. Let \(N\) and \(M\) be mutually dual finite rank lattices. Fix sequences \(a = (a_1,\dotsc, a_d) \in N^d\) and \(r = (r_1,\dotsc, r_d) \in \Zz^d\). Then, define \(\mathcal{A}(a,r)\) to be the hyperplane arrangement \(\{H_{a_i,r_i} \st 1\leq i \leq d\}\) where each \(H_{a_i, r_i} = \{u \in M_\Rr \st \langle a_i, u\rangle = r_i\}\). Given any hyperplane arrangement \(\mathcal{A} = \mathcal{A}(a,r)\), we denote its {\it centralization} by \(\mathcal{A}_0 = \mathcal{A}(a,0)\), and we get a linear subspace \(L_{\mathcal{A}_0}\) of \(\Aa^d\) which is cut out by all linear forms \(\sum_{i=1}^d c_i x_i\) such that \(\sum_{i=1}^d c_i a_i = 0\).

Now, given a full rank hyperplane arrangement, \(\mathcal{A}(a,r)\), the following diagram commutes and the rows and third column are exact.

\begin{center}
    \begin{tikzcd}
    0 \arrow[r] & \Lambda \arrow[r, "i"] \arrow[d, "\mathrm{id}"] & \Zz^d \arrow[r, "a"] \arrow[d, "\nabla"] & N \arrow[r] \arrow[d]& 0 \\
    0 \arrow[r]& \Lambda \arrow[r,"\nabla \circ i"]& \Zz^d \oplus \Zz^d \arrow[r, "\widetilde{a}"]& \widetilde{N}\arrow[r]\arrow[d,"p"] & 0 \\
               &                                   &                              &         \Zz^d \arrow[d]               & \\
               &                                   &                            &                 0          &
    \end{tikzcd}
\end{center}

\noindent The map \(a : \Zz^d \to N\) is defined by mapping \(e_i \mapsto a_i\), \(\nabla : \Zz^d \to \Zz^d \oplus \Zz^d\) is defined by \(e_i \mapsto e_i^+ - e_i^-\) where the superscripts indicate which side of the direct sum the basis vector lives on, and \(p : \widetilde{N} \to \Zz^d\) (the cokernel of the antidiagonal embedding of \(N \to \widetilde{N}\)) is defined by sending \(\overline{e_i}^\pm \mapsto e_i\). 

Note that \(\Zz^d\oplus\Zz^d\) is the co-character lattice of \(\Aa^{2d} \isom T^*\Aa^d\), and the composition \((p \circ \widetilde{a})_\Rr\) maps \(\Rr^{2d}_{\geq 0}\) onto \(\Rr^d_{\geq 0}\). Thus, there is a surjective map of toric varieties \(T^* \Aa^d \to \Aa^d\). Secondly, there is a linear projection \(\Aa^d \to \Aa^d/ L_{\mathcal{A}_0}\). The composition of these two morphisms gives us a moment map \(\mu_\mathcal{A} : T^*\Aa^d \to \Aa^d/L_{\mathcal{A}_0}\) for the Hamiltonian action of the torus \(G_\mathcal{A} := \Spec(\Cc[\Lambda^*])\) on \(T^*\Aa^d\) (where \(\Lambda^*\) is the dual of \(\Lambda)\)). 

\begin{definition}\label{hypertoric-def}\cite[Remark 5.1]{Kutler}
Given a hyperplane arrangement \(\mathcal{A} := \mathcal{A}(a,r)\), we define the hypertoric variety \(\mathfrak{M}_\mathcal{A}\) to be the GIT quotient \(\mu_\mathcal{A}^{-1}(0) \sslash_\alpha G_\mathcal{A}\) where \(\mu_\mathcal{A}\) and \(G_\mathcal{A}\) are as above and \(\alpha = i^*(a)\). The preimage of the origin under the moment map, $\mu_\mathcal{A}^{-1}(0)$, is referred to as the \textit{total space} of $\mathfrak{M}_\AA$.
\end{definition}

Let \(S = \Cc[x_1,\dotsc, x_d, y_1,\dotsc, y_d]\) be the homogeneous coordinate ring for \(T^*\Aa^d\) so \(\deg(x_i) = 1\) and \(\deg(y_i) = -1\) for all \(i = 1,\dotsc d\), let \(R = \Cc[t_1,\dotsc,t_d]\) be the coordinate ring for \(\Aa^d\) and let \(I\) be the vanishing ideal for \(L_{\mathcal{A}_0}\). The moment map is defined by the ring homomorphism 
\begin{align*}
    \varphi : R &\to S \\ t_i &\mapsto x_i y_i,
\end{align*}

\noindent and therefore, \(\mu_\mathcal{A}^{-1}(0) = V(\varphi(I))\). The following proposition shows that ideals of this form are semi-canonical presentations of certain arrangement varieties.

\begin{proposition}\label{prop-disjoint sqfree monomial maps and linear ideals}
Let \(I \subseteq \Cc[x_1,\dotsc,x_n]\) be a linear ideal which does not contain any monomials; fix a sequence of positive integers \((i_1, \dotsc, i_n)\) where each \(i_k \geq 2\); and set \(N = \sum_{j=1}^n i_j\). Define the ring homomorphism $\psi$ as follows.
\begin{align*}
\psi : \Cc[x_1,\dotsc,x_n] &\to \Cc[y_{1,1},\dotsc,y_{1,i_1}, \dotsc, y_{n,1},\dotsc, y_{n, i_n}] \\
x_k &\mapsto \prod_{j=1}^{i_k} y_{k,j}
\end{align*}
Let \(J = \psi(I)\). Then \(X = V(J)\) is a well-poised arrangement variety, and the presentation given is semi-canonical.
\end{proposition}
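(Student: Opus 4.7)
The plan is to realize $V(J) \subseteq \Aa^N$ as the semi-canonical embedding of an affine arrangement variety, after which Theorem \ref{general arrangement varieties are well-poised} immediately yields well-poisedness. First I would extract the arrangement data: since $I$ is a linear ideal containing no monomials, embedding $V(I) \subseteq \Aa^n$ into $\Pp^n$ via the standard affine chart and taking projective closure produces $Y := \overline{V(I)}$, a projective linear subspace of dimension $c := \dim V(I)$, so $Y \isom \Pp^c$. None of the $n+1$ coordinates of $\Pp^n$ vanishes identically on $Y$, so restricting the coordinate hyperplanes gives a full rank arrangement $D_0, D_1, \ldots, D_n$ on $Y$ (possibly with repetitions, which the paper's notion of arrangement variety accommodates).

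Next I would build the polyhedral divisor explicitly. Take the cocharacter lattice $N' = \Zz^{N-n}$ with basis $\{f_{k,j}\}$ indexed by pairs $(k,j)$ with $1 \leq k \leq n$ and $2 \leq j \leq i_k$, set the tail cone $\sigma = \{0\}$ and $\Delta_0 = \emptyset$, and for each $k \geq 1$ let $\Delta_k$ be the $(i_k - 1)$-simplex $\mathrm{conv}\{0, f_{k,2}, \ldots, f_{k,i_k}\}$. The cone $\delta \subseteq \Rr^n \oplus N'_\Rr$ from Proposition \ref{Ambient Toric Variety Prop} then has exactly $N$ rays $(e_k, 0)$ and $(e_k, f_{k,j})$, and one checks these form a $\Zz$-basis of $\Zz^n \oplus N'$; hence $X(\D_\text{toric}) = \Aa^N$ and the Hilbert basis of $\delta^\vee$ is the dual basis, identifying the $N$ semi-canonical generators with the $N$ variables $y_{k,j}$ in such a way that the character $\chi^{e_k} = t_k$ is sent to $y_{k,1} \cdots y_{k,i_k}$. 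Let $\D$ be the polyhedral pullback of $\D_\text{toric} = \sum_{k=0}^n \Delta_k \otimes H_k$ along $Y \hookrightarrow \Pp^n$; this is of arrangement form on $Y = \Pp^c$, and its semi-canonical embedding per Definition \ref{def-semicanonical} lands in $\Aa^N$.

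Finally, I would match the defining ideals. Take $\mathcal{G}$ to be a reduced row-echelon basis for the linear ideal $I_Y \cap \Cc[t_1,\ldots, t_n]$ (the de-homogenization of the vanishing ideal of $Y$), which corresponds to $I$ under $t_k \leftrightarrow x_k$. Because $\Delta_0 = \emptyset$ renders the inequality $\sum v_i \leq \Delta_0(u) - \deg(g)$ vacuous, a direct computation shows that each degree polyhedron $P_g$ coincides with $\delta^\vee$, so $\mathcal{P}_g = \{0\}$. Proposition \ref{finite generating set} then yields $I_\D = \langle g \st g \in \mathcal{G}\rangle$, and translating to the $y$-coordinates via $t_k \mapsto y_{k,1}\cdots y_{k,i_k}$ gives precisely $\psi(\mathcal{G})$, which generates $J$. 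Thus $V(J) = X(\D)$ with the prescribed embedding as its semi-canonical one, and Theorem \ref{general arrangement varieties are well-poised} completes the proof. The main technical obstacle is the simultaneous verification of the two identifications above---namely that the chosen simplices produce the positive-orthant cone matching the Hilbert basis with the $y_{k,j}$, and that each $P_g$ collapses to $\delta^\vee$ so no extra generators appear---but both reduce to direct polyhedral computations.
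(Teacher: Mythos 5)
Your route is genuinely different from the paper's, and as written it contains a gap. You realize $V(J)$ as a $T'$-variety for the $(N-n)$-dimensional torus $T'=T_{N'}$, with base $\overline{V(I)}\isom\Pp^c\subseteq\Pp^n$, trivial tail cone $\sigma=\{0\}$, $\Delta_0=\emptyset$, and simplex coefficients $\Delta_k$. The polyhedral computations you sketch do check out: the rays $(e_k,0)$ and $(e_k,f_{k,j})$ form a $\Zz$-basis, the dual basis identifies $R(\D_\text{toric})$ with $\Cc[y_{k,j}]$ so that $t_k\mapsto y_{k,1}\cdots y_{k,i_k}$, and (granting the convention $\Delta_0(u)=\infty$) each degree polyhedron $P_g$ collapses to $\delta^\vee$, giving $J_\D=J$. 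The problem is that this polyhedral divisor sits outside the hypotheses under which the machinery you invoke is established. The semi-canonical framework of Section \ref{subsec-semi-canonical} and the definition of an arrangement variety both require $\sum_{i\geq 1}\Delta_i\subsetneq\sigma$; with $\sigma=\{0\}$ and nontrivial simplices $\Delta_k$ this fails, and you evade properness only by setting $\Delta_0=\emptyset$, which makes the locus of $\D$ the affine chart $V(I)\subseteq\Aa^n$. Correspondingly, the normalized Chow quotient of $V(J)$ by $T'$ is the affine variety $V(I)$, not $\Pp^c$ (projecting the rays of $\delta$ to $\Rr^n$ yields the fan of $\Aa^n$, not of $\Pp^n$). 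So $V(J)$ with the $T'$-action is not an arrangement variety in the sense of Theorem \ref{general arrangement varieties are well-poised}, and that theorem --- as well as Proposition \ref{Ambient Toric Variety Prop}, Proposition \ref{finite generating set}, and Theorem \ref{cone-lifting}, whose statements presume non-empty coefficients, bounded section polytopes $P_u$, and finite-dimensional graded pieces --- cannot be cited without first reworking them for empty coefficients. That extension is plausible but is precisely the technical content you would owe.

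The paper sidesteps all of this by acting with the full $(N-n+1)$-dimensional torus (your $T'$ is the codimension-one subtorus spanned by the block columns $F_{i_k}$, missing the last column $\ell$ of the matrix $F$). With the larger torus the Chow quotient is the projective linear space $\Pp(V(I))\subseteq\Pp^{n-1}$ of dimension $c-1$, the polyhedral divisor collapses to the single proper term $\Delta_0\otimes D_0$ with a genuine full tail cone, and all standing hypotheses hold. Semi-canonicity of the given presentation is then verified by showing the ambient cone $\delta$ of Proposition \ref{Ambient Toric Variety Prop} is smooth via a determinant computation, so $X(\D_\text{toric})\isom\Aa^N$ and the two embeddings agree on the dense torus. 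To repair your argument, either enlarge the torus by the missing one-parameter subgroup and recover this setup, or verify directly that Lemma \ref{first gen set}, Proposition \ref{finite generating set}, Lemma \ref{tropicalization}, and Theorem \ref{cone-lifting} remain valid for polyhedral divisors with an empty coefficient over an affine locus.
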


\begin{proof}
The first steps are to identify the $T$-action on $X$, compute the Chow quotient of $X$ by $T$, and to give a description of its polyhedral divisor. We will follow the procedure laid out in \cite[\S 11]{Altman-Hausen}. There is a \(T = (\Cc^\times)^{N - n + 1}\)-action on \(\Aa^N\) which restricts to an effective action on \(X\). The \(T\)-action is described by the following matrix. 
\[
F = 
\begin{pmatrix}
F_{i_1} & 0 & \dotsc & 0 &\ell_{i_1} \\
0       & F_{i_2} & \dotsc & 0 & \ell_{i_2} \\
\vdots & \vdots & \ddots & \vdots & \vdots \\ 
0      & 0 & \hdots & F_{i_n} & \ell_{i_n}
\end{pmatrix}
\]
Here \(F_k\) is in \(\Zz^{k \times (k - 1)}\) and $\ell_k$ is in $\Zz^{k \times 1}$, and they are defined as follows.
\[
(F_k)_{ij} = \begin{cases}
-1 & \text{if }i = j \\
1 & \text{if }i = j + 1 \\
0 & \text{else}
\end{cases},
\;\;\;
(\ell_k)_{i1} = \begin{cases}
1 & \text{if }i = k \\
0 & \text{else}
\end{cases}
\]
The $T$-action as follows: given \(t \in T\) and \(p \in X\)(or $\Aa^N$)
\[
 t\cdot p := (\dotsc, t^{\alpha_{j,k}} p_{j,k}, \dotsc) \in X(\text{or }\Aa^N)
\]
where \(\alpha_{j,k}\) is the \((i_1 + \dots + i_{j-1} + k)^\text{th}\) row of \(F\). This induces a split exact sequence of co-character lattices which can be used to compute the normalized Chow quotient, \(Y := X\sslash T\), and to compute the polyhedral divisor \(\mathfrak{D}\) on \(Y\) corresponding to \(X\). By identifying the co-character lattices of \(T\), \((\Cc^\times)^N\), and \((\Cc^\times)^N \sslash T\), by \(\Zz^{N- n + 1}, \Zz^N,\) and \(\Zz^{n-1}\), respectively, one arrives at the split exact sequence below.

\begin{center}
    \begin{tikzcd}
    0 \ar{r}& \Zz^{N - n + 1} \ar[bend left = 33]{r}{F} & \Zz^{N} \ar[bend left = 33]{l}{s} \ar{r}{P} & \Zz^{n - 1} \ar{r} & 0
    \end{tikzcd}
\end{center}

\noindent The matrix \(F\) is already defined, and the other two matrices are as follows:

\[
P = 
\begin{blockarray}{ccccccccccccc}
\begin{block}{\BAmulticolumn{3}{c}\BAmulticolumn{3}{c}\BAmulticolumn{3}{c}\BAmulticolumn{1}{c}\BAmulticolumn{3}{c}}
 i_1 & i_2 & i_3 & & i_n \\
 \overbrace{\qquad \qquad \qquad} & \overbrace{\qquad \qquad \qquad } & \overbrace{\qquad \qquad \qquad} & \hdots & \overbrace{\qquad \qquad \qquad} \\
\end{block}
\begin{block}{(ccccccccccccc)}
-1 & \hdots & -1 & 1 & \hdots & 1 & 0 & \hdots & 0 & \hdots & 0 & \hdots & 0 \\
-1 & \hdots & -1 & 0 & \hdots & 0 & 1 & \hdots & 1 & \hdots & 0 & \hdots & 0 \\
\vdots & \ddots & \vdots & \vdots & \ddots & \vdots & \vdots & \ddots & \vdots & \ddots & \vdots & \ddots & \vdots \\
-1 & \hdots & -1 & 0 & \hdots & 0 & 0 & \hdots & 0 & \hdots & 1 & \hdots & 1 \\
\end{block} 
\end{blockarray}.
\]

\[
s = 
\begin{pmatrix}
s_{i_1} & 0 & \dotsc & 0 \\
0       & s_{i_2} & \dotsc & 0 \\
\vdots & \vdots & \ddots & \vdots  \\ 
0      & 0 & \hdots & s_{i_n} \\
\mathbb{1}& 0 & \hdots & 0
\end{pmatrix}
\]
where $s_k \in \Zz^{(k-1)\times k}$ is defined entrywise $(s_k)_{ij} = -1$ if $i\geq j$ and 0 otherwise, and $\mathbb{1} = (1,\dotsc,1) \in \Zz^{1 \times i_1}$.

One checks that \(sF = I_{N-n+1}\) and the sequence is exact. The normalized Chow quotient, \(\mathbb{A}^N \sslash T\), is given by the toric variety \(Z(\Sigma)\) where \(\Sigma\) coarsest fan which refines all cones \(P(\tau)\) where \(\tau\) is a face of the positive orthant \(\Qq_{\geq 0}^N\). In this case, \(\Sigma\) is the standard fan for \(\Pp^{n-1}\); therefore, \(Y\) is the closure of the image of \(X \cap (\Cc^\times)^N\) under the monomial map \(\mu_P : (\Cc^\times)^N \to \Pp^{n-1}\) induced by \(P\), 
\[(y_{1,1},\dotsc, y_{n,i_n}) \mapsto \left[\psi(x_1) : \psi(x_2) : \dotsc : \psi(x_n) \right],\]
Then \(Y\) can be seen to be \(\Pp(V(I)) \subseteq \Pp^{n-1}\). It follows that \(X\) is an arrangement variety whose polyhedral divisor is 
\[
    \mathfrak{D} := \sum_{k=0}^{n-1} \Delta_k \otimes D_k
\]
where \(D_k := H_k\vert_Y\) is a hyperplane. If we let $\{e_0,\dotsc,e_{n-1}\}$ be the standard ray generators for the fan of $\Pp^{n-1}$, the polyhedron \(\Delta_k\) is \(s(\Qq_{\geq 0}^N \cap P^{-1}(e_k))\). The common tail cone among these polyhedra is $\sigma = s(\Qq_{\geq 0})$. Upon analyzing each \(\Delta_k\) further, one sees that each is the Minkowski sum of the polytope $\Pi_k$ and the cone $\sigma$ where $\Pi_k$ is the convex hull of the \((1 + \sum_{j=1}^{k} i_j)^\text{th}\) through $(\sum_{j=1}^{k+1} i_j)^\text{th}$ columns of \(s\). For $k \geq 1$, each $\Delta_k = \sigma$ as $\Pi_k \subset \sigma$ and $0$ is a vertex of $\Pi_k$. Thus, the polyhedral divisor $\D$ simplifies to the expression below.
\[
    \D = \Delta_0 \otimes D_0
\]

Recall that a semi-canonical embedding of $X(\D)$ depends on a projectively normal embedding of the base in some projective space so that each Cartier divisor $D$ appearing in $\D$ is a pullback of a coordinate hyperplane. As $X \isom X(\D)$, Theorem \ref{general arrangement varieties are well-poised} implies any semi-canonical embedding of \(X\) is well-poised. It remains to show that the presentation given is semi-canonical with respect to the embedding $Y = \Pp(V(I)) \subseteq \Pp^{n-1}$.

By Proposition \ref{Ambient Toric Variety Prop}, one can embed \(X(\mathfrak{D})\) into a normal toric variety \(X(\D_\text{toric})\) whose rational polyhedral cone \(\delta\) is the positive hull of the columns of the following matrix.
\[
A:= \left( 
\begin{array}{c|c}
   \widehat{P}  & I_{n-1} \\
  \hline
   \widehat{s} & 0
\end{array}
\right) \in \Zz^{N \times N}
\]
Here $\widehat{s}$ is the square $(N-n+1) \times (N-n+1)$ matrix obtained from $s$ by deleting all of the columns which are identically zero, and $\widehat{P} \in \Zz^{(n-1) \times (N-n+1)}$ is defined as follows. 
\[
    \widehat{P}_{ij} = \begin{cases}
    -1 & \text{if }j \leq i_1 \\
    0 & \text{otherwise}
    \end{cases}
\]
We claim that $\delta$ is smooth. If this is the case, then $X(\D_\text{toric})$ is isomorphic to $\Aa^N$ under the map given by sending the $y_{i,j}$'s to the ray generators of $\delta^\vee$, and it follows that the semi-canonical embedding is the same as the embedding given as both presentations will agree on the dense torus of $\Aa^N$.

To this end, it suffices to show that $\det(A) = \pm 1$. Note that the inverse of $\widehat{s}$ is $\widehat{F}$ which is obtained from $F$ by deleting the rows of $F$ which correspond to the zero columns of $s$. Consider the following $N\times N$ integral matrix.
\[
    B := \left( 
        \begin{array}{c|c}
        0   & \widehat{F} \\
        \hline
        I_{n-1} & 0
        \end{array}
        \right) 
\]
Then the product $AB$ is 
\[
\left( 
    \begin{array}{c|c}
        \widehat{P}   & I_{n-1} \\
        \hline
        \widehat{s} & 0
    \end{array}
\right)
\left( 
    \begin{array}{c|c}
        0   & \widehat{F} \\
        \hline
        I_{n-1} & 0
    \end{array}
\right)
= 
\left( 
    \begin{array}{c|c}
        I_{n-1}  & \widehat{P}\widehat{F} \\
        \hline
        0 & I_{N-n+1}
    \end{array}
\right)
= I_N
\]
The product has determinant 1, so $\det(A) = \pm 1$, and it follows that $\delta$ is smooth.
\end{proof}

\begin{corollary}\label{cor-total space}
The total space, $\mu_\AA^{-1}(0)$, is well-poised.
\end{corollary}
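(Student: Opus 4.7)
The plan is to recognize the total space $\mu_\AA^{-1}(0)$ as exactly the type of variety considered in Proposition \ref{prop-disjoint sqfree monomial maps and linear ideals}, and then invoke that proposition directly.

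First, I would recall the setup: the moment map $\mu_\AA$ corresponds to the ring homomorphism $\varphi : \Cc[t_1,\dotsc,t_d] \to \Cc[x_1,\dotsc,x_d,y_1,\dotsc,y_d]$ defined by $t_i \mapsto x_i y_i$. Letting $I \subset \Cc[t_1,\dotsc,t_d]$ denote the vanishing ideal of the centralized linear subspace $L_{\mathcal{A}_0}$, we have $\mu_\AA^{-1}(0) = V(\varphi(I))$. The ideal $I$ is linear, being generated by the forms $\sum c_i t_i$ with $\sum c_i a_i = 0$.

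Next I would verify the hypotheses of Proposition \ref{prop-disjoint sqfree monomial maps and linear ideals}. Take $n = d$, choose the sequence $(i_1,\dotsc,i_n) = (2,\dotsc,2)$ so $N = 2d$, and relabel the target variables $y_{k,1} := x_k$, $y_{k,2} := y_k$. Then $\varphi$ coincides with the map $\psi$ of the proposition since $\psi(t_k) = y_{k,1}y_{k,2} = x_k y_k$. The remaining condition to check is that $I$ contains no monomials: since $I$ is linear, this amounts to checking that no coordinate variable $t_i$ lies in $I$, which would force $a_i = 0$. But the standing assumption in the definition of the hyperplane arrangement $\mathcal{A}(a,r)$ is that each $a_i$ is a genuine normal vector, i.e.\ $a_i \neq 0$ for all $i$; so $I$ contains no monomials.

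Applying Proposition \ref{prop-disjoint sqfree monomial maps and linear ideals} then yields that $V(\varphi(I)) = \mu_\AA^{-1}(0)$ is an arrangement variety whose given presentation is semi-canonical, and moreover this semi-canonical embedding is well-poised. This is exactly the assertion of Corollary \ref{cor-total space}.

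The argument is essentially a bookkeeping reduction; the only mildly subtle step is the non-vanishing of the $a_i$, which is where one uses the convention that the hypertoric data defines an honest hyperplane arrangement rather than a degenerate one. With Proposition \ref{prop-disjoint sqfree monomial maps and linear ideals} in hand (which is the genuinely substantive result, combining the semi-canonical machinery with Theorem \ref{general arrangement varieties are well-poised}), there is no further technical obstacle.
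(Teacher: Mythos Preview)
Your proposal is correct and follows essentially the same route as the paper: the paper's proof simply cites Proposition \ref{prop-disjoint sqfree monomial maps and linear ideals} (together with Theorem \ref{general arrangement varieties are well-poised}), relying on the discussion immediately preceding the proposition to identify $\mu_\AA^{-1}(0) = V(\varphi(I))$ as an instance of $V(\psi(I))$ with $(i_1,\dotsc,i_n)=(2,\dotsc,2)$. You have spelled out this identification and the monomial-free hypothesis explicitly, but the underlying argument is the same.
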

\begin{proof}
This follows by the Proposition \ref{prop-disjoint sqfree monomial maps and linear ideals} and Theorem \ref{genArrVar}.
\end{proof}

\begin{corollary}\label{cor-hypertoric}
Given a full rank hyperplane arrangement $\mathcal{A}$, the corresponding hypertoric variety $\mathfrak{M}_\mathcal{A}$ has a well-poised embedding.
\end{corollary}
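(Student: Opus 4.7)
The plan is to realize the corollary as a direct combination of Corollary~\ref{cor-total space} and Theorem~\ref{thm-quotients of wellpoised}. By Definition~\ref{hypertoric-def}, the hypertoric variety $\mathfrak{M}_\AA$ is the GIT quotient of the total space $\mu_\AA^{-1}(0)$ by the quasi-torus $G_\AA = \Spec \Cc[\Lambda^*]$ with respect to the character $\alpha = i^*(a)$. Corollary~\ref{cor-total space}, which rests on Proposition~\ref{prop-disjoint sqfree monomial maps and linear ideals} together with Theorem~\ref{general arrangement varieties are well-poised}, already provides a well-poised embedding of $\mu_\AA^{-1}(0)$ and identifies it with a semi-canonically embedded affine arrangement variety $X(\D)$ corresponding to a proper polyhedral divisor on $\Pp^{n-1}$. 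So once the ancillary hypotheses of Theorem~\ref{thm-quotients of wellpoised} are checked, nothing further is needed.

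Specifically, I would verify that $\mu_\AA^{-1}(0)$ equipped with the $G_\AA$-action satisfies the required conditions. Normality of the total space is automatic from its identification with $X(\D)$ for a proper polyhedral divisor (see Section~\ref{sec-TVarsPrelims}). The $G_\AA$-grading on the coordinate ring arises by pulling back the natural $\Zz^{2d}$-grading on $\Cc[x_1,\dots,x_d,y_1,\dots,y_d]$ along the surjection $\Zz^{2d}\twoheadrightarrow \Lambda^*$ dual to the inclusion $\nabla\circ i\colon \Lambda \hookrightarrow \Zz^{2d}$ in the diagram preceding Definition~\ref{hypertoric-def}, and each isotypic component is finite-dimensional because the ambient $G_\AA$-action on the polynomial ring already has weight spaces of finite dimension.

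With these verifications in hand, Theorem~\ref{thm-quotients of wellpoised} applies with $H = G_\AA$ and character $\alpha$, yielding the desired well-poised embedding of $\mathfrak{M}_\AA = \mu_\AA^{-1}(0)\sslash_\alpha G_\AA$. I expect the only substantive step to be the graded bookkeeping just described: pinning down which $\Lambda^*$-grading on the coordinate ring of the total space is the one induced by the semi-canonical embedding produced in Proposition~\ref{prop-disjoint sqfree monomial maps and linear ideals}, and confirming the finite-dimensionality of each isotypic piece. The well-poised property itself is already delivered by Corollary~\ref{cor-total space}, so no new tropical or Khovanskii-basis analysis is required.
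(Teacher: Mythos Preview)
Your overall strategy is precisely the paper's: its proof is the single sentence ``This follows by Theorem~\ref{thm-quotients of wellpoised} and Corollary~\ref{cor-total space}.'' However, the explicit verification you offer for the finite-dimensionality hypothesis of Theorem~\ref{thm-quotients of wellpoised} is incorrect. Since $\nabla(e_i)=e_i^+-e_i^-$, the dual map $(\Zz^{2d})^*\to\Lambda^*$ annihilates the $\Zz^{2d}$-degree of each product $x_iy_i$; hence every monomial $(x_1y_1)^{a_1}\cdots(x_dy_d)^{a_d}$ has $\Lambda^*$-degree $0$, and the $G_\AA$-weight spaces of the ambient polynomial ring---and of the coordinate ring of $\mu_\AA^{-1}(0)$---are infinite-dimensional. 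For the same reason the hypothesis $R_0=\Cc$ fails outright: the $G_\AA$-invariants of the total space constitute the coordinate ring of the affine hypertoric variety $\mu_\AA^{-1}(0)\sslash_0 G_\AA$, which is typically positive-dimensional. You do not address this hypothesis at all.

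The paper's one-line proof does not engage with these issues either, so you are not diverging from its intended argument. But the specific justification you propose (``the ambient $G_\AA$-action on the polynomial ring already has weight spaces of finite dimension'') is false as stated. If you want the argument to be airtight you would need to relax the hypotheses of Theorem~\ref{thm-quotients of wellpoised}: inspecting its proof, what is really used is that each Veronese subalgebra $S_\beta=\bigoplus_{n\ge 0}R_{n\beta}$ is finitely generated, which holds here because $G_\AA$ is reductive, not the stronger blanket finite-dimensionality of every graded piece or the condition $R_0=\Cc$.
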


\begin{proof}
This follows by Theorem \ref{thm-quotients of wellpoised} and Corollary \ref{cor-total space}.
\end{proof}

\begin{remark}
Recall that a tropicalization is \textit{faithful} if there is a continuous section $s : \Trop(X) \to X^\text{an}$ of the tropicalization map where $X^\text{an}$ is the Berkovich analytification.
In \cite[Theorem 6.3]{Kutler}, Kutler showed that hypertoric varieties are faithfully tropicalized by their Lawrence embeddings by analyzing the structure of $\Trop(\mathfrak{M}_\AA)$ and appealing to a result of Gubler, Rabinoff, and Werner \cite[Theorem 8.14]{Gubler-Rabinoff-Werner}. As remarked in the introduction, the fact that $\mathfrak{M}_\AA$ has a well-poised embedding provides an alternate proof that hypertoric varieties have faithful tropicalizations.
\end{remark}

\nocite{*}
\bibliographystyle{alpha}
\bibliography{main}

\end{document}